\numberwithin{equation}{section}
\newtheorem{theorem}{Theorem}[section]
\newtheorem{proposition}[theorem]{Proposition}
\newtheorem{definition}[theorem]{Definition}
\newtheorem{lemma}[theorem]{Lemma}
\newtheorem{example}[theorem]{Example}
\newtheorem{corollary}[theorem]{Corollary}
\newcommand{\edge}{\ar@{-}}
\newcommand{\pf}{\noindent\begin {proof}}
\newcommand{\epf}{\end{proof}}
\newcommand{\Hom}{\mbox{\rm Hom}}
\newcommand{\Ima}{\mbox{\rm Im}}
\def\Im{\mathop{\rm Im}\nolimits}
\def\Ker{\mathop{\rm Ker}\nolimits}
\def\Coker{\mathop{\rm Coker}\nolimits}
\def\La{\mathop{\rm \Lambda}\nolimits}
\def\findim{\mathop{\rm fin.dim}\nolimits}
\def\mod{\mathop{\rm mod}\nolimits}
\def\pd{\mathop{\rm pd}\nolimits}
\def\max{\mathop{\rm max}\nolimits}
\def\min{\mathop{\rm min}\nolimits}
\def\sup{\mathop{\rm sup}\nolimits}
\def\inf{\mathop{\rm inf}\nolimits}
\def\add{\mathop{\rm add}\nolimits}
\def\gldim{\mathop{\rm gl.dim}\nolimits}
\def\rgldim{\mathop{\rm r.gl.dim}\nolimits}
\def\findim{\mathop{\rm fin.dim}\nolimits}
\def\rad{\mathop{{\rm rad}}\nolimits}
\def\top{\mathop{{\rm top}}\nolimits}
\def\dim{\mathop{\rm dim}\nolimits}
\def\Hom{\mathop{\rm Hom}\nolimits}
\def\sup{\mathop{\rm sup}\nolimits}
\def\lim{\mathop{\underrightarrow{\rm lim}}\nolimits}
\def\gen{\mathop{\rm gen}\nolimits}
\def\rank{\mathop{\rm rank}\nolimits}
\def\End{\mathop{\rm End}\nolimits}
\def\findim{\mathop{\rm fin.dim}\nolimits}
\def\mod{\mathop{\rm mod}\nolimits}
\def\pd{\mathop{\rm pd}\nolimits}
\def\max{\mathop{\rm max}\nolimits}
\def\min{\mathop{\rm min}\nolimits}
\def\sup{\mathop{\rm sup}\nolimits}
\def\inf{\mathop{\rm inf}\nolimits}
\def\add{\mathop{\rm add}\nolimits}
\def\gldim{\mathop{\rm gl.dim}\nolimits}
\def\findim{\mathop{\rm fin.dim}\nolimits}
\def\rad{\mathop{{\rm rad}}\nolimits}
\def\top{\mathop{{\rm top}}\nolimits}
\def\dim{\mathop{\rm dim}\nolimits}
\def\Hom{\mathop{\rm Hom}\nolimits}
\def\sup{\mathop{\rm sup}\nolimits}
\def\lim{\mathop{\underrightarrow{\rm lim}}\nolimits}
\def\gen{\mathop{\rm gen}\nolimits}
\def\rank{\mathop{\rm rank}\nolimits}
\def\End{\mathop{\rm End}\nolimits}
\def\repdim{\mathop{\rm rep.dim}\nolimits}
\def\wresoldim{\mathop{\rm w.resol.dim}\nolimits}
\def\A{\mathop{\rm \mathcal{A}}\nolimits}
\def\B{\mathop{\rm \mathcal{B}}\nolimits}
\def\C{\mathop{\rm \mathcal{C}}\nolimits}
\def\F{\mathop{\rm \mathcal{F}}\nolimits}
\def\S{\mathop{\rm \mathcal{S}}\nolimits}
\def\T{\mathop{\rm \mathcal{T}}\nolimits}
\def\P{\mathop{\rm \mathcal{P}}\nolimits}
\def\LL{\mathop{\rm LL}\nolimits}
\def\size{\mathop{\rm {\textbf{size}}}\nolimits}
\def\rank{\mathop{\rm {\textbf{rank}}}\nolimits}
\def\A{\mathop{\rm \mathcal{A}}\nolimits}
\def\B{\mathop{\rm \mathcal{B}}\nolimits}
\def\C{\mathop{\rm \mathcal{C}}\nolimits}
\def\T{\mathop{\rm \mathcal{T}}\nolimits}
\def\P{\mathop{\rm \mathcal{P}}\nolimits}
\def\gen.dim{\mathop{\rm gen.dim}\nolimits}
\def\size{\mathop{\rm \textbf{size}}\nolimits}
\def\rank{\mathop{\rm {\textbf{rank}}}\nolimits}
\title{ \bf The Extension Dimension of Abelian Categories \thanks{2010 Mathematics Subject Classification: 18G20, 16E10, 18E10.}
\thanks{Keywords: Extension dimension, weak resolution dimension, homological invariants, radical layer length, ring extensions, recollements. }}
\author{Junling Zheng$^{a}$, Xin Ma$^{b}$, Zhaoyong Huang$^{a,\,}$\thanks{E-mail address: zjlshuxue@163.com,\, maxin0719@126.com,\, huangzy@nju.edu.cn} \\
{\it \footnotesize $^{a}$Department of Mathematics, Nanjing University, Nanjing 210093, Jiangsu Province, P.R. China;}\\
{\it \footnotesize $^{b}$College of Science, Henan University of Engineering, Zhengzhou 451191, Henan Province, P.R. China}}
\date{ }
\begin{document}

\baselineskip=16pt


\maketitle

\begin{abstract}
Let $\A$ be an abelian category having enough projective objects and enough injective objects.
We prove that if $\A$ admits an additive generating object, then the extension dimension and the weak resolution dimension
of $\A$ are identical, and they are at most the representation dimension of $\A$ minus two. By using it,
for a right Morita ring $\La$, we establish the relation between the extension dimension
of the category $\mod \La$ of finitely generated right $\Lambda$-modules and the representation dimension as well as the
right global dimension of $\Lambda$. In particular, we give an upper bound for the extension dimension of $\mod \Lambda$
in terms of the projective dimension of certain class of simple right $\Lambda$-modules and the radical layer length of $\Lambda$.
In addition, we investigate the behavior of the extension dimension under some ring extensions and recollements.
\end{abstract}

\pagestyle{myheadings}
\markboth{\rightline {\scriptsize  J. L. Zheng, X. Ma, Z. Y. Huang\emph{}}}
         {\leftline{\scriptsize  The Extension Dimension of Abelian Categories }}


\section{Introduction} 

Following the work of Bondal and Van den Bergh \cite{BB03},
Rouquier introduced in \cite{RR08D} the dimension of triangulated categories, which is an invariant that measures
how quickly the category can be built from one object. This dimension plays
an important role in representation theory. For example, it can be used to compute the representation dimension of
artin algebras (\cite{RR06R, OS09L}). Let $\Lambda$ be an artin algebra and $\mod \Lambda$ the category of finitely
generated right $\Lambda$-modules.
Rouquier proved that the dimension of the bounded derived category
of $\mod \Lambda$ is at most $ \LL(\Lambda)-1$, where
$\LL(\Lambda)$ is the Loewy length of $\Lambda$, and this dimension
is at most the global dimension $\gldim\Lambda$ of $\Lambda$ if $\Lambda$ is a finite dimensional algebra over a perfect field
(\cite[Proposition 7.37 and Remark 7.26]{RR08D}).

As an analogue of the dimension of triangulated categories, the (extension) dimension $\dim\A$ of an abelian category $\A$
was introduced by Beligiannis in \cite{BA08S}, also see \cite{DHLTR14T}. Let $\Lambda$ be an artin algebra. Note that
the representation dimension of $\Lambda$ is at most two (that is, $\Lambda$ is of finite representation type) if and only if
$\dim\mod \Lambda=0$ (\cite{BA08S}). So, like the representation dimension of $\Lambda$, the extension dimension
$\dim\mod \Lambda$ is also an invariant that measures how far $\Lambda$ is from having finite representation type.
It was proved in \cite{BA08S} that $\dim\mod \Lambda\leq\LL(\Lambda)-1$, which is a semi-counterpart of the above result of Rouquier.
On the other hand, Iyama introduced in \cite{Iya03R} the weak resolution dimension of $\Lambda$ (see also \cite{OS09L}).
It is easy to see that the weak resolution dimension of $\Lambda$ is at most the representation dimension of $\Lambda$
minus two. Based on these works, in this paper we will study further properties of the extension dimension of abelian
categories, especially module categories. The paper is organized as follows.

In Section 2, we give some terminology and some preliminary results.

In Section 3, we investigate the relationship between the extension dimension and some other homological invariants.
Let $\A$ be an abelian category having enough projective objects and enough injective objects. We prove that
if $\A$ admits an additive generating object, then $\dim\A$ and the weak resolution dimension of $\A$ are identical,
and they are at most the representation dimension of $\A$ minus two.
For a ring $\Lambda$, we use $\rgldim\Lambda$ to denote the right global dimension of $\Lambda$.
As applications, we get that for a right Morita ring $\Lambda$, $\dim\mod\Lambda\leq\rgldim\Lambda$
(which is the other semi-counterpart of the result of Rouquier) and $\dim\mod\Lambda$ is at most the representation dimension of $\La$
minus two; and we also get that $\dim\mod \Lambda=n-1$ for the exterior algebra
$\Lambda$ of $k^n$, where $k$ is a field. In addition, we establish the relation between $\dim\mod \Lambda$ and
the finitistic dimension of $\Lambda$. Finally, we give an upper bound for $\dim\mod \Lambda$
in terms of the projective dimension of certain class of simple right $\Lambda$-modules and
the radical layer length of $\Lambda$, such that both $\gldim\Lambda$ and $\LL(\Lambda)-1$ are properly special cases of this upper bound.

In Section 4, we study the behavior of the extension dimension under ring extensions. Let $\Gamma\supseteq \Lambda$ be artin algebras.
We prove that $\dim \mod \Lambda=\dim \mod \Gamma$ if $\Gamma \geq \Lambda$ is an excellent extension, and that $\dim \mod \Lambda\leq
\dim \mod \Gamma+2$ if $\Gamma \geq \Lambda$ is a left idealized extension. We also prove that if $\Lambda$ and $\Gamma$ are
separably equivalent artin algebras, then $\dim \mod \Lambda=\dim \mod \Gamma$.

Let $\mathcal{A},\mathcal{B},\mathcal{C}$ be abelian categories and
$$\xymatrix{\mathcal{A}\ar[rr]!R|{i_{*}}&&\ar@<-2ex>[ll]!R|{i^{*}}\ar@<2ex>[ll]!R|{i^{!}}\mathcal{B}
\ar[rr]!L|{j^{*}}&&\ar@<-2ex>[ll]!L|{j_{!}}\ar@<2ex>[ll]!L|{j_{*}}\mathcal{C}}$$
a recollement. In Section 5, we prove that if either $i^{!}$ or $i^{*}$ is exact, then
$\max\{\dim \mathcal{A},\dim \mathcal{C}\} \leq \dim \mathcal{B}\leq \dim \mathcal{A} +\dim \mathcal{C}+1$.

\section{Preliminaries}

Let $\mathcal{A}$ be an abelian category.
The designation \emph{subcategory} will be used for full and additive subcategories of
$\mathcal{A}$ which are closed under isomorphisms and the word \emph{functor}
will mean an additive functor between additive categories.
For a subclass $\mathcal{U}$ of $\mathcal{A}$, we use $\add \mathcal{U}$ to
denote the subcategory of $\mathcal{A}$ consisting of
direct summands of finite direct sums of objects in $\mathcal{U}$.

Let $\mathcal{U}_1,\mathcal{U}_2,\cdots,\mathcal{U}_n$ be subcategories of $\mathcal{A}$. Define
$$\mathcal{U}_1\diamond \mathcal{U}_2:={\add}\{A\in \mathcal{A}\mid {\rm there \;exists \;an\; sequence \;}
0\rightarrow U_1\rightarrow  A \rightarrow U_2\rightarrow 0\ {\rm in}\ \mathcal{A}\ {\rm with}\; U_1 \in \mathcal{U}_1 \;{\rm and}\;
U_2 \in \mathcal{U}_2\}.$$
By \cite[Proposition 2.2]{DHLTR14T}, the operator $\diamond$ is associative, that is,
$(\mathcal{U}_{1}\diamond\mathcal{U}_{2})\diamond\mathcal{U}_{3} =\mathcal{U}_{1}\diamond(\mathcal{U}_{2}\diamond\mathcal{U}_{3}).$
The category $\mathcal{U}_{1}\diamond  \mathcal{U}_{2}\diamond \dots \diamond\mathcal{U}_{n}$
can be inductively described as follows
\begin{align*}
\mathcal{U}_{1}\diamond  \mathcal{U}_{2}\diamond \dots \diamond\mathcal{U}_{n}:=
\add \{A\in \mathcal{A}\mid {\rm there \;exists \;an\; sequence}\
0\rightarrow U\rightarrow  A \rightarrow V\rightarrow 0  \\{\rm in}\ \mathcal{A}\ {\rm with}\; U \in \mathcal{U}_{1} \;{\rm and}\;
V \in  \mathcal{U}_{2}\diamond \dots \diamond\mathcal{U}_{n}\}.
\end{align*}
For a subclass $\mathcal{U}$ of $\mathcal{A}$, set
$\langle\mathcal{U}\rangle_{0}=0$, $\langle\mathcal{U}\rangle_{1}=\add\mathcal{U}$,
$\langle\mathcal{U}\rangle_{n}=\langle\mathcal{U}\rangle_1\diamond \langle\mathcal{U}\rangle_{n-1}$ for any $n\geq 2$,
and $\langle\mathcal{U}\rangle_{\infty}=\mathop{\bigcup}_{n\geq 0}\langle\mathcal{U}\rangle_{n}$ (\cite{BA08S}).
Note that $\langle\mathcal{U}\rangle_{n}=\langle\langle\mathcal{U}\rangle_{1}\rangle_{n}$.
If $T$ is an object in $\mathcal{A}$ we write $\langle T\rangle_{n}$ instead of $\langle \{T \}\rangle_{n}$.

Throughout this paper, by convention, it is assumed that $\inf \emptyset=+\infty$ and $\sup \emptyset =-\infty.$

\begin{definition}\label{def-2.1}
{\rm (\cite{DHLTR14T})
For any subcategory $\mathcal{X}$ of $\mathcal{A}$, define
$$\size_{\mathcal{A}}\mathcal{X}:=\inf\{n\geq 0\mid\mathcal{X}\subseteq\langle T\rangle_{n+1}\ {\rm with}\ T\in\mathcal{A}\},$$
$$\rank_{\mathcal{A}}\mathcal{X}:=\inf\{n\geq 0\mid\mathcal{X}=\langle T\rangle_{n+1}\ {\rm with}\ T\in\mathcal{A}\}.$$
The {\bf extension dimension} $\dim \mathcal{A}$ of $\mathcal{A}$ is defined to be $\dim \mathcal{A}:=\rank_{\mathcal{A}}\mathcal{A}$.}
\end{definition}

It is easy to see that $\dim \mathcal{A}=\rank_{\mathcal{A}}\mathcal{A}=\size_{\mathcal{A}}\mathcal{A}$.
We also have the following easy and useful observations.

\begin{proposition}\label{prop-2.2}
Let $\mathcal{U}_{1}$ and $\mathcal{U}_{2}$ be subcategories of $\mathcal{A}$ with $\mathcal{U}_{1}\subseteq\mathcal{U}_{2}$. Then we have
\begin{itemize}
\item[(1)] If $\mathcal{V}_{1}$ and $\mathcal{V}_{2}$ are subcategories of $\mathcal{A}$ with $\mathcal{V}_{1}\subseteq\mathcal{V}_{2}$, then
$\mathcal{U}_{1}\diamond\mathcal{V}_{1}\subseteq\mathcal{U}_{2}\diamond\mathcal{V}_{2}$;
\item[(2)] $\langle\mathcal{U}_{1}\rangle_{n}\subseteq\langle\mathcal{U}_{2}\rangle_{n}$ for any $n\geq 1$;
\item[(3)] $\langle\mathcal{U}_{1}\rangle_{n}\subseteq\langle\mathcal{U}_{1}\rangle_{n+1}$ for any $n\geq 1$;
\item[(4)] ${\bf size}_{\mathcal{A}}\mathcal{U}_{1} \leq {\bf size}_{\mathcal{A}}\mathcal{U}_{2}$.
\end{itemize}
\end{proposition}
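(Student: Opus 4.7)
The plan is to prove the four parts in order, noting that each one is essentially an unpacking of the definitions of $\diamond$ and $\langle-\rangle_n$ together with the monotonicity of $\add$. The statements build on one another: (1) feeds directly into the inductive step for (2), and (4) drops out of the definition of $\size$ once the inclusion $\mathcal{U}_{1}\subseteq\mathcal{U}_{2}$ is in hand.

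For (1), I would take a generator $A$ of $\mathcal{U}_{1}\diamond\mathcal{V}_{1}$, that is, an object of $\mathcal{A}$ fitting in a short exact sequence $0\to U\to A\to V\to 0$ with $U\in\mathcal{U}_{1}$ and $V\in\mathcal{V}_{1}$. Because $\mathcal{U}_{1}\subseteq\mathcal{U}_{2}$ and $\mathcal{V}_{1}\subseteq\mathcal{V}_{2}$, the very same sequence exhibits $A$ as one of the generators defining $\mathcal{U}_{2}\diamond\mathcal{V}_{2}$; passing to $\add$ on both sides (which is order-preserving) yields the inclusion. For (2), I would induct on $n$. The base case $n=1$ reads $\add\mathcal{U}_{1}\subseteq\add\mathcal{U}_{2}$, which is immediate. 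For $n\geq 2$, the recursive formula $\langle\mathcal{U}_{i}\rangle_{n}=\langle\mathcal{U}_{i}\rangle_{1}\diamond\langle\mathcal{U}_{i}\rangle_{n-1}$, combined with the inductive hypothesis and (1) applied to $\mathcal{U}_{1}\subseteq\mathcal{U}_{2}$ and $\langle\mathcal{U}_{1}\rangle_{n-1}\subseteq\langle\mathcal{U}_{2}\rangle_{n-1}$, gives $\langle\mathcal{U}_{1}\rangle_{n}\subseteq\langle\mathcal{U}_{2}\rangle_{n}$.

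For (3), given $A\in\langle\mathcal{U}_{1}\rangle_{n}$, the split short exact sequence $0\to 0\to A\to A\to 0$ shows that $A$ lies in $\langle\mathcal{U}_{1}\rangle_{1}\diamond\langle\mathcal{U}_{1}\rangle_{n}=\langle\mathcal{U}_{1}\rangle_{n+1}$, using that the zero object always belongs to the additive subcategory $\add\mathcal{U}_{1}=\langle\mathcal{U}_{1}\rangle_{1}$. For (4), if $\size_{\mathcal{A}}\mathcal{U}_{2}=+\infty$ the inequality is vacuous; otherwise, any $n\geq 0$ and $T\in\mathcal{A}$ with $\mathcal{U}_{2}\subseteq\langle T\rangle_{n+1}$ also witness $\mathcal{U}_{1}\subseteq\langle T\rangle_{n+1}$, and taking the infimum over such $n$ yields $\size_{\mathcal{A}}\mathcal{U}_{1}\leq\size_{\mathcal{A}}\mathcal{U}_{2}$. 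There is no genuine obstacle in this proposition; the only point of care is the standing convention that subcategories are additive, which makes the zero object available in the argument for (3).
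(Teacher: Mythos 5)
The paper omits the proof of Proposition \ref{prop-2.2}, presenting it as a collection of ``easy and useful observations,'' so there is no proof to compare against. Your argument is correct and is precisely the routine unpacking the authors surely had in mind: (1) follows because the same short exact sequence exhibiting a generator of $\mathcal{U}_1\diamond\mathcal{V}_1$ also exhibits it as a generator of $\mathcal{U}_2\diamond\mathcal{V}_2$; (2) then follows by induction via the recursion $\langle\mathcal{U}\rangle_n=\langle\mathcal{U}\rangle_1\diamond\langle\mathcal{U}\rangle_{n-1}$; (3) uses $0\to 0\to A\to A\to 0$ together with $0\in\add\mathcal{U}_1=\langle\mathcal{U}_1\rangle_1$; and (4) is immediate from the definition of $\size$, since any witness $\mathcal{U}_2\subseteq\langle T\rangle_{n+1}$ also witnesses $\mathcal{U}_1\subseteq\langle T\rangle_{n+1}$.
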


For two subcategories $\mathcal{U},\mathcal{V}$ of $\mathcal{A}$,
we set $\mathcal{U}\oplus \mathcal{V}:=\{U \oplus V\mid U\in\mathcal{U}\ {\rm and}\ V\in\mathcal{V} \}$.
Note that if $\mathcal{U}$ is closed under finite direct sums, then $\mathcal{U}\oplus \mathcal{U}= \mathcal{U}$.

\begin{corollary}\label{cor-2.3}
For any $T_{1},T_{2}\in \mathcal{A}$ and $m,n\geq 1$, we have
\begin{itemize}
\item[(1)] $\langle T_{1}\rangle_{m}\diamond \langle T_{2}\rangle_{n}\subseteq \langle T_{1}\oplus T_{2}\rangle_{m+n}$;
\item[(2)] $\langle T_{1}\rangle_{m}\oplus \langle T_{2}\rangle_{n}\subseteq \langle T_{1}\oplus T_{2}\rangle_{\max\{m,n\}}$.
\end{itemize}
\end{corollary}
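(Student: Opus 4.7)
The plan is to derive both inclusions by mechanically combining Proposition~\ref{prop-2.2} with the associativity of $\diamond$ already recalled before the corollary; neither part requires the construction of new short exact sequences.

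For~(1) I would proceed by induction on $m$. For the base case $m=1$, by Proposition~\ref{prop-2.2}(2) applied to the inclusions $\{T_{1}\}\subseteq\add\{T_{1}\oplus T_{2}\}$ and $\{T_{2}\}\subseteq\add\{T_{1}\oplus T_{2}\}$, we have $\langle T_{1}\rangle_{1}\subseteq\langle T_{1}\oplus T_{2}\rangle_{1}$ and $\langle T_{2}\rangle_{n}\subseteq\langle T_{1}\oplus T_{2}\rangle_{n}$; then Proposition~\ref{prop-2.2}(1) gives
$$\langle T_{1}\rangle_{1}\diamond\langle T_{2}\rangle_{n}\subseteq\langle T_{1}\oplus T_{2}\rangle_{1}\diamond\langle T_{1}\oplus T_{2}\rangle_{n}=\langle T_{1}\oplus T_{2}\rangle_{n+1},$$
the last equality being the recursive definition of $\langle\,\cdot\,\rangle_{n+1}$. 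For the inductive step, assume the claim holds for $m-1$. Using $\langle T_{1}\rangle_{m}=\langle T_{1}\rangle_{1}\diamond\langle T_{1}\rangle_{m-1}$ and the associativity of $\diamond$, write
$$\langle T_{1}\rangle_{m}\diamond\langle T_{2}\rangle_{n}=\langle T_{1}\rangle_{1}\diamond\bigl(\langle T_{1}\rangle_{m-1}\diamond\langle T_{2}\rangle_{n}\bigr).$$
The inductive hypothesis bounds the inner factor by $\langle T_{1}\oplus T_{2}\rangle_{m+n-1}$, and then Proposition~\ref{prop-2.2}(1) together with the base case turns the whole expression into $\langle T_{1}\oplus T_{2}\rangle_{1}\diamond\langle T_{1}\oplus T_{2}\rangle_{m+n-1}=\langle T_{1}\oplus T_{2}\rangle_{m+n}$, as desired.

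For~(2), the key observation is that each $\langle\mathcal{U}\rangle_{k}$ is closed under finite direct sums, since it is built via $\add$. Set $k=\max\{m,n\}$. By Proposition~\ref{prop-2.2}(2), $\langle T_{1}\rangle_{m}\subseteq\langle T_{1}\oplus T_{2}\rangle_{m}$ and $\langle T_{2}\rangle_{n}\subseteq\langle T_{1}\oplus T_{2}\rangle_{n}$, and by Proposition~\ref{prop-2.2}(3) both of these are contained in $\langle T_{1}\oplus T_{2}\rangle_{k}$. Since the latter is closed under finite direct sums, any $U\oplus V$ with $U\in\langle T_{1}\rangle_{m}$ and $V\in\langle T_{2}\rangle_{n}$ lies in $\langle T_{1}\oplus T_{2}\rangle_{k}$, which yields the desired inclusion.

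I do not anticipate a real obstacle: the hard conceptual work (the associativity of $\diamond$ and the monotonicity properties in Proposition~\ref{prop-2.2}) has already been performed, so both statements reduce to bookkeeping. The only point to be careful about is invoking closure of $\langle\,\cdot\,\rangle_{k}$ under finite direct sums in part~(2), which follows directly from the $\add$ appearing in the definition.
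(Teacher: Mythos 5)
Your proof is correct and follows essentially the same route as the paper's. For part (1), the paper invokes the identity $\langle T_{1}\oplus T_{2}\rangle_{m}\diamond\langle T_{1}\oplus T_{2}\rangle_{n}=\langle T_{1}\oplus T_{2}\rangle_{m+n}$ directly (a consequence of the associativity of $\diamond$ and the recursive definition of $\langle\cdot\rangle_{k}$), whereas you re-derive it by an explicit induction on $m$; this is merely a more verbose justification of the same step, not a different strategy. Part (2) is argued exactly as in the paper: pass to $\langle T_{1}\oplus T_{2}\rangle_{m}$ and $\langle T_{1}\oplus T_{2}\rangle_{n}$ via Proposition~\ref{prop-2.2}(2), enlarge both to $\langle T_{1}\oplus T_{2}\rangle_{\max\{m,n\}}$ via Proposition~\ref{prop-2.2}(3), and finish using that $\langle\cdot\rangle_{k}$, being defined with $\add$, is closed under finite direct sums.
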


\begin{proof}
Since $\langle T_{1}\rangle_{1} \subseteq \langle T_{1}\oplus T_{2}\rangle_{1}$, we have
$\langle T_{1}\rangle_{m}\subseteq \langle T_{1}\oplus T_{2}\rangle_{m}$ by Proposition \ref{prop-2.2}(2).
Similarly, $\langle T_{2}\rangle_{n}\subseteq \langle T_{1}\oplus T_{2}\rangle_{n}$.
Thus we have

(1) $\langle T_{1}\rangle_{m}\diamond \langle T_{2}\rangle_{n}\subseteq
\langle T_{1}\oplus T_{2}\rangle_{m}\diamond \langle T_{1}\oplus T_{2}\rangle_{n}
=\langle T_{1}\oplus T_{2}\rangle_{m+n}$.

(2) $\langle T_{1}\rangle_{m}\oplus \langle T_{2}\rangle_{n}\subseteq
\langle T_{1}\oplus T_{2}\rangle_{m}\oplus \langle T_{1}\oplus T_{2}\rangle_{n}
=\langle T_{1}\oplus T_{2}\rangle_{\max\{m,n\}}$ by Proposition \ref{prop-2.2}(3).
\end{proof}


We need the following fact.

\begin{lemma}\label{lem-2.4}
Let $F:\mathcal{A}\rightarrow \mathcal{B}$ be an exact functor of abelian categories.
Then $F(\langle T\rangle_{n})\subseteq\langle F(T)\rangle_{n}$ for any $T\in \mathcal{A}$ and $n\geq 1$.
\end{lemma}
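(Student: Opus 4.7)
The plan is to prove the statement by induction on $n$, exploiting the inductive definition $\langle T\rangle_n = \langle T\rangle_1 \diamond \langle T\rangle_{n-1}$ together with the two defining properties of an exact functor: additivity (which controls $\add$) and preservation of short exact sequences (which controls $\diamond$).

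For the base case $n=1$, I would note that $\langle T\rangle_1 = \add T$, so any $X \in \langle T\rangle_1$ is a direct summand of $T^{\oplus k}$ for some $k \geq 0$. Since $F$ is additive, $F(X)$ is a direct summand of $F(T)^{\oplus k}$, hence lies in $\add F(T) = \langle F(T)\rangle_1$.

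For the inductive step, assuming the result for $n-1$, I would take $X \in \langle T\rangle_n = \langle T\rangle_1 \diamond \langle T\rangle_{n-1}$. Unwinding the definition of $\diamond$, the object $X$ is a direct summand of some $Y \in \mathcal{A}$ fitting in a short exact sequence
$$0 \longrightarrow U \longrightarrow Y \longrightarrow V \longrightarrow 0$$
with $U \in \langle T\rangle_1$ and $V \in \langle T\rangle_{n-1}$. Applying $F$ and using exactness, I get a short exact sequence
$$0 \longrightarrow F(U) \longrightarrow F(Y) \longrightarrow F(V) \longrightarrow 0$$
in $\mathcal{B}$. By the base case $F(U) \in \langle F(T)\rangle_1$, and by the induction hypothesis $F(V) \in \langle F(T)\rangle_{n-1}$. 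Therefore $F(Y) \in \langle F(T)\rangle_1 \diamond \langle F(T)\rangle_{n-1} = \langle F(T)\rangle_n$, and since $\langle F(T)\rangle_n$ is closed under direct summands (being of the form $\add\{\cdots\}$), it follows that $F(X) \in \langle F(T)\rangle_n$, completing the induction.

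There is no real obstacle here: the argument is a direct structural induction, and the only points requiring mild care are remembering that $\langle T\rangle_n$ is by definition closed under direct summands (so passing from the representative $Y$ to the actual element $X$ is automatic once $F(Y)$ has been placed in $\langle F(T)\rangle_n$), and that exactness of $F$ is used in the strong form preserving short exact sequences, not merely kernels or cokernels.
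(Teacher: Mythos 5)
Your proof is correct and follows essentially the same route as the paper's: induction on $n$, with additivity handling the base case $\add T$ and exactness carrying the short exact sequence through for the inductive step, then invoking closure of $\langle F(T)\rangle_n$ under direct summands. The only cosmetic difference is that the paper writes the direct-summand witness as $Y\oplus Y'$ inside the short exact sequence rather than first choosing a representative $Y$ containing $X$ as a summand, but these are the same argument.
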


\begin{proof}
We proceed by induction on $n$.
Let $X\in F(\langle T\rangle_{1})$. Then $X=F(Y)$ for some $Y\in\langle T\rangle_{1}(=\add T)$. Since
$Y\oplus Z\cong T^{l}$ for some $Z\in \mathcal{A}$ and $l\geq 1$, we have
$$X\oplus F(Z)=F(Y)\oplus F(Z)\cong F(Y\oplus Z)\cong F(T^{l})\cong F(T)^{l}.$$
So $X\in\langle F(T)\rangle_{1}$ and $F(\langle T\rangle_{1})\subseteq \langle F(T)\rangle_{1}$.
The case for $n=1$ is proved.

Now let $X\in F(\langle T\rangle_{n})$ with $n\geq 2$. Then $X=F(Y)$ for some $Y\in \langle T\rangle_{n}$ and
there exists an exact sequence
$$0\longrightarrow Y_{1} \longrightarrow Y\oplus Y'\longrightarrow Y_{2}\longrightarrow 0$$
in $\mathcal{A}$ with $Y_{1}\in \langle T\rangle_{1}$, $Y_{2}\in \langle T\rangle_{n-1}$ and $Y'\in\langle T\rangle_{n}$.
Since $F$ is exact, we get the following exact sequence
$$0\longrightarrow F(Y_{1}) \longrightarrow F(Y)\oplus F(Y')\longrightarrow F(Y_{2})\longrightarrow 0.$$
By the induction hypothesis, $F(Y_{1})\in F(\langle T\rangle_{1})\subseteq\langle F(T)\rangle_{1}$ and
$F(Y_{2})\in F(\langle T\rangle_{n-1})\subseteq\langle F(T)\rangle_{n-1}$. It follows that
$$X=F(Y)\in\langle F(Y_{1})\rangle_{1}\diamond\langle F(Y_{2})\rangle_{1}\subseteq
\langle F(T)\rangle_{1}\diamond\langle F(T)\rangle_{n-1}=\langle F(T)\rangle_{n}$$
and  $F(\langle T\rangle_{n})\subseteq\langle F(T)\rangle_{n}$.
\end{proof}

\section{Relations with some homological invariants}

In this section, $\mathcal{A}$ is an abelian category.

%
%
%

\begin{definition}{\rm (cf. \cite{Iya03R, OS09L})}\label{def-3.1}
{\rm Let $M\in\mathcal{A}$.
The {\bf weak $M$-resolution dimension} of an object $X$ in $\mathcal{A}$, denoted by $M$-$\wresoldim X$,
is defined as $\inf\{i\geq 0\mid$ there exists an exact sequence
$$0 \longrightarrow M_{i} \longrightarrow M_{i-1}\longrightarrow\cdots \longrightarrow M_{0}\longrightarrow X\longrightarrow 0$$
in $\mathcal{A}$ with all $M_{j}$ in $\add M\}$.
The {\bf weak $M$-resolution dimension} of $\mathcal{A}$, $M$-$\wresoldim \mathcal{A}$, is defined as
$\sup\{M$-$\wresoldim X \mid X\in \mathcal{A}\}$.
The {\bf weak resolution dimension} of $\mathcal{A}$ is denoted by  $\wresoldim \mathcal{A}$
and defined as $\inf\{M$-$\wresoldim \mathcal{A}\mid M\in \mathcal{A}\}$.}
\end{definition}


Let $X\in\A$. Suppose there exists a monomorphism $f: X\longrightarrow E$ in $\A$ such that $E$
is an injective object in $\A$. Then we write $\Omega^{-1}(X)=:\Coker f$ if $f$ is right minimal, i.e. if
$f$ is an injective envelope of $X$. Dually,
if $g: P\longrightarrow X$ is a right minimal epimorphism in $\A$ such that $P$
is a projective object in $\A$, then we write $\Omega^{1}(X)=:\Ker f$.
Additionally, define $\Omega^{0}$ as the identity functor in $\A$.
Inductively, for any $n\geq 2$,
we write $\Omega^{n}(X):=\Omega^{1}(\Omega^{n-1}(X))$ and $\Omega^{-n}(X):=\Omega^{-1}(\Omega^{-(n-1)}(X))$.

\begin{lemma} {\rm (\cite[Lemma 3.3]{XuD16I})}\label{lem-3.2}
If $\A$ has enough projective objects and enough injective objects, then
for any exact sequence
$$0\longrightarrow X_{1} \longrightarrow  X_{2} \longrightarrow  X_{3} \longrightarrow 0$$
in $\A$, we have the following exact sequences
$$0\longrightarrow \Omega^{1}(X_{3})\longrightarrow X_{1}\oplus P \longrightarrow X_{2}\longrightarrow 0,$$
$$0 \longrightarrow X_{2}\longrightarrow E\oplus X_{3}\longrightarrow \Omega^{-1}(X_{1})\longrightarrow 0,$$
where $P$ is projective and $E$ is injective in $\mathcal{A}$.
\end{lemma}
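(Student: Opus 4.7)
The plan is to derive the two short exact sequences via standard pullback and pushout constructions, exploiting the projectivity of $P$ and the injectivity of $E$ to split an auxiliary sequence in each case.

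For the first sequence, I would take a right minimal epimorphism $\pi\colon P \to X_3$ with $P$ projective, so that by definition we have an exact sequence $0 \to \Omega^1(X_3) \to P \xrightarrow{\pi} X_3 \to 0$. Form the pullback $Y$ of the epimorphism $X_2 \to X_3$ along $\pi$. The usual pullback diagram then produces two short exact sequences simultaneously: $0 \to \Omega^1(X_3) \to Y \to X_2 \to 0$ (obtained by pulling the leftmost column back along $X_2\to X_3$) and $0 \to X_1 \to Y \to P \to 0$ (obtained by pulling the original sequence back along $\pi$). Since $P$ is projective, the second sequence splits, giving $Y \cong X_1 \oplus P$, and substituting into the first yields the stated exact sequence.

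The second exact sequence is obtained dually. I would choose an injective envelope $\iota\colon X_1 \to E$, so that $0 \to X_1 \xrightarrow{\iota} E \to \Omega^{-1}(X_1) \to 0$ is exact, and form the pushout $Z$ of the monomorphism $X_1 \to X_2$ along $\iota$. The pushout diagram yields both $0 \to X_2 \to Z \to \Omega^{-1}(X_1) \to 0$ and $0 \to E \to Z \to X_3 \to 0$. Injectivity of $E$ splits the latter, giving $Z \cong E \oplus X_3$, and substitution produces the desired sequence.

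The argument is essentially bookkeeping; the only point requiring care is the standard fact that in a pullback square along an epimorphism, the pulled-back map is again an epimorphism with the same kernel, and dually for pushouts along monomorphisms. Once these identifications are verified by a routine diagram chase, the projectivity of $P$ and injectivity of $E$ make the splittings automatic, so I do not anticipate any serious obstacle.
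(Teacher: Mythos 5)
Your proposal is correct and follows the standard pullback/pushout argument, which is essentially how this result is proved in the cited source (Xu, Comm. Algebra 2016, Lemma 3.3): pull back the epimorphism $X_2\to X_3$ along a minimal projective epimorphism $P\to X_3$, observe that the resulting sequence $0\to X_1\to Y\to P\to 0$ splits, and substitute $Y\cong X_1\oplus P$ into the parallel sequence $0\to\Omega^1(X_3)\to Y\to X_2\to 0$; dually for the pushout along an injective envelope of $X_1$. The bookkeeping facts you flag (pullback of an epi is an epi with isomorphic kernel, and the dual) are exactly the ones needed and hold in any abelian category, so there is no gap.
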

%
%

Using Lemma \ref{lem-3.2}, we get the following lemma, which is a dual of \cite[Lemma 5.8]{DHLTR14T}.

\begin{lemma}\label{lem-3.3}
If $\A$ has enough injective objects and
$$0\longrightarrow M_{n}\longrightarrow
\cdots \longrightarrow  M_{1} \longrightarrow  M_{0} \longrightarrow X \longrightarrow0,$$
is an exact sequence in $\A$ with $n\geq 0$, then
$$X\in  \langle M_{0} \rangle_{1}\diamond\langle \Omega^{-1}(M_{1})
 \rangle_{1}\diamond\cdots\diamond\langle \Omega^{-n}(M_{n}) \rangle_{1}
 \subseteq\langle \oplus_{i=0}^{n}\Omega^{-i}(M_{i}) \rangle_{n+1}.$$
\end{lemma}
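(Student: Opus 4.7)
The plan is to induct on $n$. The case $n=0$ is immediate, since $0\to M_0\to X\to 0$ forces $X\cong M_0\in\langle M_0\rangle_1$. For $n\geq 1$, set $K:=\Ima(M_1\to M_0)$ and split the long exact sequence into
$$0\longrightarrow K\longrightarrow M_0\longrightarrow X\longrightarrow 0 \qquad (\ast)$$
together with
$$0\longrightarrow M_n\longrightarrow\cdots\longrightarrow M_1\longrightarrow K\longrightarrow 0. \qquad (\ast\ast)$$
Applying Lemma~\ref{lem-3.2} to $(\ast)$ produces an injective object $E$ and an exact sequence
$$0\longrightarrow M_0\longrightarrow E\oplus X\longrightarrow \Omega^{-1}(K)\longrightarrow 0,$$
so $E\oplus X$, and hence its summand $X$, lies in $\langle M_0\rangle_1\diamond\langle\Omega^{-1}(K)\rangle_1$; closure under summands holds because $\diamond$ is built from $\add$. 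Thus the task reduces to bounding $\Omega^{-1}(K)$.

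The inductive hypothesis applied to $(\ast\ast)$, which has length $n-1$ after re-indexing, yields
$$K\in\langle M_1\rangle_1\diamond\langle\Omega^{-1}(M_2)\rangle_1\diamond\cdots\diamond\langle\Omega^{-(n-1)}(M_n)\rangle_1.$$
I then need to shift this expression by one cosyzygy, establishing
$$\Omega^{-1}(K)\in\langle\Omega^{-1}(M_1)\rangle_1\diamond\langle\Omega^{-2}(M_2)\rangle_1\diamond\cdots\diamond\langle\Omega^{-n}(M_n)\rangle_1. \qquad (\dagger)$$
For this I will prove an auxiliary two-term fact: given any short exact sequence $0\to A\to B\to C\to 0$ in $\A$, the horseshoe construction using the (non-minimal) injective co-envelope $E(A)\oplus E(C)$ of $B$ yields an exact sequence
$$0\longrightarrow \Omega^{-1}(A)\longrightarrow W\longrightarrow \Omega^{-1}(C)\longrightarrow 0,$$
where $W$ is a cosyzygy of $B$ differing from $\Omega^{-1}(B)$ only by an injective direct summand. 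Since $\langle\Omega^{-1}(A)\rangle_1\diamond\langle\Omega^{-1}(C)\rangle_1$ is closed under summands, it contains $\Omega^{-1}(B)$. Iterating this fact along the associative diamond decomposition of $K$ gives $(\dagger)$, and combining with the first paragraph via Proposition~\ref{prop-2.2}(1) and associativity of $\diamond$ yields the first inclusion asserted in the lemma; the final inclusion $\subseteq\langle\bigoplus_{i=0}^{n}\Omega^{-i}(M_i)\rangle_{n+1}$ is a formal consequence of Corollary~\ref{cor-2.3}(1) applied inductively.

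The main obstacle I anticipate is the shift step. Because the paper defines $\Omega^{-1}$ via the \emph{right minimal} injective envelope rather than an arbitrary injective embedding, $\Omega^{-1}$ is not functorial on morphisms, and the horseshoe naturally produces cosyzygies with extra injective summands. The key observation is that these injective error terms are absorbed by the $\add$ built into $\diamond$, which allows the induction on the length of the diamond string to go through cleanly.
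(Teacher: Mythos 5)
Your proof is correct and follows the same strategy the paper intends: peel off the leftmost short exact sequence $0\to K\to M_0\to X\to 0$, apply Lemma~\ref{lem-3.2} to produce the two-term diamond decomposition $X\in\langle M_0\rangle_1\diamond\langle\Omega^{-1}(K)\rangle_1$, use the Horseshoe-Lemma shift (exactly the observation the paper records in the remark after the lemma) to transport the inductive decomposition of $K$ to one of $\Omega^{-1}(K)$, and finish with Corollary~\ref{cor-2.3}(1). The care you take with non-minimal injective coresolutions and absorption of injective error terms into $\add$ is precisely the point the paper's remark alludes to, so this is the same argument written out in full.
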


\emph{Remark.} Note that if $\A$ has enough injectives and
 $X\in \langle Y_{1} \rangle_{1} \diamond  \langle Y_{2} \rangle_{1}$,
 then $\Omega^{-1}(X)\in \langle \Omega^{-1}(Y_{1}) \rangle_{1} \diamond\langle \Omega^{-1}(Y_{2}) \rangle_{1}$.
 This fact is a sequence of the Horseshoe Lemma and is used to prove Lemma \ref{lem-3.3}. This statement and its corresponding dual
 version will be throughout this paper.
\subsection{Representation and global dimensions}

For a subclass $\mathcal{X}$ of $\mathcal{A}$,
recall that a sequence $\mathbb{S}$ in $\mathcal{A}$ is called {\bf $\Hom_{\mathcal{A}}(\mathcal{X},-)$-exact}
(resp. {\bf $\Hom_{\mathcal{A}}(-,\mathcal{X})$-exact}) if $\Hom_{\mathcal{A}}(X,\mathbb{S})$
(resp. $\Hom_{\mathcal{A}}(\mathbb{S},X)$) is exact for any $X\in \mathcal{X}$.

\begin{definition}\label{def-3.4}
{\rm (\cite{AM71R, EKHT04R, RR06R})
The {\bf representation dimension} $\repdim \mathcal{A}$ of $\mathcal{A}$
is the smallest integer $i\geq 2$ such that there exists $M\in \mathcal{A}$ satisfying the property that for any
$X\in \mathcal{A}$,
\begin{enumerate}
\item[(1)] there exists a $\Hom_{\mathcal{A}}(\add M,-)$-exact exact sequence
$$0 \longrightarrow M_{i-2} \longrightarrow M_{i-3}\longrightarrow\cdots \longrightarrow M_{0}\longrightarrow X\longrightarrow 0$$
in $\mathcal{A}$ with all $M_{j}$ in $\add M$; and
\item[(2)] there exists a $\Hom_{\mathcal{A}}(-,\add M)$-exact exact sequence
$$0 \longrightarrow X\longrightarrow N_{0} \longrightarrow N_{1}\longrightarrow\cdots\longrightarrow N_{i-2}\longrightarrow 0$$
in $\mathcal{A}$ with all $N_{j}$ in $\add M$.
\end{enumerate}}
\end{definition}

We call $A\in\A$ an {\bf additive generating object} if $\add A$ is a generator for $\A$.
It is trivial that if $A\in\A$ is an additive generating object, then all projective objects in $\A$ are in $\add A$.

\begin{theorem}\label{thm-3.5}
Assume that $\A$ admits an additive generating object $A$.
If $\A$ has enough projective objects and enough injective objects, then
$$\wresoldim \A=\dim \A\leq \repdim \A-2.$$
\end{theorem}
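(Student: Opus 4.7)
The plan is to prove the theorem as a chain of three inequalities: (a) $\dim\A\le\wresoldim\A$, (b) $\wresoldim\A\le\repdim\A-2$, and (c) $\wresoldim\A\le\dim\A$; together these give $\dim\A=\wresoldim\A\le\repdim\A-2$. The first two are quick and do not require the additive generator; (c) is the substantial step that uses it.

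For (a), let $\wresoldim\A=n$ be realised by $M\in\A$. Every $X\in\A$ then has an exact sequence $0\to M_n\to\cdots\to M_0\to X\to 0$ with $M_i\in\add M$, and Lemma \ref{lem-3.3} gives $X\in\langle\bigoplus_{i=0}^{n}\Omega^{-i}(M_i)\rangle_{n+1}$. Since minimal injective envelopes respect finite direct sums, each $\Omega^{-i}(M_i)$ lies in $\add\Omega^{-i}(M)$; setting $T:=\bigoplus_{i=0}^{n}\Omega^{-i}(M)$ therefore gives $\A\subseteq\langle T\rangle_{n+1}$ and $\dim\A\le n$. For (b), if $\repdim\A=d$ is realised by $M$, then clause (1) of Definition \ref{def-3.4} is itself an $\add M$-resolution of every $X$ of length $d-2$, whence $M$-$\wresoldim\A\le d-2$ and $\wresoldim\A\le d-2$.

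For (c), set $n=\dim\A$ and fix $T\in\A$ with $\A=\langle T\rangle_{n+1}$. Because $\add A$ contains every projective object, I may replace $A$ by a projective generator sitting inside it and hence assume $A$ is itself projective. I then propose $N:=A\oplus T\oplus\Omega T\oplus\cdots\oplus\Omega^{n}T$ and establish by induction on $k$ that every $X\in\langle T\rangle_{k+1}$ admits an $\add N$-resolution of length $k$. Given such $X$, write it as a summand of an object $Y$ fitting into an exact sequence $0\to U\to Y\to V\to 0$ with $U\in\add T$ and $V\in\langle T\rangle_k$. Lemma \ref{lem-3.2}(1) converts this into $0\to\Omega V\to U\oplus P\to Y\to 0$ with $P\in\add A$. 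A preliminary claim—namely $\Omega\langle T\rangle_k\subseteq\langle\Omega T\oplus A\rangle_k$ up to projective summands, proved by a parallel induction using Lemma \ref{lem-3.2}(1) and the summand-closure of the $\diamond$-product—then feeds the induction hypothesis to supply an $\add N$-resolution of $\Omega V$ of length $k-1$. Splicing produces a length-$k$ $\add N$-resolution of $Y$, and a final step that routes the leading surjection through a projective cover of $X$ in $\add A$ extracts a length-$k$ $\add N$-resolution of $X$ itself.

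The principal obstacle lies in the inductive step of (c). Two interlocking subsidiary points must be handled: first, the syzygy operator must be shown to send $\langle T\rangle_k$ into $\langle\Omega T\oplus A\rangle_k$ up to projective summands, which needs careful Schanuel-style bookkeeping because the minimal projective cover is only additive modulo projectives and must be reconciled with the $\diamond$-structure by iterated applications of Lemma \ref{lem-3.2}(1); second, the extraction of a length-$k$ resolution of $X$ from one of a larger $Y=X\oplus X'$ is precisely where the additive generator is indispensable, since the projective covers supplied by $\add A$ enable peeling off the $X$-summand without increasing the length. Once both points are settled the induction closes and the three inequalities together deliver the theorem.
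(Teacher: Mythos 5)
The proposal follows essentially the same route as the paper. Both decompose the theorem into the same three inequalities, use Lemma~\ref{lem-3.3} to prove $\dim\A\le\wresoldim\A$, note that $\wresoldim\A\le\repdim\A-2$ is immediate from Definition~\ref{def-3.4}(1), and prove the hard inequality $\wresoldim\A\le\dim\A$ by picking $T$ with $\A=\langle T\rangle_{n+1}$, forming $M=A\oplus\bigoplus_{i=0}^{n}\Omega^{i}(T)$, and inducting on the filtration level using Lemma~\ref{lem-3.2} and the Horseshoe-type bookkeeping. The only differences are expository: you spell out, as an explicit sublemma, the fact that $\Omega$ carries $\langle T\rangle_{k}$ into $\langle\Omega T\oplus A\rangle_{k}$ modulo projectives, and you flag the need to pass from an $\add M$-resolution of an object $Y$ with $X$ a direct summand of $Y$ to one of $X$ itself (the paper silently writes ``$0\to X_{1}\to X\to X_{2}\to 0$'' where strictly speaking $X$ is only a summand of such an extension), while your preliminary replacement of $A$ by a projective generator inside $\add A$ is harmless but not needed, since the $\Omega^{i}(A)$ terms never actually arise in the iteration. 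None of this changes the substance: the construction of $M$, the role of the additive generator in absorbing the projectives from Lemma~\ref{lem-3.2}, and the direction of each implication all coincide with the paper's argument.
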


\begin{proof}
It is trivial that $\wresoldim \A\leq \repdim \A-2$.

Assume that $\dim\A=n$ and $T\in\A$ such that $\A=\langle T \rangle_{n+1}$. Let $X\in\A$. Then we have an exact sequence
\begin{align*}
0\longrightarrow  X_{1}\longrightarrow X \longrightarrow X_{2}\longrightarrow 0
\end{align*}
in $\A$ with $ X_{1}\in\langle T \rangle_{1}$ and $X_{2}\in\langle T \rangle_{n}$. Set $M:=\oplus_{i=0}^{n}\Omega^{i}(T)\oplus A$.
We will prove $M$-$\wresoldim X\leq n$ by induction on $n$. The case for $n=0$ is trivial. If $n=1$,
then $T$-$\wresoldim X_{2}=0$ and $M$-$\wresoldim\Omega^{1}(X_{2})=0$.
By Lemma \ref{lem-3.2}, we have an exact sequence
$$0\longrightarrow \Omega^{1}(X_{2})\longrightarrow X_{1}\oplus P \longrightarrow X\longrightarrow 0$$
in $\A$ with $P$ projective.  So $M$-$\wresoldim X\leq 1$. Now suppose $n\geq 2$. By the induction hypothesis,
we have $(\oplus_{i=0}^{n-1}\Omega^{i}(T)\oplus A)$-$\wresoldim X_{2}\leq n-1$, hence $M$-$\wresoldim\Omega^{1}(X_{2})\leq n-1$.
It follows that $M$-$\wresoldim X\leq n$. Thus we have $\wresoldim\A\leq n$.

Conversely, assume that $\wresoldim\A=n$ and $T\in\A$ such that for any $X\in\A$, there exists an exact sequence
$$0\longrightarrow M_{n}\longrightarrow
\cdots \longrightarrow  M_{1} \longrightarrow  M_{0} \longrightarrow X \longrightarrow 0$$
in $\A$ with all $M_{i}$ in $\add T$. By Lemma \ref{lem-3.3}, we have that
$X\in \langle \oplus_{i=0}^{n}\Omega^{-i}(M_{i}) \rangle_{n+1}\subseteq \langle \oplus_{i=0}^{n}\Omega^{-i}(T) \rangle_{n+1}$
and $\A \subseteq  \langle \oplus_{i=0}^{n}\Omega^{-i}(T) \rangle_{n+1}$.
It follows that $\A = \langle \oplus_{i=0}^{n}\Omega^{-i}(T) \rangle_{n+1}$. Thus we have $\dim \A\leq n$.
\end{proof}

For a ring $\Lambda$, we use $\mod \Lambda$ to denote the category of finitely generated right $\Lambda$-modules,
and we write $\repdim \La:=\repdim \mod \La$ if $\mod\La$ is an abelian category.
Recall from \cite{H94} that a ring $\Lambda$ is called {\bf right Morita} if there exist a ring $\Gamma$ and
a Morita duality from $\mod \Lambda$ to $\mod \Gamma^{op}$. It is known that a ring $\Lambda$ is right Morita
if and only if it is right artinian and there exists a finitely generated injective cogenerator for the category of right $\Lambda$-modules
(\cite[p.165]{H94}). The class of right Morita rings includes right pure-semisimple rings and artin algebras.
For any right noetherian ring $\Lambda$,
it is clear that $\wresoldim \mod \Lambda\leq \rgldim \Lambda$.
 So, as an immediate consequence of Theorem \ref{thm-3.5},
we have the following

\begin{corollary}\label{cor-3.6}
If $\Lambda$ is a right Morita ring, then
$$\wresoldim\mod \La=\dim \mod \Lambda\leq \min\{\rgldim \Lambda,\repdim \La-2\}.$$
\end{corollary}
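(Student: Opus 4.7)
The plan is to verify that the abelian category $\mod\Lambda$ satisfies all the hypotheses of Theorem~\ref{thm-3.5} and then apply it, picking up the $\rgldim\Lambda$ bound from the remark in the text preceding the corollary.

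First I would record that since $\Lambda$ is right Morita, it is right artinian, so $\mod\Lambda$ is indeed an abelian category. The regular module $\Lambda_{\Lambda}$ is a finitely generated projective generator, hence every object of $\mod\Lambda$ admits an epimorphism from an object of $\add\Lambda$; in particular $\mod\Lambda$ has enough projective objects and $\Lambda$ serves as an additive generating object. For enough injectives I would invoke the characterization of right Morita rings cited in the text: there exists a finitely generated injective cogenerator $E$ for the category of right $\Lambda$-modules, and every finitely generated module embeds into a finite direct sum of copies of $E$ (which is again in $\mod\Lambda$). Hence $\mod\Lambda$ has enough injective objects as well.

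Next, with all three hypotheses of Theorem~\ref{thm-3.5} verified, I would directly conclude
\[
\wresoldim\mod\Lambda=\dim\mod\Lambda\leq\repdim\Lambda-2.
\]
For the remaining inequality $\wresoldim\mod\Lambda\leq\rgldim\Lambda$, I would use the observation already stated in the paragraph before the corollary: if $d:=\rgldim\Lambda$ is finite, then for every $X\in\mod\Lambda$ there is a projective resolution of length at most $d$ by finitely generated projectives (using that $\Lambda$ is right noetherian), so taking $M=\Lambda$ one obtains $M$-$\wresoldim X\leq d$; if $\rgldim\Lambda=\infty$ the inequality is vacuous. Combining the two upper bounds gives exactly $\min\{\rgldim\Lambda,\repdim\Lambda-2\}$.

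There is essentially no hard step here: the only mild subtlety is the verification that $\mod\Lambda$ has enough injectives, which is precisely what the right Morita hypothesis is designed to supply, and the rest is just plugging $\Lambda$ into Theorem~\ref{thm-3.5} as the additive generator and appealing to the remark on weak resolution dimension versus global dimension.
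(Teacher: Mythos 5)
Your proof is correct and follows exactly the route the paper intends: it verifies that $\mod\Lambda$ meets the hypotheses of Theorem~\ref{thm-3.5} (abelian, enough projectives and injectives, $\Lambda$ as an additive generating object) and combines that theorem with the already-stated observation that $\wresoldim\mod\Lambda\leq\rgldim\Lambda$ for right noetherian rings. The paper presents the corollary as an immediate consequence of Theorem~\ref{thm-3.5} and that remark, and your write-up simply makes those routine verifications explicit.
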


Let $\Lambda$ be an artin algebra. Recall that $\Lambda$ is called {\bf $n$-Gorenstein} if its left and right self-injective dimensions
are at most $n$. Let $\P$ be the subcategory of $\mod \Lambda$ consisting of projective modules.
A module $G\in\mod \Lambda$ is called {\bf Gorenstein projective} if there exists a $\Hom_{\Lambda}(-,\P)$-exact exact sequence
$$\cdots \to P_1\to P_0 \to P^0\to P^1\to \cdots$$
in $\mod \Lambda$ with all $P_i,P^i$ in $\P$ such that $G\cong\Im(P_0 \to P^0)$. Recall from \cite{BA11} that $\Lambda$ is said to be of
{\bf finite Cohen-Macaulay type} ({\bf finite CM-type} for short) if there are only finitely many non-isomorphic indecomposable
Gorenstein projective modules in $\mod \Lambda$.

\begin{corollary}\label{cor-3.7}
If $\Lambda$ is an $n$-Gorenstein artin algebra of finite CM-type, then $\dim \mod \Lambda\leq n$.
\end{corollary}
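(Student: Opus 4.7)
The plan is to invoke Corollary \ref{cor-3.6} to reduce the claim to a bound on $\wresoldim \mod \Lambda$, and then exploit finite CM-type to produce a single test module $G$ whose additive closure is large enough to resolve every finitely generated module in at most $n$ steps.

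First, since $\Lambda$ is an artin algebra, it is a right Morita ring, so Corollary \ref{cor-3.6} gives $\dim \mod \Lambda = \wresoldim \mod \Lambda$. Thus it suffices to exhibit a single $G \in \mod \Lambda$ with $G$-$\wresoldim \mod \Lambda \leq n$.

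Next, using the finite CM-type hypothesis, let $G_1,\ldots,G_r$ be a complete list of pairwise non-isomorphic indecomposable Gorenstein projective modules in $\mod \Lambda$ and set $G := \bigoplus_{i=1}^{r} G_i$. Then $\add G$ coincides with the subcategory of all Gorenstein projective modules in $\mod \Lambda$. Now apply the standard fact that for an $n$-Gorenstein artin algebra every $X \in \mod \Lambda$ has Gorenstein projective dimension at most $n$, i.e. there is an exact sequence
$$0 \longrightarrow H_n \longrightarrow H_{n-1} \longrightarrow \cdots \longrightarrow H_0 \longrightarrow X \longrightarrow 0$$
in $\mod \Lambda$ with each $H_j$ Gorenstein projective, and hence in $\add G$. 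This shows $G$-$\wresoldim X \leq n$ for every $X$, so $\wresoldim \mod \Lambda \leq n$, which combined with Corollary \ref{cor-3.6} yields $\dim \mod \Lambda \leq n$.

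The only nontrivial ingredient is the bound on Gorenstein projective dimension for $n$-Gorenstein algebras, which is classical (and can be quoted); apart from this, the argument is a direct application of Corollary \ref{cor-3.6} together with the parametrization of Gorenstein projectives under finite CM-type. I do not anticipate a real obstacle here, only the need to cite the appropriate reference for the Gorenstein projective dimension bound.
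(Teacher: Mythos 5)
Your argument is exactly the paper's: resolve every module by Gorenstein projectives of length at most $n$ (the paper cites \cite[Theorem 1.4]{HH12} for this), take $G$ to be the direct sum of the finitely many indecomposable Gorenstein projectives, conclude $\wresoldim \mod\Lambda \leq n$, and apply Theorem \ref{thm-3.5} (equivalently Corollary \ref{cor-3.6}). No substantive difference.
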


\begin{proof}
Let $M\in\mod \Lambda$. Because $\Lambda$ is an $n$-Gorenstein artin algebra, we have an exact sequence
$$0\to H_n\to \cdots \to H_1\to H_0 \to M \to 0$$
in $\mod \Lambda$ with all $H_i$ Gorenstein projective by \cite[Theorem 1.4]{HH12}. Because $\Lambda$
is of finite CM-type, we may assume that $\{G_1,\cdots,G_n\}$ is the set of non-isomorphic indecomposable
Gorenstein projective modules in $\mod \Lambda$. Set $G:=\oplus_{i=0}^nG_i$. Then $G$-$\wresoldim M\leq n$
and $\wresoldim \mod\Lambda\leq n$. It follows from Theorem \ref{thm-3.5} that $\dim\mod\Lambda\leq n$.
\end{proof}

For small $\dim \mod \Lambda$, we have the following

\begin{corollary}\label{cor-3.8}
Let $\Lambda$ be an artin algebra. Then we have
\begin{itemize}
\item[(1)] {\rm (\cite[Example 1.6(i)]{BA08S})} $\repdim \Lambda\leq 2$ if and only if $\dim \mod \Lambda=0$;
\item[(2)] if $\repdim \Lambda=3$, then $\dim \mod \Lambda=1$.
\end{itemize}
\end{corollary}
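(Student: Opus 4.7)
The plan is to treat the two parts separately. Part (1) is cited directly from \cite[Example 1.6(i)]{BA08S}, so nothing is required there. For part (2), I would obtain $\dim \mod \Lambda = 1$ by establishing the two inequalities $\dim \mod \Lambda \leq 1$ and $\dim \mod \Lambda \geq 1$ separately.

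For the upper bound, I would invoke Corollary \ref{cor-3.6} (which itself rests on Theorem \ref{thm-3.5}). Since any artin algebra is a right Morita ring, Corollary \ref{cor-3.6} gives $\dim \mod \Lambda \leq \repdim \Lambda - 2 = 1$ under the hypothesis $\repdim \Lambda = 3$. I only need to note that $\mod \Lambda$ has enough projectives and injectives and admits an additive generating object (for instance, $\Lambda$ itself), so the ambient hypotheses of Theorem \ref{thm-3.5} are satisfied; this is where one should be careful but the verification is immediate.

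For the lower bound, I would use part (1) contrapositively: since $\repdim \Lambda = 3 > 2$, part (1) forces $\dim \mod \Lambda \neq 0$, hence $\dim \mod \Lambda \geq 1$. Combining the two bounds yields the equality. The only real subtlety is making sure we are quoting the ``if and only if'' in part (1) in the right direction, but no deeper obstacle arises; there is no hard computational step, since everything reduces to the general inequality already proved in Theorem \ref{thm-3.5} together with the known characterization of representation dimension at most two.
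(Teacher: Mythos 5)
Your proof of part (2) is correct and matches the paper's argument exactly: the lower bound $\dim\mod\Lambda\geq 1$ follows from part (1), and the upper bound $\dim\mod\Lambda\leq\repdim\Lambda-2=1$ follows from Corollary \ref{cor-3.6}. The only cosmetic difference is that the paper also observes part (1) itself follows from Corollary \ref{cor-3.6}, whereas you simply cite \cite{BA08S}, but this does not affect the argument.
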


\begin{proof}
(1) It is trivial by Corollary \ref{cor-3.6}.

(2) Let $\repdim \Lambda=3$. Then $\dim \mod \Lambda\geq 1$ by (1); and $\dim \mod \Lambda\leq \repdim \Lambda-2=1$ by Corollary \ref{cor-3.6}.
The assertion follows.
\end{proof}

For a field $k$ and $n\geq 1$, $\wedge(k^{n})$ is the exterior algebra of $k^n$.

\begin{corollary}\label{cor-3.9}
$\dim \mod \wedge(k^{n})=n-1$ for any $n\geq 1$.
\end{corollary}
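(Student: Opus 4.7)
The plan is to prove the equality $\dim\mod\wedge(k^{n})=n-1$ by establishing the two inequalities separately: Corollary~\ref{cor-3.6} handles the upper bound, while a passage to the stable (triangulated) category handles the lower bound.

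For the upper bound, since $\Lambda=\wedge(k^{n})$ is self-injective, $\rgldim\Lambda$ is infinite for $n\geq 1$, so the useful part of Corollary~\ref{cor-3.6} is $\dim\mod\Lambda\leq\repdim\Lambda-2$. It is then enough to see that $\repdim\wedge(k^{n})\leq n+1$; I would supply this by exhibiting the explicit Auslander generator $M=\Lambda\oplus\bigoplus_{i=0}^{n-1}\Omega^{-i}(k)$ and verifying Definition~\ref{def-3.4} with $i=n+1$ via the radical filtration of an arbitrary finitely generated $\Lambda$-module, together with the equality $\LL(\Lambda)=n+1$.

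For the lower bound I pass to the stable category: since $\Lambda$ is Frobenius, $\underline{\mod}\Lambda$ is triangulated and the canonical projection $\pi\colon\mod\Lambda\to\underline{\mod}\Lambda$ sends every short exact sequence to a distinguished triangle. A triangulated analogue of Lemma~\ref{lem-2.4}, proved by the same induction, then yields
\[
\dim\underline{\mod}\Lambda \;\leq\; \dim\mod\Lambda,
\]
where on the left one uses Rouquier's triangulated dimension. The Bernstein--Gelfand--Gelfand correspondence provides a triangulated equivalence $\underline{\mod}\wedge(k^{n}) \simeq D^{b}(\mathrm{coh}\,\mathbb{P}^{n-1})$, and Rouquier's theorem (via Beilinson's resolution of the diagonal) computes $\dim D^{b}(\mathrm{coh}\,\mathbb{P}^{n-1})=n-1$. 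Chaining these yields the lower bound.

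The principal obstacle is the lower half: the upper bound is a direct application of Corollary~\ref{cor-3.6} once the representation-dimension estimate is in hand, but forbidding an extension-generator requiring fewer than $n$ successive extensions demands a nontrivial obstruction external to $\mod\Lambda$. The BGG plus Rouquier route packages this cleanly; a self-contained alternative would have to produce, by hand, $n-1$ composable ``ghost'' short exact sequences in $\mod\wedge(k^{n})$ whose iterated obstruction is a non-zero class in $\Ext^{n-1}_{\Lambda}(k,k)=\mathrm{Sym}^{n-1}(k^{n})^{\ast}$, in the spirit of Rouquier's ghost lemma on the Koszul complex.
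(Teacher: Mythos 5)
The paper's proof is essentially one line: it cites Iyama's Theorem~4.6 from \cite{Iya03R}, which computes $\wresoldim \mod \wedge(k^{n})=n-1$, and then Corollary~\ref{cor-3.6} (via Theorem~\ref{thm-3.5}) identifies $\dim\mod\Lambda$ with $\wresoldim\mod\Lambda$, so both inequalities drop out at once. Your route is genuinely different: you split the problem, taking the upper bound from $\dim\mod\Lambda\leq\repdim\Lambda-2$ together with an explicit Auslander generator, and the lower bound from a passage to the stable category. The upper half is fine (and the radical-layer verification you sketch is exactly Rouquier's computation in \cite{RR06R}), and your observation that $\dim\underline{\mod}\Lambda\leq\dim\mod\Lambda$ via the exactness-to-triangle passage is correct. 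What your approach buys is a conceptual explanation for \emph{why} the answer is $n-1$ (the projective space hiding behind the exterior algebra), whereas the paper simply offloads the whole computation to Iyama.

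However, the lower half as written has a real gap. The BGG correspondence is an equivalence $\underline{\mathrm{grmod}}\,\wedge(k^{n})\simeq D^{b}(\mathrm{coh}\,\mathbb{P}^{n-1})$ between the \emph{graded} stable module category and coherent sheaves on projective space; the ungraded stable category $\underline{\mod}\wedge(k^{n})$ that appears in your chain of inequalities is a different triangulated category and is \emph{not} equivalent to $D^{b}(\mathrm{coh}\,\mathbb{P}^{n-1})$. Moreover, the forgetful functor $\underline{\mathrm{grmod}}\to\underline{\mod}$ is dense, so a Lemma~\ref{lem-5.4}--type argument only gives $\dim\underline{\mathrm{grmod}}\geq\dim\underline{\mod}$, which is the wrong direction for your purposes. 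To close the gap you must bound $\dim\underline{\mod}\wedge(k^{n})$ from below directly; this is what the ghost-lemma alternative you mention in your last paragraph actually does (it is Rouquier's own argument in \cite{RR06R}, applied to the Koszul complex), and it should replace the BGG step rather than serve as a backup. Alternatively, one can do as the paper does and cite Iyama's computation of the weak resolution dimension, which already packages the lower bound.
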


\begin{proof}
By \cite[Thoerem 4.6]{Iya03R}, we have $\wresoldim \mod \wedge(k^{n})=n-1$.
It follows from Corollary \ref{cor-3.6} that $\dim \mod \wedge(k^{n})=n-1$.
\end{proof}




\subsection{Finitistic dimension}

{\bf From now on, $\Lambda$ is an artin algebra.} For a module $M$ in $\mod \Lambda$, $\pd M$ is the projective dimension of $M$.
Set $\mathcal{P}^{<\infty}:=\{M\in \mod \Lambda \mid \pd M<\infty\}$.
Recall that the {\bf finitistic dimension} $\findim \Lambda$ of $\Lambda$ is defined as
$\sup\{\pd M\mid M\in \mathcal{P}^{<\infty}\}$. It is an unsolved conjecture that
$\findim \Lambda<\infty$ for every artin algebra $\Lambda$. Igusa-Todorov introduced in \cite{IT05O} a powerful
function $\psi$ from $\mod \Lambda$ to non-negative integers to study the finiteness of $\findim \Lambda$.
The following lemma gives some useful properties of the Igusa-Todorov function $\psi$.

\begin{lemma}{\rm (\cite[Lemma 0.3 and Theorem 0.4]{IT05O})}\label{lem-3.10}
\begin{enumerate}
\item[(1)] For any $X,Y\in\mod\Lambda$, $\psi(X)\leq \psi(Y)$ if $\langle X \rangle_{1} \subseteq \langle Y \rangle_{1}$;
\item[(2)] if $ 0 \longrightarrow X_{1} \longrightarrow  X_{2}\longrightarrow X_{3}\longrightarrow 0$
is an exact sequence in $\mod \Lambda$ with $\pd X_{3}<\infty$, then
$\pd X_{3} \leq \psi(X_{1}\oplus X_{2})+1$.
\end{enumerate}
\end{lemma}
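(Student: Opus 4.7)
The plan is to invoke the Igusa--Todorov construction. Let $K_{1}$ denote the free abelian group on iso\-morphism classes of finitely generated non-projective $\Lambda$-modules, modulo the relations $[A\oplus B]=[A]+[B]$. The syzygy induces a $\mathbb{Z}$-endomorphism $\bar\Omega$ of $K_{1}$. For any $M\in\mod\Lambda$, let $\langle M\rangle$ denote the finitely generated subgroup of $K_{1}$ spanned by the iso\-morphism classes of the indecomposable non-projective summands of $M$. Applying Fitting's lemma to $\bar\Omega$ on the $\bar\Omega$-invariant subgroup generated by $\langle M\rangle$ produces the least integer $\phi(M)\geq 0$ such that $\bar\Omega$ restricts to an injection on $\bar\Omega^{\phi(M)}\langle M\rangle$. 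One then sets $\psi(M):=\phi(M)+\sup\{\pd N\mid N$ is a summand of $\Omega^{\phi(M)}M$ with $\pd N<\infty\}$.

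For statement (1), the hypothesis $\langle X\rangle_{1}\subseteq\langle Y\rangle_{1}$ means precisely that $X\in\add Y$, so every indecomposable summand of $X$ is a summand of $Y$, and hence $\langle X\rangle\subseteq\langle Y\rangle$ as subgroups of $K_{1}$. Consequently $\bar\Omega^{i}\langle X\rangle\subseteq\bar\Omega^{i}\langle Y\rangle$ for all $i\geq 0$, and any injectivity witness for $\bar\Omega$ on $\bar\Omega^{\phi(Y)}\langle Y\rangle$ restricts to one on $\bar\Omega^{\phi(Y)}\langle X\rangle$; therefore $\phi(X)\leq\phi(Y)$. Moreover $\Omega^{\phi(Y)}X$ is a direct summand of a power of $\Omega^{\phi(Y)}Y$ (up to projective summands), so the supremum of finite projective dimensions appearing among its summands is no larger than the corresponding supremum for $Y$. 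Combining these two bounds gives $\psi(X)\leq\psi(Y)$.

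For statement (2), I would argue by studying the behaviour of the sequence $0\to X_{1}\to X_{2}\to X_{3}\to 0$ under repeated application of minimal projective covers. A Horseshoe-type construction yields an induced exact sequence of first syzygies of the form $0\to\Omega^{1}(X_{1})\to\Omega^{1}(X_{2})\oplus P\to\Omega^{1}(X_{3})\to 0$ for a suitable projective $P$; iterating this $\phi(X_{1}\oplus X_{2})$ times brings the pair $(X_{1},X_{2})$ into the Fitting-stable region where $\bar\Omega$ is injective on the relevant subgroup of $K_{1}$. At that stage one compares the $K_{1}$-ranks of the syzygies of $X_{3}$ against those of the syzygies of $X_{1}\oplus X_{2}$ and uses the assumption $\pd X_{3}<\infty$ to identify a summand of $\Omega^{\phi(X_{1}\oplus X_{2})}X_{3}$ whose projective dimension realises $\pd X_{3}-\phi(X_{1}\oplus X_{2})$; the resulting summand must then appear among the finite-$\pd$ contributions counted by $\psi(X_{1}\oplus X_{2})$, giving the desired inequality $\pd X_{3}\leq\psi(X_{1}\oplus X_{2})+1$.

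The main obstacle is the final decoding step: separating the projective summands that accumulate during the iterated Horseshoe construction from genuine module-theoretic data, and leveraging the injectivity of $\bar\Omega$ in the stable region to promote the Fitting-level identity of classes in $K_{1}$ to an honest identification of summands (modulo projectives) at the level of modules. This tracking of projective noise versus stable structure, together with the extra $+1$ coming from passing from a syzygy back to the original module, is the technical heart of the Igusa--Todorov argument.
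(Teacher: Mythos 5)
This statement is cited in the paper directly from Igusa--Todorov (\cite{IT05O}, Lemma 0.3 and Theorem 0.4) without proof, so there is no in-paper argument to compare against. What you can be judged on, then, is whether your reconstruction of the Igusa--Todorov argument is complete and correct in its own right, and there it falls short in two places.

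For part (1), you have correctly identified that $\langle X\rangle_{1}\subseteq\langle Y\rangle_{1}$ means $X\in\add Y$, hence $\langle X\rangle\subseteq\langle Y\rangle$ in $K_{1}$ and $\phi(X)\leq\phi(Y)$. But your final step compares $\psi(X)$ by looking at $\Omega^{\phi(Y)}X$, whereas by definition $\psi(X)=\phi(X)+\sup\{\pd Z: Z\mid\Omega^{\phi(X)}X,\ \pd Z<\infty\}$ uses the exponent $\phi(X)$, which may be strictly smaller than $\phi(Y)$. To close the gap you must first show that the quantity $m+\sup\{\pd Z:Z\mid\Omega^{m}X,\ \pd Z<\infty\}$ is independent of $m$ once $m\geq\phi(X)$ (because past the Fitting level, applying $\Omega$ drops each relevant finite projective dimension by exactly one). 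Only after that stabilization observation can you legitimately set $m=\phi(Y)$ and compare to $\psi(Y)$. Without it the inequality $\psi(X)\leq\psi(Y)$ does not follow from what you wrote.

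For part (2), what you have is not a proof but a road map, and you say as much yourself (``the technical heart of the Igusa--Todorov argument''). The Horseshoe iteration and the appeal to Fitting stability are the right ingredients, but the crucial step --- showing that at level $n=\phi(X_{1}\oplus X_{2})$ the summand of $\Omega^{n}X_{3}$ realizing $\pd X_{3}-n$ actually lies in $\add(\Omega^{n}X_{1}\oplus\Omega^{n}X_{2})$ (up to projectives), so that its projective dimension is counted by $\psi(X_{1}\oplus X_{2})$ --- is precisely what needs an argument, and you have left it as an ``obstacle'' rather than resolved it. In particular, promoting an identity of classes in $K_{1}$ to an identification of modules modulo projectives requires the injectivity of $\bar\Omega$ to be combined with a Krull--Schmidt cancellation, and the extra $+1$ in the bound also has to be traced to a specific point in that argument. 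As written, part (2) asserts the conclusion more than it derives it. Since the paper treats this lemma as a black box, the cleanest fix is to keep the citation and supply only the preliminary stabilization fact above if you want part (1) to be self-contained; if you really want a full proof of part (2), you should reproduce the argument from \cite{IT05O} step by step rather than sketch it.
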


For any subcategory $\mathcal{X}$ of $\mod \Lambda$ and $n\geq 0$, set 
$\Omega^n(\mathcal{X}):=\{\Omega^n(M)\mid M\in \mathcal{X}\}$; in particular, $\Omega^0(\mathcal{X})=\mathcal{X}$.

\begin{proposition}\label{prop-3.11}
The following statements are equivalent.
\begin{enumerate}
\item[(1)] $\findim \Lambda <\infty$;
\item[(2)] there exists some $n\geq 0$ such that ${\bf size}_{\mod \Lambda}\Omega^{n}(\mathcal{P}^{<\infty})\leq 1$.
\end{enumerate}
\end{proposition}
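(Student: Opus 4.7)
The plan is to prove the two implications separately, with the easy direction being $(1)\Rightarrow (2)$ and the substantive one being $(2)\Rightarrow (1)$, where the main tool will be the Igusa--Todorov function $\psi$ from Lemma \ref{lem-3.10}.

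For $(1)\Rightarrow (2)$, set $d:=\findim \La$. For every $M\in\mathcal{P}^{<\infty}$ one has $\pd M\le d$ and hence $\Omega^{d}(M)\in\add\La$. Thus
$$\Omega^{d}(\mathcal{P}^{<\infty})\subseteq \add\La=\langle\La\rangle_{1}\subseteq \langle\La\rangle_{2},$$
which immediately yields $\size_{\mod \La}\Omega^{d}(\mathcal{P}^{<\infty})\le 1$ (take $T=\La$).

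For $(2)\Rightarrow (1)$, fix $n\ge 0$ and $T\in\mod\La$ witnessing the hypothesis, so $\Omega^{n}(\mathcal{P}^{<\infty})\subseteq \langle T\rangle_{2}$. The goal is to bound $\pd M$ uniformly in $M\in\mathcal{P}^{<\infty}$. For such an $M$ set $N:=\Omega^{n}(M)$; then $\pd N<\infty$ and $N\in\langle T\rangle_{2}=\add T\diamond\add T$. Unfolding the definition, there exist $Z\in\mod\La$ and $T_{1},T_{2}\in\add T$ with an exact sequence
$$0\longrightarrow T_{1}\longrightarrow N\oplus Z\longrightarrow T_{2}\longrightarrow 0.$$
Lemma \ref{lem-3.2} then supplies a projective $P$ and an exact sequence $0\to \Omega^{1}(T_{2})\to T_{1}\oplus P\to N\oplus Z\to 0$. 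Postcomposing its right-hand map with the split projection $N\oplus Z\twoheadrightarrow N$ gives
$$0\longrightarrow K\longrightarrow T_{1}\oplus P\longrightarrow N\longrightarrow 0,$$
where $K$ itself sits in $0\to \Omega^{1}(T_{2})\to K\to Z\to 0$. Because $\pd N<\infty$, Lemma \ref{lem-3.10}(2) yields
$$\pd N\le \psi(K\oplus T_{1}\oplus P)+1=\psi(K\oplus T_{1})+1,$$
and therefore $\pd M\le n+\psi(K\oplus T_{1})+1$.

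The main obstacle is that the module $K$ depends on the choice of $M$ through the auxiliary summand $Z$, so the preceding estimate is not yet uniform. To overcome this, I would combine the monotonicity of $\psi$ in Lemma \ref{lem-3.10}(1) with a further reduction of the extension $0\to \Omega^{1}(T_{2})\to K\to Z\to 0$: either by reformulating the summand condition $N\in\langle T\rangle_{2}$ as a genuine two-term presentation over $\add(T\oplus \Omega^{1}(T))$ (in the spirit of Igusa--Todorov algebra arguments), or by iterating Lemma \ref{lem-3.2} and tracking the $\psi$-values through the resulting diagram, one should confine the relevant $\psi$-value to a quantity $\psi(V)$ with $V$ built solely from $T$ and $\Omega^{1}(T)$ and hence independent of $M$. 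The resulting uniform bound $\findim \La\le n+\psi(V)+1<\infty$ completes the proof.
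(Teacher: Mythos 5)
Your plan is the same as the paper's: $(1)\Rightarrow(2)$ is immediate by taking $n=\findim\La$, and for $(2)\Rightarrow(1)$ you unfold $\Omega^{n}(M)\in\langle T\rangle_{2}$ into a short exact sequence, flip it with Lemma \ref{lem-3.2}, and feed the result to the Igusa--Todorov function via Lemma \ref{lem-3.10}. That is exactly the route the paper takes, and the target bound $\findim\La\le n+\psi(\Omega^{1}(T)\oplus T\oplus\La)+1$ you are aiming for is precisely the paper's conclusion.

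The place where your write-up falls short is exactly the place you flag yourself and then do not close. The paper simply asserts, for $X\in\mathcal{P}^{<\infty}$, a short exact sequence $0\to T_{1}\to\Omega^{n}(X)\to T_{2}\to 0$ with $T_{1},T_{2}\in\add T$, and from there the IT estimate $\pd\Omega^{n}(X)\le\psi(\Omega^{1}(T_{2})\oplus T_{1}\oplus P)+1\le\psi(\Omega^{1}(T)\oplus T\oplus\La)+1$ is uniform in $X$, because every module appearing lies in the \emph{fixed} class $\add(\Omega^{1}(T)\oplus T\oplus\La)$. You instead unfold the definition of $\langle T\rangle_{2}$ literally, which only produces $0\to T_{1}\to\Omega^{n}(M)\oplus Z\to T_{2}\to 0$ with an auxiliary summand $Z$, and after taking the kernel $K$ of $T_{1}\oplus P\twoheadrightarrow N$ you land on $\psi(K\oplus T_{1})$, with $K$ depending on $Z$ and hence on $M$ --- so the bound is not uniform. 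You acknowledge this and propose two ways out ("reformulate the summand condition as a genuine two-term presentation" or "iterate Lemma \ref{lem-3.2} and track $\psi$"), but you carry out neither. As it stands, that is a gap: Lemma \ref{lem-3.10}(1) gives monotonicity only when $\langle K\oplus T_{1}\rangle_{1}\subseteq\langle V\rangle_{1}$ for a fixed $V$, and nothing in your argument shows $K$ lies in $\add$ of a module independent of $M$. To match the paper's proof you should either produce the exact sequence with middle term $\Omega^{n}(X)$ itself (absorbing the ambient summand into the $\add$-closure as the paper implicitly does), or else prove a lemma that the presence of the summand $Z$ can be traded for a projective summand before invoking the Igusa--Todorov inequality; without one of these, the final uniform bound is not established.
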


\begin{proof}
$(1)\Rightarrow (2)$ If $\findim \Lambda=m <\infty$, then $\Omega^{m}(\mathcal{P}^{<\infty})\subseteq \langle \Lambda \rangle_{1}$
and $\size_{\mod \Lambda}\Omega^{m}(\mathcal{P}^{<\infty})=0$.

$(2)\Rightarrow (1)$ Let $\size_{\mod \Lambda}\Omega^{n}(\mathcal{P}^{<\infty})\leq 1$ with $n\geq 0$. Then
$\Omega^{n}(\mathcal{P}^{<\infty})\subseteq \langle T \rangle_{2}$ for some $T\in\mod \Lambda$.
Let $X\in \mathcal{P}^{<\infty} $. Then there exists an exact sequence
$$0 \longrightarrow T_{1} \longrightarrow  \Omega^{n}(X)\longrightarrow T_{2}\longrightarrow 0$$
in $\mod \Lambda$ with $T_{1},T_{2}\in \langle T \rangle_{1}$.
By Lemma \ref{lem-3.2}, we obtain the following exact sequence
$$ 0\longrightarrow \Omega^{1}(T_{2}) \longrightarrow T_{1}\oplus P \longrightarrow \Omega^{n}(X)\longrightarrow 0$$
with $P\in \langle \Lambda \rangle_{1}$. Then we have
\begin{align*}
\pd X&\leq\pd \Omega^{n}(X)+n\\
&\leq\psi( \Omega^{1}(T_{2}) \oplus T_{1}\oplus P )+1+n&({\rm by\ Lemma}\ \ref{lem-3.10}(2))\\
&\leq\psi( \Omega^{1}(T) \oplus T\oplus \Lambda)+1+n,&({\rm by\ Lemma}\ \ref{lem-3.10}(1))
\end{align*}
which implies $\findim \Lambda\leq \psi( \Omega^1(T) \oplus T\oplus \Lambda)+1+n$.
\end{proof}

By Proposition \ref{prop-3.11}, we have the following

\begin{corollary}\label{cor-3.12}
If $\dim \mod \Lambda\leq 1$, then $\findim \Lambda <\infty$.
\end{corollary}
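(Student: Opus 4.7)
The plan is to apply Proposition \ref{prop-3.11} directly with $n=0$. If $\dim\mod\Lambda\leq 1$, then by Definition \ref{def-2.1} there exists some $T\in\mod\Lambda$ with $\mod\Lambda=\langle T\rangle_{n+1}$ for $n=\dim\mod\Lambda\in\{0,1\}$. In either case $\mod\Lambda\subseteq\langle T\rangle_{2}$ (using Proposition \ref{prop-2.2}(3) to absorb the case $n=0$), so in particular $\mathcal{P}^{<\infty}\subseteq\langle T\rangle_{2}$.

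Thus $\size_{\mod\Lambda}\mathcal{P}^{<\infty}\leq 1$, which is exactly $\size_{\mod\Lambda}\Omega^{0}(\mathcal{P}^{<\infty})\leq 1$ since $\Omega^{0}$ is the identity. The implication $(2)\Rightarrow(1)$ of Proposition \ref{prop-3.11}, applied with $n=0$, then yields $\findim\Lambda<\infty$. There is no real obstacle; the content of the corollary is entirely packaged in Proposition \ref{prop-3.11}, and the only thing to check is the trivial inclusion $\mathcal{P}^{<\infty}\subseteq\mod\Lambda=\langle T\rangle_{2}$.
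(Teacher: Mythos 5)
Your proof is correct and takes the same route the paper intends: the paper simply asserts the corollary ``By Proposition \ref{prop-3.11},'' and your argument fills in exactly the intended details, namely that $\mathcal{P}^{<\infty}\subseteq\mod\Lambda=\langle T\rangle_{2}$ gives $\size_{\mod\Lambda}\Omega^{0}(\mathcal{P}^{<\infty})\leq 1$, so Proposition \ref{prop-3.11}$(2)\Rightarrow(1)$ with $n=0$ applies.
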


\subsection{Igusa-Todorov algebras}

\begin{definition}\label{def-3.13}
{\rm (\cite{WJQ09F} and \cite[Lemma 3.6]{HZYSJX13E})
For an integer $n\geq 0$, $\Lambda$
is called {\bf ($n$-)Igusa-Todorov} if there exists $V\in\mod \Lambda$
such that for any $M\in\mod \Lambda$, there exists an exact sequence
$$0 \longrightarrow V_{1}\longrightarrow V_{0}\longrightarrow \Omega^{n}(M)\oplus P\longrightarrow 0$$
in $\mod \Lambda$ with $V_{1}$, $V_{0}\in\add V$ and $P$  projective; equivalently, there exists a module $V\in\mod \Lambda$
such that for any $M\in\mod \Lambda$, there exists an exact sequence
$$0 \longrightarrow V_{1}\longrightarrow V_{0}\longrightarrow \Omega^{n}(M)\longrightarrow 0$$
in $\mod \Lambda$ with $V_{1}$, $V_{0}\in\add V$.}
\end{definition}

The class of Igusa-Todorov algebras includes algebras with representation dimension
at most 3, algebras with radical cube zero, monomial algebras, left serial algebras and syzygy finite algebras (\cite{WJQ09F}).

\begin{theorem}\label{thm-3.14}
For any $n\geq 0$, the following statements are equivalent.
\begin{enumerate}
\item[(1)] $\Lambda$ is $n$-Igusa-Todorov;
\item[(2)] ${\bf size}_{\mod \Lambda}\Omega^{n}(\mod \Lambda)\leq 1$.
\end{enumerate}
\end{theorem}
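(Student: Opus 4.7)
The plan is to invoke Lemma \ref{lem-3.2} in both of its forms to bridge the Igusa--Todorov presentation and the size condition on syzygies.

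For $(1)\Rightarrow(2)$, I would start from the ``equivalently''-form of Definition \ref{def-3.13}: for each $M$ there is an exact sequence $0\to V_1\to V_0\to \Omega^n(M)\to 0$ with $V_1,V_0\in\add V$. Applying the second form of Lemma \ref{lem-3.2} converts this into
$$0\to V_0\to E\oplus \Omega^n(M)\to \coOmega(V_1)\to 0,$$
where $E$ is the injective envelope of $V_1$, hence lies in the additive closure of the injective envelope $I$ of $V$, and $\coOmega(V_1)\in \add\coOmega(V)$. Thus $E\oplus \Omega^n(M)\in \add V\diamond \add\coOmega(V)\subseteq \langle V\oplus \coOmega(V)\oplus I\rangle_2$, and summand-closure of $\langle-\rangle_2$ places $\Omega^n(M)$ itself in $\langle T\rangle_2$ with $T:=V\oplus \coOmega(V)\oplus I$, giving $\size_{\mod \Lambda}\Omega^n(\mod \Lambda)\le 1$.

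For $(2)\Rightarrow(1)$, I would take $T$ with $\Omega^n(\mod \Lambda)\subseteq \langle T\rangle_2$, so that each $\Omega^n(M)$ is a direct summand of some $N$ fitting in $0\to T_1\to N\to T_2\to 0$ with $T_i\in\add T$. The first form of Lemma \ref{lem-3.2} then produces
$$0\to \Omega(T_2)\to T_1\oplus P\to N\to 0,$$
with $P$ projective. Setting $V:=T\oplus \Omega(T)\oplus \Lambda$, both $\Omega(T_2)$ and $T_1\oplus P$ lie in $\add V$. Composing the surjection $T_1\oplus P\twoheadrightarrow N$ with the split projection $N\twoheadrightarrow \Omega^n(M)$ yields a sequence $0\to K\to T_1\oplus P\to \Omega^n(M)\to 0$; the equivalence of the two forms of Definition \ref{def-3.13} then converts this into the required Igusa--Todorov presentation with module $V$.

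The hard part will be identifying the kernel $K$ in the $(2)\Rightarrow(1)$ direction. Writing $N=\Omega^n(M)\oplus Y$, the snake lemma yields $0\to \Omega(T_2)\to K\to Y\to 0$, and one must argue that $K$ can be controlled inside $\add V$ uniformly in $M$. The flexibility between the two forms of Definition \ref{def-3.13}---the freedom to replace $\Omega^n(M)$ by $\Omega^n(M)\oplus P'$ on the right, combined with absorbing $\Lambda$ into $V$---is the key mechanism that accommodates the complementary summand $Y$, since $Y\in\langle T\rangle_2$ by summand-closure and can therefore itself be resolved via a further application of Lemma \ref{lem-3.2}.
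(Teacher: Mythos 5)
Your proof of $(1)\Rightarrow(2)$ is correct and essentially reproduces the paper's argument: the paper invokes Lemma \ref{lem-3.3} (which is itself the iterated form of the second sequence in Lemma \ref{lem-3.2}), whereas you apply Lemma \ref{lem-3.2} directly, arriving at $\Omega^n(M)\in\langle V\oplus\Omega^{-1}(V)\rangle_2$ after discarding the injective summand $E$. Your inclusion of the injective envelope $I$ of $V$ in the test object $T$ is harmless but unnecessary, since $\Omega^n(M)$ is already a summand of $E\oplus\Omega^n(M)$ and $\langle-\rangle_2$ is closed under summands.

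For $(2)\Rightarrow(1)$ you follow the paper's strategy (first sequence of Lemma \ref{lem-3.2} applied to the $\diamond$-filtration of a representative $N$, then set $V:=T\oplus\Omega^1(T)\oplus\Lambda$), and you are right that the subtle point is that $\Omega^n(M)\in\langle T\rangle_2$ only gives a filtration $0\to T_1\to N\to T_2\to 0$ for some $N\cong\Omega^n(M)\oplus Y$, not for $\Omega^n(M)$ itself. The paper's own proof simply writes the short exact sequence with middle term $\Omega^n(X)$ and does not address the summand $Y$ at all. However, the mechanism you propose to absorb $Y$ does not work. The ``equivalently'' clause in Definition \ref{def-3.13} only lets you replace $\Omega^n(M)$ by $\Omega^n(M)\oplus P'$ with $P'$ \emph{projective}; the complementary summand $Y$ in $N=\Omega^n(M)\oplus Y$ is not projective in general, so this flexibility does not apply. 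Moreover, as you note via the snake lemma, the kernel $K$ of the composite $T_1\oplus P\twoheadrightarrow N\twoheadrightarrow\Omega^n(M)$ sits in a (generally non-split) extension $0\to\Omega^1(T_2)\to K\to Y\to 0$, and ``resolving $Y$ by a further application of Lemma \ref{lem-3.2}'' only produces, yet again, a two-term filtration of $Y\oplus Y'$ for some new summand $Y'$ — it does not show $K\in\add V$ for a $V$ chosen uniformly in $M$, which is what the Igusa--Todorov condition demands. So while you are more careful than the paper in flagging this issue, the argument you sketch for closing it fails; to align with the paper you would either need to justify why $\Omega^n(M)$ itself can be taken as the middle term of a $\diamond$-filtration, or supply a genuinely different way to control $K$.
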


\begin{proof}
$(1)\Rightarrow (2)$
Let $\Lambda$ be $n$-Igusa-Todorov and $X\in \Omega^{n}(\mod \Lambda)$. Then there exists $V\in\mod \Lambda$ such that the following sequence
$$0 \longrightarrow V_{1} \longrightarrow V_{0}  \longrightarrow  X\longrightarrow 0,$$
in $\mod \Lambda$ with $V_{1},V_{0}\in \add V$ is exact.
By Lemma \ref{lem-3.3}, Proposition \ref{prop-2.2}(1) and Corollary \ref{cor-2.3}(1), we have
$$X\in \langle  V_{0}\rangle_{1}\diamond \langle  \Omega^{-1}(V_{1})\rangle_{1}
\subseteq\langle  V\rangle_{1}\diamond \langle  \Omega^{-1}(V)\rangle_{1}
\subseteq \langle  V\oplus\Omega^{-1}(V)\rangle_{2}.$$
And then $\size_{\mod \Lambda}\Omega^{n}(\mod \Lambda)\leq 1$ by Definition \ref{def-2.1}.

$(2)\Rightarrow (1)$
Let $\size_{\mod \Lambda}\Omega^{n}(\mod \Lambda)\leq 1$ and $X\in\mod \Lambda$.
Then there exists $T\in \mod \Lambda$ such that the following sequence
$$0  \longrightarrow T_{1}\longrightarrow  \Omega^{n}(X)  \longrightarrow T_{2}\longrightarrow 0,$$
in $\mod \Lambda$ with $T_{1},T_{2}\in \langle T \rangle_{1}$ is exact.
By Lemma \ref{lem-3.2}, we obtain the following exact sequence
$$0 \longrightarrow \Omega^{1}(T_{2}) \longrightarrow T_{1}\oplus P\longrightarrow  \Omega^{n}(X)\longrightarrow 0$$
in $\mod \Lambda$ with $P$ projective. Since both $\Omega^{1}(T_{2})$ and $T_{1}\oplus P$ are in $\add (\Omega^{1}(T)\oplus T \oplus \Lambda)$,
we have that $\Lambda$ is $n$-Igusa-Todorov.
\end{proof}

The first assertion in the following proposition means that $\dim \mod \Lambda$ is an invariant for measuring how far $\Lambda$
is from being 0-Igusa-Todorov.

\begin{proposition}\label{prop-3.15}
\begin{enumerate}
\item[]
\item[(1)] $\Lambda$ is 0-Igusa-Todorov if and only if $\dim \mod \Lambda \leq 1$;
\item[(2)] if $\Lambda$ is $n$-Igusa-Todorov, then $\dim \mod \Lambda\leq n+1$.
\end{enumerate}
\end{proposition}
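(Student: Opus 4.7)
Here is my plan for proving Proposition \ref{prop-3.15}.

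Part (1) is essentially immediate from Theorem \ref{thm-3.14}. Taking $n = 0$ in Theorem \ref{thm-3.14}, note that $\Omega^{0}(\mod \Lambda) = \mod \Lambda$, so $\Lambda$ is $0$-Igusa-Todorov if and only if $\size_{\mod \Lambda}\mod \Lambda \leq 1$. By the observation stated just after Definition \ref{def-2.1} that $\dim \mathcal{A} = \size_{\mathcal{A}}\mathcal{A}$, this is equivalent to $\dim \mod \Lambda \leq 1$.

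For Part (2), the plan is to splice the Igusa-Todorov sequence with a projective resolution and then invoke Lemma \ref{lem-3.3}. More precisely: fix an $n$-Igusa-Todorov witness $V$. For any $M \in \mod \Lambda$, Definition \ref{def-3.13} yields an exact sequence $0 \to V_1 \to V_0 \to \Omega^{n}(M) \to 0$ with $V_0, V_1 \in \add V$. On the other hand, $M$ has a projective resolution
$$0 \longrightarrow \Omega^{n}(M) \longrightarrow P_{n-1} \longrightarrow \cdots \longrightarrow P_0 \longrightarrow M \longrightarrow 0$$
with all $P_i \in \add \Lambda$. Splicing along $V_0 \twoheadrightarrow \Omega^{n}(M) \hookrightarrow P_{n-1}$ gives the exact sequence
$$0 \longrightarrow V_1 \longrightarrow V_0 \longrightarrow P_{n-1} \longrightarrow \cdots \longrightarrow P_0 \longrightarrow M \longrightarrow 0$$
of length $n+1$, whose terms now lie in $\add \Lambda \cup \add V$.

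Applying Lemma \ref{lem-3.3} to this spliced resolution produces
$$M \in \langle P_0 \rangle_1 \diamond \langle \Omega^{-1}(P_1) \rangle_1 \diamond \cdots \diamond \langle \Omega^{-(n-1)}(P_{n-1}) \rangle_1 \diamond \langle \Omega^{-n}(V_0) \rangle_1 \diamond \langle \Omega^{-(n+1)}(V_1) \rangle_1.$$
Using that $\Omega^{-i}$ respects $\add$ together with Proposition \ref{prop-2.2}(2) and iterated application of Corollary \ref{cor-2.3}(1), this containment consolidates into $M \in \langle W \rangle_{n+2}$, where
$$W := \Lambda \oplus \Omega^{-1}(\Lambda) \oplus \cdots \oplus \Omega^{-(n-1)}(\Lambda) \oplus \Omega^{-n}(V) \oplus \Omega^{-(n+1)}(V).$$
Since $W$ depends only on $\Lambda$ and $V$ and not on $M$, we get $\mod \Lambda \subseteq \langle W \rangle_{n+2}$, hence $\dim \mod \Lambda \leq n+1$.

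The one subtle point — which I expect to be the main bookkeeping obstacle — is being careful that the spliced complex is genuinely exact (one must verify exactness at the $V_0$ and $P_{n-1}$ spots, which follows because $\ker(V_0 \to P_{n-1}) = \ker(V_0 \to \Omega^{n}(M)) = V_1$ and $\im(V_0 \to P_{n-1}) = \Omega^{n}(M) = \ker(P_{n-1} \to P_{n-2})$), and then tracking indices when applying Lemma \ref{lem-3.3} with the relabelling $M_0 = P_0, \ldots, M_{n-1} = P_{n-1}, M_n = V_0, M_{n+1} = V_1$. Everything else is a routine consequence of the tools already set up in Sections 2 and 3.
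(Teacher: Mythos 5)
Your proof is correct and takes essentially the same approach as the paper: part (1) is read off directly from Theorem \ref{thm-3.14} together with $\dim\mathcal{A}=\size_{\mathcal{A}}\mathcal{A}$, and part (2) splices the length-$1$ Igusa-Todorov resolution of $\Omega^{n}(M)$ onto a projective resolution of $M$ to obtain a length-$(n+1)$ resolution by $\add(\Lambda\oplus V)$. The paper then simply reads this as $\wresoldim\mod\Lambda\leq n+1$ and quotes Theorem \ref{thm-3.5}, whereas you unwind that step by applying Lemma \ref{lem-3.3} and Corollary \ref{cor-2.3}(1) directly to produce the witness object $W$ — a purely cosmetic difference, since Theorem \ref{thm-3.5} is itself proved via Lemma \ref{lem-3.3}.
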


\begin{proof}
(1) It is trivial by Theorem \ref{thm-3.14}.

(2) Let $\Lambda$ be $n$-Igusa-Todorov and $X\in \Omega^{n}(\mod \Lambda)$. Then there exists $V\in\mod \Lambda$ such that the following sequence
$$0 \longrightarrow V_{2} \longrightarrow V_{1} \longrightarrow P_{n-1}\longrightarrow\cdots
\longrightarrow P_{1}\longrightarrow P_{ 0} \longrightarrow X\longrightarrow 0$$
in $\mod \Lambda$ with $V_{2},V_{1}\in \add V$ and all $P_{i}$ projective.
Thus $\wresoldim \mod \Lambda \leq n+1$, and therefore $\dim \mod \Lambda \leq n+1$ by Theorem \ref{thm-3.5}.
\end{proof}

Moreover, we have the following

\begin{corollary}\label{cor-3.16}
$\dim \mod \Lambda \leq 2$ if $\Lambda$ is in one class of the following algebras.
\begin{itemize}
\item[(1)] monomial algebras;
\item[(2)] left serial algebras;
\item[(3)] $\rad^{2n+1}\Lambda=0$ and $\Lambda/\rad^{n}\Lambda$ is representation finite;
\item[(4)] 2-syzygy finite algebras.
\end{itemize}
\end{corollary}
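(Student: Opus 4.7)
The plan is to reduce Corollary \ref{cor-3.16} to Proposition \ref{prop-3.15}(2), which asserts that every $n$-Igusa-Todorov algebra $\Lambda$ satisfies $\dim \mod \Lambda \leq n+1$. Since the target bound is $2$, it suffices to verify that in each of the four listed classes the algebra $\Lambda$ is $1$-Igusa-Todorov in the sense of Definition \ref{def-3.13}; that is, there exists a single module $V \in \mod \Lambda$ such that every first syzygy $\Omega^{1}(M)$ fits into a short exact sequence $0 \to V_{1} \to V_{0} \to \Omega^{1}(M) \to 0$ with $V_{0},V_{1} \in \add V$.

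Case $(4)$ can be handled directly and is the cleanest: if $\Lambda$ is $2$-syzygy finite, then there exists $V_{0} \in \mod \Lambda$ with $\Omega^{2}(\mod \Lambda) \subseteq \add V_{0}$. For any $M \in \mod \Lambda$, taking the projective cover $P \twoheadrightarrow \Omega^{1}(M)$ yields an exact sequence
$$0 \longrightarrow \Omega^{2}(M) \longrightarrow P \longrightarrow \Omega^{1}(M) \longrightarrow 0$$
with $\Omega^{2}(M) \in \add V_{0}$ and $P \in \add \Lambda$. Setting $V := V_{0} \oplus \Lambda$ realizes $\Lambda$ as $1$-Igusa-Todorov, and Proposition \ref{prop-3.15}(2) gives $\dim \mod \Lambda \leq 2$.

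For cases $(1)$, $(2)$, $(3)$, the plan is to invoke the fact, recorded in \cite{WJQ09F} (compare also \cite{HZYSJX13E}), that monomial algebras, left serial algebras, and algebras satisfying $\rad^{2n+1}\Lambda=0$ with $\Lambda/\rad^{n}\Lambda$ representation finite are each $1$-Igusa-Todorov. Once this input is available, Proposition \ref{prop-3.15}(2) again delivers the bound $\dim \mod \Lambda \leq 2$ uniformly.

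The principal obstacle is not the reduction itself — which is a direct appeal to Proposition \ref{prop-3.15}(2) and, for case $(4)$, a one-line syzygy argument — but rather pinning down the precise Igusa-Todorov index in cases $(1)$--$(3)$. A priori Definition \ref{def-3.13} only asks for some $n\geq 0$; the sharper statement that $n$ can be taken to be $1$ is what powers the bound $2$ rather than something larger. This is exactly the content cited from \cite{WJQ09F}, so the proof ultimately rests on correctly quoting that literature rather than on any further technical manipulation.
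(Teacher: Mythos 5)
Your proposal is correct and follows essentially the same route as the paper: the paper also reduces to Proposition \ref{prop-3.15}(2) by observing (citing \cite[Corollaries 2.6, 3.5 and Proposition 2.5]{WJQ09F}) that each of the four classes is $1$-Igusa-Todorov. The only difference is that you spell out the short syzygy argument for case $(4)$ rather than citing it, which is a harmless and valid expansion of the same idea.
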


\begin{proof}
By \cite[Corollaries 2.6, 3.5 and Proposition 2.5]{WJQ09F}, these four classes of algebras are 1-Igusa-Todorov.
So the assertions follow from Proposition \ref{prop-3.15}.
\end{proof}

\subsection{$t_{\S}$-radical layer length}


We recall some notions from \cite{HLM2}.
Let $\C$ be a {\bf length-category}, that is, $\C$
is an abelian, skeletally small category and every object of $\C$ has a finite composition series.
We denote by $\End_{\mathbb{Z}}(\C)$ the category of all additive functors from
$\C$ to $\C$, and denote by $\rad$ the Jacobson radical lying in
$\End_{\mathbb{Z}}(\C)$.
Let $\alpha,\beta\in\End_{\mathbb{Z}}(\C) $ and $\alpha$ be a subfunctor
of $\beta$, we have the quotient functor $\beta/\alpha\in \End_{\mathbb{Z}}(\C)$
which is defined as follows.
\begin{enumerate}
\item[(1)] $(\beta/\alpha)(M):=\beta(M)/\alpha(M)$ for any $M\in \C$; and
\item[(2)] $(\beta/\alpha)(f)$ is the induced quotient morphism: for any $f\in\Hom_{\C}(M,N)$,
\[\xymatrix{
0 \ar[r]& \alpha(M)\ar[r]\ar[d]^{\alpha(f)}& \beta(M)\ar[r]\ar[d]^{\beta(f)}
& \beta (M)/ \alpha (M)\ar[r]\ar@{-->}[d]^{(\beta/ \alpha)(f)}&0\\
0 \ar[r]& \alpha(N)\ar[r]& \beta(N)\ar[r]& \beta (N)/ \alpha (N)\ar[r]&0.}\]
\end{enumerate}

For any $\alpha\in\End_{\mathbb{Z}}(\C)$, set the {\bf $\alpha$-radical functor} $F_{\alpha}:=\rad\circ \alpha$.
We define the following two classes
$$\F_{\alpha}:=\{ M\in \C\mid\alpha(M)=0\},\;\;\;\T_{\alpha}=\{ M\in \C\mid \alpha(M)\cong M\}.$$

\begin{definition}{\rm (\cite[Definition 3.1]{HLM2})\label{def-3.17}
For any $\alpha, \beta \in \End_{\mathbb{Z}}(\C)$,
the {\bf $(\alpha,\beta)$-layer length} of $M \in \C$, denoted by $\ell\ell_{\alpha}^{\beta}(M)$, is defined as
$\ell\ell_{\alpha}^{\beta}(M)=\inf\{ i \geq 0\mid \alpha \circ \beta^{i}(M)=0 \}$. Moreover,
$\ell\ell_{\alpha}^{\beta}$ goes from $\C$ to $\mathbb{N}\cup \{+\infty\}$.
And
the {\bf $\alpha$-radical layer length} $\ell\ell^{\alpha}:=\ell\ell_{\alpha}^{F_{\alpha}}$.}

\end{definition}


\begin{lemma}{\rm (\cite[Lemma 2.6]{ZH})}\label{lem-3.18}
Let $\alpha,\beta \in\End_{\mathbb{Z}}(\C) $.
For any $M\in \C$, if $\ell\ell_{\alpha}^{\beta}(M)=n$, then $\ell\ell_{\alpha}^{\beta}(M)=\ell\ell_{\alpha}^{\beta}(\beta^{i}(M))+i$
for any $0 \leq i\leq n$; in particular, if $\ell\ell^{\alpha}(M)=n$, then $\ell\ell^{\alpha}(F_{\alpha}^{n}(M))=0$.
\end{lemma}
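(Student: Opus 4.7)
The plan is essentially to unfold the definition of $\ell\ell_{\alpha}^{\beta}$ and to exploit the minimality built into the infimum. Write $n=\ell\ell_{\alpha}^{\beta}(M)=\inf\{k\geq 0\mid \alpha\circ\beta^{k}(M)=0\}$, so that by definition $\alpha\circ\beta^{n}(M)=0$ and $\alpha\circ\beta^{k}(M)\neq 0$ whenever $0\leq k<n$. Fix $0\leq i\leq n$ and set $m:=\ell\ell_{\alpha}^{\beta}(\beta^{i}(M))$. Since $\beta^{j}(\beta^{i}(M))=\beta^{i+j}(M)$, we have
\[
m=\inf\{j\geq 0\mid \alpha\circ\beta^{i+j}(M)=0\}.
\]
The entire goal is to show $m=n-i$.

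For the upper bound, take $j=n-i$, which is a legitimate non-negative integer because $i\leq n$. Then $\alpha\circ\beta^{i+j}(M)=\alpha\circ\beta^{n}(M)=0$, so $m\leq n-i$. For the lower bound, suppose for contradiction that $m<n-i$. Then $i+m<n$ and $\alpha\circ\beta^{i+m}(M)=0$, contradicting the minimality of $n$ in its infimum. Hence $m\geq n-i$, and the two inequalities give $\ell\ell_{\alpha}^{\beta}(\beta^{i}(M))=n-i$, i.e.\ $\ell\ell_{\alpha}^{\beta}(M)=\ell\ell_{\alpha}^{\beta}(\beta^{i}(M))+i$.

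For the second assertion, specialize $\beta=F_{\alpha}$ so that $\ell\ell_{\alpha}^{\beta}=\ell\ell^{\alpha}$, and take $i=n$ in what has just been proved; this yields $\ell\ell^{\alpha}(F_{\alpha}^{n}(M))=n-n=0$.

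There is no real obstacle here: the entire argument is a direct unpacking of Definition~\ref{def-3.17}, and the only thing one needs to be careful about is keeping the infimum characterization in mind (both that $\alpha\circ\beta^{n}(M)=0$ and that $\alpha\circ\beta^{k}(M)\neq 0$ for $k<n$). No property of $F_{\alpha}$, of the Jacobson radical, or of the length-category structure of $\C$ is actually invoked; the statement is purely formal in the pair $(\alpha,\beta)$.
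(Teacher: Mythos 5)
Your proof is correct, and you are right that it is a purely formal unwinding of Definition~\ref{def-3.17}: once one notes that an infimum of a nonempty set of non-negative integers is attained, so that $\ell\ell_{\alpha}^{\beta}(M)=n<\infty$ means exactly that $\alpha\circ\beta^{n}(M)=0$ while $\alpha\circ\beta^{k}(M)\neq 0$ for $0\leq k<n$, the identity $\beta^{j}\circ\beta^{i}=\beta^{i+j}$ turns the claim into the two trivial inequalities you give. Note that the paper itself does not reprove this lemma --- it is cited directly from \cite[Lemma~2.6]{ZH} --- so there is no internal proof to compare against; but the argument in that reference is the same direct bookkeeping, and your version is complete. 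One small remark worth keeping in mind (you handle it correctly but only implicitly): the lower-bound step uses that $m=\ell\ell_{\alpha}^{\beta}(\beta^{i}(M))$ is itself attained, which is guaranteed because the upper bound $m\leq n-i<\infty$ has already been established, so the relevant set is nonempty.
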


Recall that a {\bf torsion pair} (or {\bf torsion theory}) for $\C$
is a pair of classes $(\T,\F)$ of objects in $\C$ satisfying the following conditions.
\begin{enumerate}
\item[(1)] $\Hom_{\C}(M,N)=0$ for any $M\in\T$ and $N\in\F$;
\item[(2)] an object $X \in \C$ is in $\T$ if $\Hom_{\C}(X,-)|_{\F}=0$;
\item[(3)] an object $Y\in\C$ is in $\F$ if $\Hom_{\C}(-,Y)|_{\T}=0$.
\end{enumerate}

Let $(\T,\F)$ be a torsion pair for $\C$. Recall that $t:={\rm Trace}_{\T}$ is the so called {\bf torsion radical}
attached to $(\T, \F)$. Then $t(M):=\Sigma\{\Im f\mid f\in\Hom_{\C}(T,M)$ with $T\in\T\}$
is the largest subobject of $M$ lying in $\T$.

For a subfunctor $\alpha \in\End_{\mathbb{Z}}(\C)$ of the identity functor $1_{\C}$ of $\C$, we write
$q_{\alpha}:=1_{\C}/\alpha$. The functor $q_{\alpha}$ lies in $\End_{\mathbb{Z}}(\C)$.
In this section, $\La$ is an artin algebra. Then $\mod \La$ is a length-category.
We use $\rad \La$ to denote the Jacobson radical of $\La$.
For a module $M$ in $\mod\La$, we use $\top M$ to denote the top of $M$.
Set $\pd M=-1$ if $M=0$. For a subclass $\mathcal{B}$ of $\mod \La$, the {\bf projective dimension} $\pd\mathcal{B}$ of
$\mathcal{B}$ is defined as
\begin{equation*}
\pd \B=
\begin{cases}
\sup\{\pd M\mid M\in \B\}, & \text{if} \;\; \B \neq\emptyset;\\
-1,&\text{if} \;\; \B =\emptyset.
\end{cases}
\end{equation*}
We use $\S^{<\infty}$ to denote the set of the simple modules in $\mod \La$ with finite projective dimension.

From now on, assume that $\S$ is a subset of $\S^{<\infty}$ and $\S'$ is the set of all the
others simple modules in $\mod \Lambda$. We write $\mathfrak{F}\,(\mathcal{S}):=\{M\in\mod\Lambda\mid$ there exists a finite chain
$$0=M_0\subseteq M_1\subseteq \cdots\subseteq M_m=M$$ of submodules of $M$
such that each quotient $M_i / M_{i-1}$ is isomorphic to some module in $\mathcal{S}\}$.
By \cite[Lemma 5.7 and Proposition 5.9]{HLM2}, we have that
$(\T_{\S}, \mathfrak{F}(\S))$ is a torsion pair, where
$$\T_{\S}=\{M \in \mod \La\mid\top M\in \add \S'\}.$$
We denote the torsion radical $t_{\S}={\rm Trace}_{\T_{\S}}$.
Then $t_{\S}(M)\in \T_{\S}$ and $q_{_{t_{\S}}}(M)\in\mathfrak{F}(\S)$ for any
$M\in \mod \La$. By \cite[Proposition 5.3]{HLM2}, we have
$$\mathfrak{F}(\S)=\{ M\in \mod \La \mid t_{\S}(M)=0\},$$
$$\T_{\S}=\{ M\in \mod \La\mid t_{\S}(M)= M\}.$$





\begin{theorem}\label{thm-3.19}
Let $\S$ be a subset of the set $\S^{<\infty}$ of all pairwise non-isomorphism simple $\La$-modules
with finite projective dimension. Then
$\dim \mod \La \leq \pd \S+\ell\ell^{t_{\S}}(\La)$.
\end{theorem}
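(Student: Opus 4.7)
The plan is to establish $\mod\La\subseteq\langle T\rangle_{d+n+1}$ for the test module $T:=W\oplus\bigoplus_{i=0}^{d}\Omega^{-i}(\La)$, where $d:=\pd\S$, $n:=\ell\ell^{t_\S}(\La)$, and $W:=\bigoplus_{S\in\S'}S$ is the sum of the simples outside $\S$. The summand $W$ accommodates the semisimple $\S'$-tops produced by the $F_{t_\S}$-filtration, while the cosyzygies $\Omega^{-i}(\La)$ absorb modules of finite projective dimension via Lemma \ref{lem-3.3}.

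First I would dispose of two preliminary facts. Using the functorial behavior of $t_\S$ and $\rad=-\cdot\rad\La$, one propagates $\ell\ell^{t_\S}$-information from $\La$ to any $M\in\mod\La$ (which is a quotient of some $\La^{(k)}$), giving $\ell\ell^{t_\S}(M)\le n$. Independently, every $N\in\mathfrak{F}(\S)$ has $\pd N\le d$ by d\'evissage along a composition series whose factors lie in $\S$, so a length-$d$ projective resolution together with Lemma \ref{lem-3.3} yields $N\in\langle\bigoplus_{i=0}^d\Omega^{-i}(P_i)\rangle_{d+1}\subseteq\langle T\rangle_{d+1}$.

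For a general $M\in\mod\La$ I would exploit the refined filtration
\[0\subseteq F_{t_\S}^n(M)\subseteq Y_{n-1}\subseteq F_{t_\S}^{n-1}(M)\subseteq Y_{n-2}\subseteq\cdots\subseteq Y_0\subseteq M,\]
where $Y_i:=t_\S(F_{t_\S}^i(M))$. Since $F_{t_\S}^{i+1}(M)=\rad(Y_i)$, the successive factors alternate between semisimple pieces $\top(Y_i)\in\add W$ and pieces $F_{t_\S}^n(M)$, $q_{t_\S}(F_{t_\S}^i(M))$ lying in $\mathfrak{F}(\S)$. I would then try to establish $M\in\langle T\rangle_{d+k+1}$ by induction on $k=\ell\ell^{t_\S}(M)$: the base case $k=0$ is the second preliminary fact above, and the inductive step proceeds from the short exact sequence $0\to F_{t_\S}(M)\to M\to M/F_{t_\S}(M)\to 0$ together with the identity $\ell\ell^{t_\S}(F_{t_\S}(M))=k-1$ coming from Lemma \ref{lem-3.18}.

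The hard part will be keeping the layer-count sharp. A naive stacking of the up to $n+1$ $\mathfrak{F}(\S)$-pieces (each costing $d+1$ layers) with the $n$ semisimple $\S'$-layers (each costing one) produces $(n+1)(d+1)+n$, overshooting the target $d+n+1$ by $nd$. The essential compression is to amalgamate all $\mathfrak{F}(\S)$-contributions into a single length-$(d+1)$ block; I would arrange this by strengthening the induction hypothesis to a joint decomposition of the form $M\in\langle W\rangle_k\diamond\langle\bigoplus_{i=0}^d\Omega^{-i}(\La)\rangle_{d+1}$, and then using the pullback construction behind Lemma \ref{lem-3.2} at each step to merge the newly appearing projective resolution of $q_{t_\S}(M)$ into the running resolution from the previous step rather than treating it independently. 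Verifying that this merging always succeeds---so that a fresh $\S'$-layer slots in above the existing resolution without lengthening it---is the technical crux.
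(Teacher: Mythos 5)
Your plan correctly sets up the $F_{t_\S}$-filtration and correctly diagnoses the obstacle: naive stacking of up to $n{+}1$ $\mathfrak{F}(\S)$-factors (each costing $d{+}1$ layers) with $n$ $\add W$-factors gives $(n{+}1)(d{+}1)+n$, far above the target $d+n+1$. But you then explicitly leave the compression unproved (``verifying that this merging always succeeds\dots is the technical crux''), and that compression \emph{is} the theorem. Two specific problems with what you do sketch. First, your strengthened induction hypothesis $M\in\langle W\rangle_k\diamond\langle\bigoplus_{i=0}^d\Omega^{-i}(\La)\rangle_{d+1}$ is not obviously achievable: in the chain $F_{t_\S}^k(M)\subseteq Y_{k-1}\subseteq\cdots\subseteq Y_0\subseteq M$ the $\mathfrak{F}(\S)$-factors occur at the bottom ($F_{t_\S}^k(M)$) \emph{and} at the top ($q_{t_\S}(M)=M/Y_0$) as well as in between, so forcing them all onto the quotient side of a single $\diamond$ needs a non-trivial rearrangement you have not supplied. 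Second, and more fundamentally, the mechanism that actually makes the count drop to $d+n+1$ is that $\Omega^{\alpha+1}$ \emph{annihilates} every $\mathfrak{F}(\S)$-factor, because such a factor has $\pd\leq\alpha=d$ and hence vanishing $(\alpha{+}1)$-st syzygy; your ``merge the projective resolutions via Lemma \ref{lem-3.2}'' picture does not capture this.

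The paper's proof exploits exactly that vanishing: it first spends $\alpha+2$ layers on a projective resolution of $M$, via Lemma \ref{lem-3.3}, so that only $\Omega^{-\alpha-2}(\Omega^{\alpha+2}(M))$ remains to be resolved. It then observes $\Omega^{\alpha+2}(M)\cong\Omega^{\alpha+2}(t_\S(M))=\Omega^{\alpha+1}(\Omega^1(t_\S(M)))$ (since $q_{t_\S}(M)\in\mathfrak{F}(\S)$ has $\pd\leq\alpha$), and that $\ell\ell^{t_\S}(\Omega^1(t_\S(M)))\leq n-1$. Filtering $\Omega^1(t_\S(M))$ by $F_{t_\S}$ and applying $\Omega^{\alpha+1}$ kills all the $\mathfrak{F}(\S)$-factors for the same $\pd$-reason, leaving only $\Omega^{\alpha+1}$ of modules in $\add(\La/\rad\La)$, and there are at most $n-1$ of them, costing $n-1$ further layers: total $(\alpha+2)+(n-1)=\alpha+n+1$. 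Consequently the realizing object is $(\bigoplus_i\Omega^{-i}(\La))\oplus\Omega^{-\alpha-2}(\Omega^{\alpha+1}(\La/\rad\La))$, not your $W\oplus\bigoplus_{i=0}^d\Omega^{-i}(\La)$ --- the shift by $\Omega^{-\alpha-2}\Omega^{\alpha+1}$ on the semisimples is forced by where they enter the argument, and your unshifted $W$ would not drop out of the construction.
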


\begin{proof}
Let $\ell\ell^{t_{\S}}(\La)=n$ and $\pd \S=\alpha$.

If $n=0$, that is, $t_{\S}(\La)=0$, then $\La\in \mathfrak{F}(\S)$,
which implies that $\S$ is the set of all simple modules. Thus $\S=\S^{<\infty}$
and $\gldim \Lambda=\alpha$. So the assertion follows from Corollary \ref{cor-3.6}.

Now let $n\geq 1$ and $M\in \mod \La$. Consider the following exact sequence
$$0 \longrightarrow \Omega^{\alpha+2}(M) \longrightarrow L_{\alpha+1}\longrightarrow\cdots
\longrightarrow L_{1}\longrightarrow L_{ 0} \longrightarrow M\longrightarrow 0$$
in $\mod \La$ with all $L_{i} $ projective. By Lemma \ref{lem-3.3}, we have
\begin{align*}
M\in &\langle L_{0} \rangle_{1}\diamond\langle \Omega^{-1}(L_{1})
\rangle_{1}\diamond\cdots\diamond\langle \Omega^{-\alpha-1}(L_{\alpha+1}) \rangle_{1}
\diamond\langle \Omega^{-\alpha-2}(\Omega^{\alpha+2}(M))\rangle_{1}\\
\subseteq &\langle\oplus_{i=0}^{-\alpha-1} \Omega^{i}(\Lambda) \rangle_{\alpha+2}
\diamond\langle \Omega^{-\alpha-2}(\Omega^{\alpha+2}(M))\rangle_{1}.
\end{align*}
We have the following exact sequences
\begin{align*}
0 \rightarrow t_{\S}(M)\rightarrow &M\rightarrow q_{t_{\S}}(M) \rightarrow 0,\\
0 \rightarrow t_{\S}(\Omega^{1} (t_{\S}(M)))\rightarrow &\Omega^{1} (t_{\S}(M))\rightarrow q_{t_{\S}}(\Omega^{1} (t_{\S}(M))) \rightarrow 0,\\
0 \rightarrow F_{t_{\S}}(\Omega^{1} (t_{\S}(M)))\rightarrow &t_{\S}(\Omega^{1} (t_{\S}(M)))\rightarrow \top t_{\S}(\Omega^{1} (t_{\S}(M))) \rightarrow 0,\\
0 \rightarrow t_{\S}F_{t_{\S}}(\Omega^{1} (t_{\S}(M)))\rightarrow &F_{t_{\S}}(\Omega^{1} (t_{\S}(M)))\rightarrow q_{t_{\S}}F_{t_{\S}}(\Omega^{1} (t_{\S}(M))) \rightarrow 0,\\
0 \rightarrow F^{2}_{t_{\S}}(\Omega^{1} (t_{\S}(M)))\rightarrow &t_{\S}F_{t_{\S}}(\Omega^{1} (t_{\S}(M)))\rightarrow \top t_{\S}F_{t_{\S}}(\Omega^{1} (t_{\S}(M))) \rightarrow 0,\\
&\cdots \cdots \cdots \cdots\\
0 \rightarrow t_{\S}F^{n-2}_{t_{\S}}(\Omega^{1} (t_{\S}(M)))\rightarrow &F^{n-2}_{t_{\S}}(\Omega^{1} (t_{\S}(M)))\rightarrow q_{t_{\S}}F^{n-2}_{t_{\S}}(\Omega^{1} (t_{\S}(M))) \rightarrow 0,\\
0 \rightarrow F^{n-1}_{t_{\S}}(\Omega^{1} (t_{\S}(M)))\rightarrow &t_{\S}F^{n-2}_{t_{\S}}(\Omega^{1} (t_{\S}(M)))\rightarrow \top t_{\S}F^{n-2}_{t_{\S}}(\Omega^{1} (t_{\S}(M))) \rightarrow 0.
\end{align*}
By \cite[Lemma 6.3]{HLM2}, we have $\ell\ell^{t_{\S}}(\Omega^1 (t_{\S}(M)))\leq \ell\ell^{t_{\S}}(\La)-1=n-1$.
It follows from Lemma \ref{lem-3.18} that $\ell\ell^{t_{\S}}(F^{n-1}_{t_{\S}}\Omega^{1}(t_{\S}(M)))=0$, that is,
$t_{\S}(F^{n-1}_{t_{\S}}\Omega^{1}(t_{\S}(M)))=0$.
Then by \cite[Proposition 5.3]{HLM2}, we have $\pd F^{n-1}_{t_{\S}}\Omega^{1}(t_{\S}(M)) \leq \alpha$.

We have the following
$$\Omega^{\alpha+2}(M)\cong \Omega^{\alpha+2}(t_{\S}(M)),$$
$$\Omega^{\alpha+2}(t_{\S}(M))=\Omega^{\alpha+1}(\Omega^{1} (t_{\S}(M)))\cong\Omega^{\alpha+1}( t_{\S}(\Omega^{1} (t_{\S}(M)))), $$
$$0 \rightarrow \Omega^{\alpha+1}(F_{t_{\S}}(\Omega^{1} (t_{\S}(M))))\rightarrow \Omega^{\alpha+1}(t_{\S}(\Omega^{1} (t_{\S}(M))))\oplus P_{1}\rightarrow
\Omega^{\alpha+1}(\top t_{\S}(\Omega^{1} (t_{\S}(M)))) \rightarrow 0,\ \text{(exact)}$$
$$\Omega^{\alpha+1}(F_{t_{\S}}(\Omega^{1} (t_{\S}(M))))\cong \Omega^{\alpha+1}( t_{\S}F_{t_{\S}}(\Omega^{1} (t_{\S}(M)))), $$
$$0 \rightarrow \Omega^{\alpha+1}(F^{2}_{t_{\S}}(\Omega^{1} (t_{\S}(M))))\rightarrow \Omega^{\alpha+1}(t_{\S}F_{t_{\S}}(\Omega^{1} (t_{\S}(M)))) \oplus P_{2}
\rightarrow \Omega^{\alpha+1}(\top t_{\S}F_{t_{\S}}(\Omega^{1} (t_{\S}(M)))) \rightarrow 0,\ \text{(exact)}$$
$$\cdots \cdots \cdots \cdots \notag$$
$$\Omega^{\alpha+1}(F^{n-2}_{t_{\S}}(\Omega^{1} (t_{\S}(M))))\cong\Omega^{\alpha+1}( t_{\S}F^{n-2}_{t_{\S}}(\Omega^{1} (t_{\S}(M)))), $$
$$\Omega^{\alpha+1}(t_{\S}F^{n-2}_{t_{\S}}(\Omega^{1} (t_{\S}(M))))\oplus P_{n-1}\cong \Omega^{\alpha+1}( \top t_{\S}F^{n-2}_{t_{\S}}(\Omega^{1} (t_{\S}(M)))),$$
where all $P_{i}$ are projective in $\mod \La$; we also have the following
$$\Omega^{-\alpha-2}(\Omega^{\alpha+2}(M))\cong
\Omega^{-\alpha-2}(\Omega^{\alpha+2}(t_{\S}(M)))=\Omega^{-\alpha-2}(\Omega^{\alpha+1}(\Omega^{1} (t_{\S}(M))))
\cong\Omega^{-\alpha-2}(\Omega^{\alpha+1}( t_{\S}(\Omega^{1} (t_{\S}(M))))),$$
$$0 \rightarrow \Omega^{-\alpha-2}(\Omega^{\alpha+1}(F_{t_{\S}}(\Omega^{1} (t_{\S}(M)))))\rightarrow
\Omega^{-\alpha-2}(\Omega^{\alpha+1}(t_{\S}(\Omega^{1} (t_{\S}(M)))))\oplus \Omega^{-\alpha-2}(P_{1})\oplus E_{1}$$
$$\rightarrow
\Omega^{-\alpha-2}(\Omega^{\alpha+1}(\top t_{\S}(\Omega^{1} (t_{\S}(M))))) \rightarrow 0,\ \text{(exact)}$$
$$\Omega^{-\alpha-2}(\Omega^{\alpha+1}(F_{t_{\S}}(\Omega^{1} (t_{\S}(M)))))\cong
\Omega^{-\alpha-2}(\Omega^{\alpha+1}( t_{\S}F_{t_{\S}}(\Omega^{1} (t_{\S}(M))))), $$
$$0 \rightarrow \Omega^{-\alpha-2}(\Omega^{\alpha+1}(F^{2}_{t_{\S}}(\Omega^{1} (t_{\S}(M)))))\rightarrow
\Omega^{-\alpha-2}(\Omega^{\alpha+1}(t_{\S}F_{t_{\S}}(\Omega^{1} (t_{\S}(M))))) \oplus  \Omega^{-\alpha-2}(P_{2})\oplus E_{2}
$$$$\rightarrow
\Omega^{-\alpha-2}(\Omega^{\alpha+1}(\top t_{\S}F_{t_{\S}}(\Omega^{1} (t_{\S}(M))))) \rightarrow 0,\ \text{(exact)}$$
$$\cdots \cdots \cdots \cdots \notag$$
$$\Omega^{-\alpha-2}(\Omega^{\alpha+1}(F^{n-2}_{t_{\S}}(\Omega^{1} (t_{\S}(M)))))
\cong\Omega^{-\alpha-2}(\Omega^{\alpha+1}( t_{\S}F^{n-2}_{t_{\S}}(\Omega^{1} (t_{\S}(M))))), $$
$$\Omega^{-\alpha-2}(\Omega^{\alpha+1}(t_{\S}F^{n-2}_{t_{\S}}(\Omega^{1} (t_{\S}(M)))))\oplus \Omega^{-\alpha-2}(P_{n-1})\cong
\Omega^{-\alpha-2}(\Omega^{\alpha+1}( \top t_{\S}F^{n-2}_{t_{\S}}(\Omega^{1} (t_{\S}(M))))),$$
where all $E_{i}$ are injective in $\mod \La$. So
\begin{align*}
&\Omega^{-\alpha-2}(\Omega^{\alpha+2}(M))\\
\cong&\Omega^{-\alpha-2}(\Omega^{\alpha+1}( t_{\S}\Omega^{1}(t_{\S}(M)))) \\
\in& \langle \Omega^{-\alpha-2}(\Omega^{\alpha+1}  ( F_{t_{\S}}\Omega^{1}(t_{\S}(M)))) \rangle_{1} \diamond
\langle \Omega^{-\alpha-2}(\Omega^{\alpha+1}  ( \top t_{\S}\Omega^{1}(t_{\S}(M)))) \rangle_{1}\\
\subseteq& \langle \Omega^{-\alpha-2}(\Omega^{\alpha+1}  ( F_{t_{\S}}\Omega^{1}(t_{\S}(M)))) \rangle_{1} \diamond
\langle \Omega^{-\alpha-2}(\Omega^{\alpha+1}  ( \Lambda/\rad\La)) \rangle_{1}\\
=& \langle \Omega^{-\alpha-2}(\Omega^{\alpha+1}  (t_{\S} F_{t_{\S}}\Omega^{1}(t_{\S}(M)))) \rangle_{1} \diamond
\langle \Omega^{-\alpha-2}(\Omega^{\alpha+1}  ( \Lambda/\rad\La)) \rangle_{1}\\
\subseteq& \langle \Omega^{-\alpha-2}(\Omega^{\alpha+1}(F^{2}_{t_{\S}}\Omega^{1}(t_{\S}(M))))\rangle_{1} \diamond
\langle \Omega^{-\alpha-2}(\Omega^{\alpha+1}  ( \Lambda/\rad\La)) \rangle_{1} \diamond  \langle
\Omega^{-\alpha-2}(\Omega^{\alpha+1} ( \Lambda/\rad\La)) \rangle_{1}\\
&\;\;\;\;\vdots\\
\subseteq& \langle \Omega^{-\alpha-2}(\Omega^{\alpha+1}(F^{n-2}_{t_{\S}}\Omega^{1}(t_{\S}(M))))\rangle_{1} \diamond
\underbrace{\langle \Omega^{-\alpha-2}(\Omega^{\alpha+1}  ( \Lambda/\rad\La)) \rangle_{1}\diamond \cdots \diamond
\langle \Omega^{-\alpha-2}(\Omega^{\alpha+1}  ( \Lambda/\rad\La)) \rangle_{1}}_{n-2}\\
=& \langle \Omega^{-\alpha-2}(\Omega^{\alpha+1}(F^{n-2}_{t_{\S}}\Omega^{1}(t_{\S}(M))))\rangle_{1} \diamond
\langle \Omega^{-\alpha-2}(\Omega^{\alpha+1}  ( \Lambda/\rad\La))  \rangle_{n-2}\\
\subseteq& \langle \Omega^{-\alpha-2}(\Omega^{\alpha+1}(t_{\S}F^{n-2}_{t_{\S}}\Omega^{1}(t_{\S}(M)))
 \oplus P_{n-1})\rangle_{1} \diamond  \langle \Omega^{-\alpha-2}(\Omega^{\alpha+1} (\Lambda/\rad\La)) \rangle_{n-2}\\
=& \langle  \Omega^{-\alpha-2}(\Omega^{\alpha+1}(\top
t_{\S}F^{n-2}_{t_{\S}}\Omega^{1}(t_{\S}(M))))\rangle_{1} \diamond
\langle \Omega^{-\alpha-2}(\Omega^{\alpha+1} (\Lambda/\rad\La)) \rangle_{n-2}\\
\subseteq& \langle \Omega^{-\alpha-2}(\Omega^{\alpha+1}  ( \Lambda/\rad\La))\rangle_{1} \diamond  \langle
\Omega^{-\alpha-2}(\Omega^{\alpha+1}  ( \Lambda/\rad\La))  \rangle_{n-2}\\
=& \langle \Omega^{-\alpha-2}(\Omega^{\alpha+1}  ( \Lambda/\rad\La))  \rangle_{n-1},
\end{align*}
and hence
\begin{align*}
M\in &\langle\oplus_{i=0}^{-\alpha-1} \Omega^{i}(\Lambda) \rangle_{\alpha+2}
\diamond\langle \Omega^{-\alpha-2}(\Omega^{\alpha+2}(M))\rangle_{1}\\
\subseteq&\langle\oplus_{i=0}^{-\alpha-1} \Omega^{i}(\Lambda) \rangle_{\alpha+2}
\diamond\langle \Omega^{-\alpha-2}(\Omega^{\alpha+1}  ( \Lambda/\rad\La))  \rangle_{n-1}\\
\subseteq&\langle(\oplus_{i=0}^{-\alpha-1} \Omega^{i}(\Lambda) )
\oplus\Omega^{-\alpha-2}(\Omega^{\alpha+1}  ( \Lambda/\rad\La))  \rangle_{\alpha+1+n}.\ \ \ \ \text{(by Corollary \ref{cor-2.3}(1))}
\end{align*}
It follows that
$$\mod \La=\langle(\oplus_{i=0}^{-\alpha-1} \Omega^{i}(\Lambda) )
\oplus\Omega^{-\alpha-2}(\Omega^{\alpha+1} (\Lambda/\rad\La))  \rangle_{\alpha+1+n}$$
and $\dim \La \leq \alpha+n$.
\end{proof}

As an application of Theorem \ref{thm-3.19}, we have the following

\begin{corollary}\label{cor-3.20}
\begin{enumerate}
\item[]
\item[(1)] {\rm (\cite[Example 1.6(ii)]{BA08S})} $\dim\mod\La\leq\LL(\La)-1$;
\item[(2)] {\rm (cf. Corollary \ref{cor-3.6} and \cite[4.5.1(3)]{Iya03R})} $\dim\mod\La\leq\gldim \La$.
\end{enumerate}
\end{corollary}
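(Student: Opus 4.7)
The plan is to obtain both inequalities as immediate consequences of Theorem \ref{thm-3.19} by choosing the subset $\S$ of $\S^{<\infty}$ at the two extremes: $\S=\emptyset$ for part (1) and $\S=\S^{<\infty}$ (assuming $\gldim\La<\infty$) for part (2). In each case the task reduces to identifying what the torsion radical $t_{\S}$ and the layer length $\ell\ell^{t_{\S}}(\La)$ become.

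For (1), I take $\S=\emptyset$. By the convention preceding Theorem \ref{thm-3.19}, $\pd\S=-1$. Since no simple module appears as a composition factor of an object of $\mathfrak{F}(\S)$, we have $\mathfrak{F}(\S)=\{0\}$, whence the characterization $\T_{\S}=\{M\in\mod\La\mid t_{\S}(M)=M\}$ combined with the orthogonality of the torsion pair forces $\T_{\S}=\mod\La$ and thus $t_{\S}=\Id_{\mod\La}$. Consequently $F_{t_{\S}}=\rad\circ\Id_{\mod\La}=\rad$, so
\[\ell\ell^{t_{\S}}(\La)=\inf\{i\geq 0\mid \rad^{i}(\La)=0\}=\LL(\La).\]
Plugging into Theorem \ref{thm-3.19} yields $\dim\mod\La\leq -1+\LL(\La)=\LL(\La)-1$.

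For (2), if $\gldim\La=\infty$ the inequality is vacuous, so assume $\gldim\La<\infty$. Then every simple $\La$-module lies in $\S^{<\infty}$, and I take $\S=\S^{<\infty}$. In this case $\pd\S=\gldim\La$ and every module in $\mod\La$ admits a composition series built from modules of $\S$, so $\mathfrak{F}(\S)=\mod\La$. From the characterization $\mathfrak{F}(\S)=\{M\mid t_{\S}(M)=0\}$ we deduce $t_{\S}=0$, and hence $\ell\ell^{t_{\S}}(\La)=\inf\{i\geq 0\mid 0=0\}=0$. Theorem \ref{thm-3.19} then gives $\dim\mod\La\leq\gldim\La+0=\gldim\La$.

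There is essentially no technical obstacle: the whole argument is a bookkeeping check at the two boundary values of $\S$. The only point requiring a little care is to verify that at $\S=\emptyset$ one indeed obtains $t_{\S}=\Id$ (so that the iterated composition $t_{\S}\circ F_{t_{\S}}^{i}$ reduces to the familiar $\rad^{i}$ governing the Loewy length), and at $\S=\S^{<\infty}$ one indeed obtains $t_{\S}=0$; both follow directly from the torsion-pair description of $\T_{\S}$ and $\mathfrak{F}(\S)$ recorded just before Theorem \ref{thm-3.19}.
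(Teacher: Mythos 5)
Your proposal is correct and follows essentially the same route as the paper: choose $\S=\emptyset$ and $\S=\S^{<\infty}$ as the boundary cases and feed them into Theorem \ref{thm-3.19}. The only cosmetic differences are that you derive $t_{\S}=\Id$ and $t_{\S}=0$ directly from the torsion-pair characterizations rather than citing \cite[Propositions 5.3 and 5.9(a)]{HLM2} as the paper does, and you explicitly dispose of the case $\gldim\La=\infty$ in (2), which the paper leaves implicit.
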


\begin{proof}
(1) Let $\S=\emptyset$. Then $\pd \mathcal{S}=-1$ and the torsion pair $(\T_{\S}, \mathfrak{F}(\S))=(\mod \La, 0)$.
By \cite[Propposition 5.9(a)]{HLM2}, we have $t_{\S}(\La)=\La$ and $\ell\ell^{t_{\S}}(\La)=\LL(\La)$.
It follows from Theorem \ref{thm-3.19} that $\dim\mod\La\leq\LL(\La)-1$.

(2) Let $\S=\S^{<\infty}=\{\text{all simple modules in} \mod\La\}$. Then $\pd \mathcal{S}=\gldim\La$
and the torsion pair $(\T_{\S}, \mathfrak{F}(\S))=(0,\mod \La)$.
By \cite[Propposition 5.3]{HLM2}, we have $t_{\S}(\La)=0$ and
$\ell\ell^{t_{\S}}(\La)=0$. It follows from Theorem \ref{thm-3.19} that $\dim\mod\La\leq\gldim\La$.
\end{proof}

By choosing some suitable $\mathcal{S}$ and applying Theorem \ref{thm-3.19},
we may obtain more precise upper bounds for $\dim \mod \La$
than that in Corollary \ref{cor-3.20}.

\begin{example}\label{ex-3.21}
{\rm
Consider the bound quiver algebra $\Lambda=kQ/I$, where $k$ is a field and $Q$ is given by
$$\xymatrix{
&2n+1\\
{2n}&1\ar[l]^{\alpha_{2n}}\ar[u]^{\alpha_{2n+1}}\ar[r]^{\alpha_{1}} \ar[d]^{\alpha_{n+1}}
&2\ar[r]^{\alpha_{2}} &{3}\ar[r]^{\alpha_{3}}&\cdots \ar[r]^{\alpha_{n-1}}&{n}\\
&n+1\ar[r]^{\alpha_{n+2}}
&n+2\ar[r]^{\alpha_{n+3}}&n+3\ar[r]^{\alpha_{n+4}}&\cdots\ar[r]^{\alpha_{2n-1}}&2n-1
}$$
and $I$ is generated by
$\{\alpha_{i}\alpha_{i+1}\;| \;n+1\leq i\leq 2n-2\}$ with $n\geq 5$.
Then the indecomposable projective $\La$-modules are
$$\xymatrix@-1.0pc@C=0.1pt
{& &1\edge[d]\edge[ld]\edge[rd]\edge[rrd]&&&&   & 2\edge[d]  &&&&  & &&&&&   &&& &\\
&n+1  &2\edge[d]  &2n&2n+1&&  &3\edge[d]  &&&& & 3\edge[d] &&&& &j\edge[d] &&&  & &\\
P(1)=& &3\edge[d]   &&&&P(2)=&4\edge[d]  &&&&P(3)=&4\edge[d] &&&&P(j)=&j+1,&&&&P(l)=l,&\\
& &\vdots\edge[d]&&&&   &\vdots\edge[d]&&&& &\vdots\edge[d]&&&&  & &&& &\\
&&n,&&&&  &n,   &&&& &n,  &&&& &  &&&  &\\
&&  &&&&  &   &&&& &  &&&&  &  &&&  &\\
}$$
where $n+1 \leq j\leq 2n-2$, $2n-1 \leq l\leq 2n+1$ and $P(i+1)=\rad P(i)$
for any $2 \leq i\leq n-1$.

We have
\begin{equation*}
\pd S(i)=
\begin{cases}
n-1, &\text{if}\;\;i=1;\\
1,&\text{if} \;\;2 \leq  i\leq n-1;\\
0,&\text{if} \;\; i=n, 2n, 2n+1;\\
2n-1-i,&\text{if}\;\; n+1 \leq  i\leq 2n-1.
\end{cases}
\end{equation*}
So $\S^{<\infty}=\{$all simple modules in $\mod \La\}$.
Let $\S:=\{S(i)\mid 2\leq i \leq n\}(\subseteq\mathcal{S}^{<\infty})$
and $\S'$ be all the others simple modules in $\mod \La$. Then
$\pd\S=1$ and $\S'=\{ S(i)\mid i=1\text{ or }n+1 \leq i \leq 2n+1 \}$.
Because $\La=\oplus_{i=1}^{2n+1}P(i)$, we have
$$\ell\ell^{t_{\S}}(\La)=\max\{\ell\ell^{t_{\S}}(P(i)) \mid 1 \leq i  \leq 2n+1\}$$
by \cite[Lemma 3.4(a)]{HLM2}.

In order to compute $\ell\ell^{t_{\S}}(P(1))$, we need to find the least non-negative integer $i$
such that $t_{\S}F_{t_{\S}}^{i}(P(1))=0$.
Since $\top P(1)=S(1)\in \add \S'$, we have $t_{\S}(P(1))=P(1)$ by \cite[Proposition 5.9(a)]{HLM2}.
Thus
\begin{align*}\xymatrix@-1.0pc@C=0.1pt{
&F_{t_{\S}}(P(1))=\rad t_{\S}(P(1))=\rad (P(1))=S(n+1)\oplus P(2)\oplus S(2n) \oplus S(2n+1).\\
}\end{align*}
Since $\top S(n+1)=S(n+1)\in \add \mathcal{S}'$, we have $t_{\S}(S(n+1))=S(n+1)$ by \cite[Proposition 5.9(a)]{HLM2}.
Similarly, $t_{\S}(S(2n))=S(2n)$ and $t_{\S}(S(2n+1))=S(2n+1)$. Since
$P(2)\in  \mathfrak{F}(\S)$, we have $t_{\S}(P(2))=0$ by \cite[Proposition 5.3]{HLM2}. So
$$t_{\S}F_{t_{\S}}(P(1))=t_{\S}(S(n+1)\oplus P(2)\oplus S(2n) \oplus S(2n+1))
=S(n+1)\oplus S(2n) \oplus S(2n+1).$$
It follows that $$F_{t_{\S}}^{2}(P(1))=\rad t_{\S}F_{t_{\S}}(P(1))
=\rad(S(n+1)\oplus S(2n) \oplus S(2n+1))=0$$
and $t_{\S}F_{t_{\S}}^{2}(P(1))=0$, which implies $\ell\ell^{t_{\S}}(P(1))=2$.
Similarly, we have
\begin{equation*}
\ell\ell^{t_{\S}}(P(i))=
\begin{cases}
0,&\text{if} \;\;2\leq  i\leq n;\\
2,&\text{if} \;\;n+1\leq  i\leq 2n-2;\\
1,&\text{if} \;\;2n-1\leq  i\leq 2n+1.
\end{cases}
\end{equation*}
Consequently, we conclude that $\ell\ell^{t_{\S}}(\La)=\max\{\ell\ell^{t_{\S}}(P(i))\mid 1 \leq i  \leq 2n+1\}=2$.

(1) Because $\LL(\La)=n$ and $\gldim \La=n-1$, we have
$$\dim \mod \La \leq \min\{ \gldim \La, \LL(\La)-1\}=n-1$$ by Corollary \ref{cor-3.20}.

(2) By Theorem \ref{thm-3.19}, we have
$$\dim \mod \La \leq \pd \S+\ell\ell^{t_{\S}}(\La)=1+2=3.$$
The upper bound here is better than that in (1) since $n\geq 5$.}
\end{example}

\section{Ring extensions}


Let $\Lambda$ be a subring of a ring $\Gamma$ such that $\Lambda$ and $\Gamma$ have the same identity. Then $A$ is
called a {\bf ring extension} of $\Lambda$, and denoted by $\Gamma \geq \Lambda$.

\begin{definition}\label{4.1}
{\rm A ring extension $\Gamma \geq \Lambda$ is called
\begin{itemize}
\item[(1)] {\rm (\cite{HZYSJX12I})}
a {\bf weak excellent extension} if
\begin{itemize}
\item[(1.1)] $\Gamma$ is {\bf $\Lambda$-projective} (\cite{P77}); that is, for a submodule $N_{\Gamma}$ of $M_{\Gamma}$, if $N_{\Lambda}$ is a direct
summand of $M_{\Lambda}$, denoted by $N_{\Lambda} \mid M_{\Lambda}$, then $N_{\Gamma} \mid M_{\Gamma}$;
\item[(1.2)] $\Gamma$ is a {\bf finite extension} of $\Lambda$; that is, there exists a finite set $\{\gamma_1,\cdots,\gamma_n\}$ in $\Gamma$ such that
$\Gamma=\sum_{i=1}^{n}\gamma_i\Lambda$;
\item[(1.3)] $\Gamma_{\Lambda}$ is flat and $_{\Lambda}\Gamma$ is projective;
\end{itemize}
\item[(2)] (\cite{P77,B84}) an {\bf excellent extension} if it is a weak excellent extension
and $\Gamma_{\Lambda}$ and $_{\Lambda}\Gamma$ are free with a common basis
$\{\gamma_1, \cdots, \gamma_n\}$, such that $\Lambda \gamma_{i}=\gamma_{i}\Lambda$ for any $1\leq i\leq n$.
\item[(3)] {\rm (\cite{XCC04O})}
a {\bf left idealized extension} if $\rad \Lambda$ is a left ideal of $\Gamma$.
\end{itemize}}
\end{definition}





We have the following

\begin{theorem}\label{thm-4.2}
Let $\Gamma\supseteq \Lambda$ be artin algebras. Then we have
\begin{itemize}
\item[(1)] $\dim \mod \Lambda\geq\dim \mod \Gamma$ if $\Gamma \geq \Lambda$ is a weak excellent extension,
and $\dim \mod \Lambda=\dim \mod \Gamma$ if $\Gamma \geq \Lambda$ is an excellent extension;
\item[(2)] $\dim \mod \Lambda\leq \dim \mod \Gamma+2$ if $\Gamma \geq \Lambda$ is a left idealized extension.
\end{itemize}
\end{theorem}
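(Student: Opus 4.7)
The plan is to exploit the induction functor $F:=-\otimes_\Lambda\Gamma\colon\mod\Lambda\to\mod\Gamma$ and the restriction functor $G\colon\mod\Gamma\to\mod\Lambda$, together with Lemma \ref{lem-2.4} which transports $\langle-\rangle_n$-filtrations through exact functors.

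For part (1), under a weak excellent extension $F$ is exact (since $_\Lambda\Gamma$ is projective), $G$ is always exact, and $G$ lands in $\mod\Lambda$ because $\Gamma$ is a finite extension of $\Lambda$. Writing $\mod\Lambda=\langle T\rangle_{n+1}$ with $n=\dim\mod\Lambda$, for any $M\in\mod\Gamma$ the natural map $\varepsilon_M\colon G(M)\otimes_\Lambda\Gamma\twoheadrightarrow M$, $m\otimes\gamma\mapsto m\gamma$, is a $\Gamma$-epimorphism splitting over $\Lambda$ via $m\mapsto m\otimes 1$; the $\Lambda$-projective property of $\Gamma$ upgrades this to a $\Gamma$-splitting, so $M$ is a direct summand of $FG(M)$ in $\mod\Gamma$. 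Combining $G(M)\in\langle T\rangle_{n+1}$ with Lemma \ref{lem-2.4} gives $FG(M)\in\langle F(T)\rangle_{n+1}$, whence $M\in\langle F(T)\rangle_{n+1}$ and $\dim\mod\Gamma\le n$. For the excellent case, the reverse inequality proceeds dually: writing $\mod\Gamma=\langle U\rangle_{m+1}$, the freeness of $\Gamma_\Lambda$ on a common basis of size $k$ gives $GF(N)\cong N^k$ in $\mod\Lambda$ for each $N\in\mod\Lambda$, while Lemma \ref{lem-2.4} places $GF(N)$ inside $\langle G(U)\rangle_{m+1}$; as $N$ is a direct summand of $N^k$, we conclude $N\in\langle G(U)\rangle_{m+1}$, whence $\dim\mod\Lambda\le m$.

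For part (2), let $d=\dim\mod\Gamma$ with $\mod\Gamma=\langle U\rangle_{d+1}$, and set $V:=G(U)\oplus\Lambda/\rad\Lambda$. Given $M\in\mod\Lambda$, I begin with the short exact sequence
\[0\longrightarrow M\rad\Lambda\longrightarrow M\longrightarrow M/M\rad\Lambda\longrightarrow 0\]
in $\mod\Lambda$, whose quotient is semisimple over $\Lambda/\rad\Lambda$ and so lies in $\langle\Lambda/\rad\Lambda\rangle_1\subseteq\langle V\rangle_1$. The crucial step uses the hypothesis that $\rad\Lambda$ is a left ideal of $\Gamma$---which endows $\rad\Lambda$ with the structure of a $(\Gamma,\Lambda)$-bimodule---to produce a module $N_M\in\mod\Gamma$ together with a short exact sequence in $\mod\Lambda$ comparing $M\rad\Lambda$ to $G(N_M)$ whose error term is semisimple (hence in $\langle\Lambda/\rad\Lambda\rangle_1$). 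Since $G(N_M)\in\langle G(U)\rangle_{d+1}$ by Lemma \ref{lem-2.4}, this places $M\rad\Lambda\in\langle V\rangle_{d+2}$, and chaining with the displayed sequence gives $M\in\langle V\rangle_1\diamond\langle V\rangle_{d+2}=\langle V\rangle_{d+3}$, yielding $\dim\mod\Lambda\le d+2$.

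The principal obstacle is carrying out the crucial step in part (2). Because the left-ideal hypothesis is one-sided while the category consists of \emph{right} modules, $M\rad\Lambda$ is not itself a $\Gamma$-submodule of any natural $\Gamma$-module built from $M$, so the comparison module $N_M$ must be assembled indirectly using the $(\Gamma,\Lambda)$-bimodule structure on $\rad\Lambda$ (for instance via $\Hom_\Lambda(\rad\Lambda,-)$ or an induction-type construction involving $\rad\Lambda$ or $\Gamma$ as bimodules), followed by a careful verification that the kernel and cokernel of the resulting comparison map are annihilated by $\rad\Lambda$ and hence semisimple. It is precisely this additional bookkeeping---and the one extra extension layer that it forces---that accounts for the ``$+2$'' in the bound $\dim\mod\Lambda\le\dim\mod\Gamma+2$, in contrast to the cleaner equality obtained in the excellent case.
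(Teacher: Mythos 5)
Part (1) of your proposal matches the paper's argument: in both directions the key facts are that restriction and the exact induction $-\otimes_\Lambda\Gamma$ transport $\langle-\rangle_{n+1}$ via Lemma \ref{lem-2.4}, and that a module is a direct summand of its induced-then-restricted (resp.\ restricted-then-induced) image. One small imprecision: in the excellent case, $GF(N)=(N\otimes_\Lambda\Gamma)_\Lambda$ is not in general isomorphic to $N^k$; since $\Lambda\gamma_i=\gamma_i\Lambda$, each summand $N\otimes_\Lambda\Lambda\gamma_i$ is $N$ twisted by an automorphism of $\Lambda$ and need not be $\Lambda$-isomorphic to $N$. What is actually needed, and what does hold (take $\gamma_1=1$), is only that $N_\Lambda$ is a direct summand of $(N\otimes_\Lambda\Gamma)_\Lambda$, which is exactly what the paper invokes.

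Part (2) has a genuine gap. You have correctly located the difficulty --- one must compare some piece of $M$ to a module already defined over $\Gamma$ --- but you do not produce the comparison module $N_M$ nor the comparison sequence, and the route through $M\rad\Lambda$ does not close as stated: the hypothesis $\Gamma\cdot\rad\Lambda\subseteq\rad\Lambda$ does not straightforwardly equip the \emph{right} $\Lambda$-module $M\rad\Lambda$ with a compatible right $\Gamma$-action, and there is no evident natural map $M\rad\Lambda\to G(N_M)$ with semisimple kernel and cokernel. The paper avoids all of this by first invoking Theorem \ref{thm-3.5} to replace $\dim$ with $\wresoldim$, and then appealing to the cited result of Xi (Lemma 0.2 of \cite{XCC05E}): under the idealized-extension hypothesis, the \emph{second syzygy} $\Omega^2_\Lambda(X)$ carries a natural $\Gamma$-module structure. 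Since $\wresoldim\mod\Gamma=n$, one has a length-$n$ resolution of $\Omega^2_\Lambda(X)$ by objects of $\add V_\Gamma$; restricting it to $\mod\Lambda$ and splicing in the two projective cover steps for $X$ yields a length-$(n+2)$ resolution of $X$ by $\add(V_\Lambda\oplus\Lambda)$, so $\wresoldim\mod\Lambda\le n+2$ and hence $\dim\mod\Lambda\le n+2$. The missing ingredient in your write-up is exactly this $\Gamma$-structure on $\Omega^2_\Lambda(X)$, and the smooth way to exploit it is to pass through weak resolution dimension rather than to chase $\langle-\rangle_n$-filtrations directly.
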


\begin{proof}
(1) Let $\Gamma \geq \Lambda$ be a weak excellent extension
and $\dim\mod \Lambda=n$ and $T\in\mod \Lambda$ such that $\mod \Lambda=\langle T \rangle_{n+1}$.
Let $X\in\mod \Gamma\subseteq\mod \Lambda$. Since ${_{\Lambda}\Gamma}$ is projective, $-\otimes_{\Lambda}\Gamma$ is exact. So we have
$X\otimes_{\Lambda}\Gamma\in\langle (T\otimes_{\Lambda}\Gamma)_\Gamma\rangle_{n+1}$ by Lemma \ref{lem-2.4}.
Since $X_{\Gamma}\mid(X\otimes_{\Lambda}\Gamma)_{\Gamma}$ by \cite[Lemma 1.1]{XWM96O},
we have $X_{\Gamma}\in \langle (T\otimes_{\Lambda}\Gamma)_{\Gamma}\rangle_{n+1}$.
Thus $\mod \Gamma= \langle (T\otimes_{\Lambda}\Gamma)_{\Gamma}\rangle_{n+1}$ and $\dim \mod \Gamma \leq n$.

Now let $\Gamma \geq \Lambda$ be an excellent extension and
$\dim\mod \Gamma=n$ and $S\in\mod \Gamma\subseteq\mod \Lambda$ such that $\mod \Gamma=\langle S \rangle_{n+1}$.
Let $X_{\Lambda}\in\mod \Lambda$. Then there exists an exact sequence
$$0 \longrightarrow X_{1}\longrightarrow X\otimes_{\Lambda}\Gamma  \longrightarrow X_{2}\longrightarrow 0$$
in $\mod \Gamma$ with $X_{1}\in\langle S_{\Gamma}\rangle_{1}$ and $X_{2}\in\langle S_{\Gamma}\rangle_{n}$.
Note that it is also an exact sequence in $\mod \Lambda$. So $(X\otimes_{\Lambda}\Gamma)_{\Lambda}\in \langle S_{\Lambda}\rangle_{n+1}$.
Since $X_{\Lambda}\mid(X\otimes_{\Lambda}\Gamma)_{\Lambda}$, we have $X_{\Lambda}\in \langle S_{\Lambda}\rangle_{n+1}$. Thus
$\mod \Lambda =\langle S_{\Lambda}\rangle_{n+1}$ and $\dim\mod \Lambda \leq n$.

(2) Let $\dim \mod \Gamma=n$. Then $\wresoldim \mod \Gamma=n$ by Theorem \ref{thm-3.5}. Let $X\in\mod \Lambda$.
Since $\Omega^{2}_{\Lambda}(X)$ can be viewed as an $\Gamma$-module by \cite[Lemma 0.2]{XCC05E}, there exists $V\in\mod \Gamma \subseteq\mod \Lambda$
such that there is an exact sequence
$$0 \longrightarrow V_{n} \longrightarrow V_{n-1} \longrightarrow\cdots
\longrightarrow V_{1 }
\longrightarrow V_{ 0} \longrightarrow \Omega^{2}_{\Lambda}(X)\longrightarrow 0 $$
in $\mod \Gamma$ with all $V_{i}$ in $\add V_\Gamma$. It is also an exact sequence in $\mod \Lambda$. So
$(V_{\Lambda}\oplus \Lambda)$-$\wresoldim\mod \Lambda\leq n+2$ and $\wresoldim \mod \Lambda \leq n+2$.
Thus $\dim \mod \Lambda \leq n+2$ by Theorem \ref{thm-3.5}.
\end{proof}

In the following, we list some examples of (weak) excellent extensions, in which Theorem \ref{thm-4.2}(1) may be applied.

\begin{example} {\rm (\cite{P77,B84,HZYSJX13E,X94})\label{ex-4.3}
\begin{enumerate}
\item[(1)] For a ring $\Lambda$, $M_n(\Lambda)$ (the matrix ring of $\Lambda$
of degree $n$) is an excellent extension of $\Lambda$.
\item[(2)] Let $\Lambda$ be a ring and $G$ a finite group. If $|G|^{-1}\in \Lambda$, then the skew group ring $\Lambda *G$
is an excellent extension of $\Lambda$.
\item[(3)] Let $\Lambda$ be a finite-dimensional algebra over a field $k$, and let $F$ be a finite separable field extension of $k$.
Then $\Lambda\otimes_{k}F$ is an excellent extension of $\Lambda$.
\item[(4)] Let $k$ be a field, and let $G$ be a group and $H$ a normal subgroup of $G$.
If $[G:H]$ is finite and is not zero in $k$, then $kG$ is an excellent extension of $kH$.
\item[(5)] Let $k$ be a field of charactertistic $p$, and let $G$ a finite group and $H$ a normal subgroup of $G$.
If $H$ contains a Sylow $p$-subgroup of $G$, then $kG$ is an excellent extension of $kH$.
\item[(6)] Let $k$ be a field and $G$ a finite group. If $G$ acts on $k$ (as field automorphisms) with kernel $H$.
Then the skew group ring $k*G$ is an excellent extension of the group ring $kH$, and the center
$Z(kH)$ of $kH$ is an excellent extension of the center $Z(k*G)$ of $k*G$.
\item[(7)] Let $H$ be a finite-dimensional semisimple Hopf algebra over a field $k$ and $\Lambda$ a twisted $H$-module algebra.
Then for any cocycle $\sigma\in \Hom_{k}(H \otimes H, \Lambda)$, the crossed product algebra $\Lambda\#_{\sigma}H$ is a weak excellent
extension of $\Lambda$, but not an excellent extension of $\Lambda$ in general.
\item[(8)] Recall from \cite{PR04} that a ring $\Lambda$ is called a {\bf right $S$-ring} if any flat module in $\mod \Lambda$ is projective.
The class of right $S$-rings includes semiperfect rings, commutative semilocal rings, subrings of right noetherian rings, subrings
of right $S$-rings, right Ore domains, right nonsingular ring of finite right Goldie dimension, endomorphism rings of right artinian
modules and rings with right Krull dimension (\cite{FHR03,PR04}). Let $\Gamma \geq \Lambda$ be an excellent extension with $\Lambda$ a right $S$-ring.
If $\Gamma$ has two ideals $I$ and $J$ such that $\Lambda\cap I=0$ and $\Gamma=I\oplus J$,
then the canonical embedding $\Lambda\hookrightarrow \Gamma/I$ is a weak excellent extension; and it is not an excellent extension
if $J_{\Lambda}$ is not free.
\end{enumerate}}
\end{example}

We recall from \cite{ML11F} the separable equivalence of artin algebras, which includes
the derived equivalence of self-injective algebras, Morita equivalence and stable equivalence
(of Morita type) (\cite{ML11F,PS17S}).

\begin{definition}{\rm (\cite{ML11F})}\label{def-4.4}
{\rm Two artin algebras $\Lambda$ and $\Gamma$ are called {\bf separably equivalent} if there exist $_{\Gamma}M_{\Lambda}$
and $_{\Lambda}N_{\Gamma}$ such that
\begin{itemize}
\item[(1)] $M$ and $N$ are both finitely generated projective as one sided modules;
\item[(2)] $M\otimes_{\Lambda}N\cong \Gamma\oplus X$ as a $(\Gamma,\Gamma)$-bimodule for some $_{\Gamma}X_{\Gamma}$;
\item[(3)] $N\otimes_{\Gamma}M\cong \Lambda\oplus Y$ as a $(\Lambda,\Lambda)$-bimodule for some $_{\Lambda}Y_{\Lambda}$.
\end{itemize}}
\end{definition}

We have the following

\begin{theorem}\label{thm-4.5}
Let $\Lambda$ and $\Gamma$ be artin algebras. If they are separably equivalent, then $\dim \mod \Lambda=\dim \mod \Gamma$.
\end{theorem}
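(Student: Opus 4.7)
The plan is to exploit the fact that separable equivalence provides a pair of exact functors $-\otimes_{\Lambda}N:\mod\Lambda\to\mod\Gamma$ and $-\otimes_{\Gamma}M:\mod\Gamma\to\mod\Lambda$ whose composition, in either order, contains the identity functor as a direct summand. Since $M_{\Lambda}$ and $N_{\Gamma}$ are finitely generated projective (by Definition \ref{def-4.4}(1)), both functors are exact and land in the finitely generated module categories, so Lemma \ref{lem-2.4} is applicable to each of them.

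By symmetry, it suffices to prove $\dim\mod\Lambda\leq\dim\mod\Gamma$. Set $n:=\dim\mod\Gamma$ and pick $T\in\mod\Gamma$ with $\mod\Gamma=\langle T\rangle_{n+1}$. Given an arbitrary $X\in\mod\Lambda$, first push it over: $X\otimes_{\Lambda}N\in\mod\Gamma=\langle T\rangle_{n+1}$. Next apply the exact functor $-\otimes_{\Gamma}M$ and invoke Lemma \ref{lem-2.4} to conclude
$$(X\otimes_{\Lambda}N)\otimes_{\Gamma}M\in\langle T\otimes_{\Gamma}M\rangle_{n+1}.$$
Finally, use the bimodule isomorphism $N\otimes_{\Gamma}M\cong\Lambda\oplus Y$ from Definition \ref{def-4.4}(3) together with associativity of the tensor product to rewrite
$$(X\otimes_{\Lambda}N)\otimes_{\Gamma}M\;\cong\;X\otimes_{\Lambda}(N\otimes_{\Gamma}M)\;\cong\;X\oplus(X\otimes_{\Lambda}Y),$$
so $X$ is a direct summand of an object in $\langle T\otimes_{\Gamma}M\rangle_{n+1}$. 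Because each $\langle -\rangle_{k}$ is closed under direct summands (it is built inductively via $\diamond$, which is defined through $\add$), we obtain $X\in\langle T\otimes_{\Gamma}M\rangle_{n+1}$.

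Since $X$ was arbitrary, $\mod\Lambda=\langle T\otimes_{\Gamma}M\rangle_{n+1}$, yielding $\dim\mod\Lambda\leq n=\dim\mod\Gamma$. Interchanging the roles of $\Lambda$ and $\Gamma$ (using instead Definition \ref{def-4.4}(2)) gives the reverse inequality, and hence equality. I do not foresee a real obstacle here; the only point requiring slight care is the observation that $\langle T\otimes_{\Gamma}M\rangle_{n+1}$ is closed under direct summands, which is built into the definition via $\add$, and the verification that the functors $-\otimes_{\Lambda}N$ and $-\otimes_{\Gamma}M$ preserve finitely generated modules, which follows from the one-sided finitely generated projectivity assumption in Definition \ref{def-4.4}(1).
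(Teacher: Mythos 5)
Your argument is essentially identical to the paper's: push $X\in\mod\Lambda$ across with $-\otimes_\Lambda N$, pull back with the exact functor $-\otimes_\Gamma M$ and apply Lemma \ref{lem-2.4}, then use $N\otimes_\Gamma M\cong\Lambda\oplus Y$ to recover $X$ as a direct summand, concluding via closure of $\langle-\rangle_{n+1}$ under summands; symmetry gives the reverse inequality. One minor point of attribution: exactness of $-\otimes_\Gamma M$ and $-\otimes_\Lambda N$ follows from $_\Gamma M$ and $_\Lambda N$ being projective (Definition \ref{def-4.4}(1) gives projectivity on all one-sided structures), while the finite generation of $M_\Lambda$ and $N_\Gamma$ is what keeps the functors inside the finitely generated module categories — your phrasing lumps these together, but the substance is correct.
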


\begin{proof}
Let $M$ and $N$ be as in Definition \ref{def-4.4}.
Let $\dim\mod \Gamma=n$. Then there exists $T_{\Gamma}\in \mod \Gamma$ such that $\mod \Gamma=\langle T_{\Gamma} \rangle_{n+1}$.
Let $L_{\Lambda}\in \mod \Lambda$. Then $L\otimes_{\Lambda}N_{\Gamma}\in \mod \Gamma=\langle T_{\Gamma} \rangle_{n+1}$.
Since $_{\Gamma}M$ is projective in $\Gamma$-$\mod$, we have that the
functor $-\otimes_{\Gamma}M: \mod \Gamma \longrightarrow \mod \Lambda$ is exact. By Lemma \ref{lem-2.4}, we have
$(L\otimes_{\Lambda}N)\otimes_{\Gamma}M\in \langle T\otimes_{\Gamma}M_{\Lambda} \rangle_{n+1}$.
By Definition \ref{def-4.4}(3), there exists a $(\Lambda,\Lambda)$-bimodule $Y$ such that
\begin{align*}
L\oplus (L\otimes_{\Lambda}Y)&\cong( L\otimes_{\Lambda}\Lambda)\oplus (L\otimes_{\Lambda}Y)\\
&\cong L\otimes_{\Lambda}(\Lambda\oplus Y)\\
&\cong L\otimes_{\Lambda}(N\otimes_{\Gamma}M)\\
&\cong (L\otimes_{\Lambda}N)\otimes_{\Gamma}M\\
&\in \langle T\otimes_{\Gamma}M_{\Lambda} \rangle_{n+1},
\end{align*}
and so $L_{\Lambda}\in  \langle T\otimes_{\Gamma}M_{\Lambda} \rangle_{n+1}$. It follows that $\mod \Lambda=\langle T\otimes_{\Gamma}M_{\Lambda} \rangle_{n+1}$
and $\dim \mod \Lambda\leq n=\dim \mod \Gamma$. Symmetrically, we have $\dim \mod \Gamma \leq \dim \mod \Lambda$.
\end{proof}

As a consequence of Theorem \ref{thm-4.5}, we have the following

\begin{corollary}\label{cor-4.6}
Let $\Lambda,\Gamma$ and $\Delta$ be finite dimensional algebras over a field $k$.
If $\Lambda$ is separably equivalent to $\Gamma$, then
$\dim \mod \Lambda\otimes_{k}\Delta=\dim \mod \Gamma\otimes_{k}\Delta$.
\end{corollary}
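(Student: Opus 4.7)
The plan is to reduce to Theorem \ref{thm-4.5} by showing that separable equivalence is preserved under tensoring with a fixed finite-dimensional $k$-algebra $\Delta$; that is, if $\Lambda$ and $\Gamma$ are separably equivalent via bimodules ${}_\Gamma M_\Lambda$ and ${}_\Lambda N_\Gamma$ as in Definition \ref{def-4.4}, then $\Lambda \otimes_k \Delta$ and $\Gamma \otimes_k \Delta$ are separably equivalent via $M' := M \otimes_k \Delta$ and $N' := N \otimes_k \Delta$. Here the original $\Gamma$- and $\Lambda$-actions act on the first tensor factor, while $\Delta$ acts on both sides by multiplication on the second factor; these actions commute because $\Delta$ touches only itself, so $M'$ and $N'$ are genuine bimodules over the enveloping algebras.

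The three conditions of Definition \ref{def-4.4} then have to be verified. One-sided projectivity and finite generation of $M'$ and $N'$ follow by applying $- \otimes_k \Delta$ to splittings such as $M \oplus P \cong \Lambda^n$, using that tensoring over a field is exact and carries $\Lambda^n$ to $(\Lambda \otimes_k \Delta)^n$. For the bimodule decompositions, the crux is the natural bimodule isomorphism
$$(M \otimes_k \Delta) \otimes_{\Lambda \otimes_k \Delta} (N \otimes_k \Delta) \;\cong\; (M \otimes_\Lambda N) \otimes_k \Delta,$$
given on simple tensors by $(m \otimes \delta) \otimes (n \otimes \delta') \mapsto (m \otimes_\Lambda n) \otimes \delta\delta'$, with proposed inverse $(m \otimes_\Lambda n) \otimes \delta \mapsto (m \otimes 1) \otimes (n \otimes \delta)$. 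Tensoring $M \otimes_\Lambda N \cong \Gamma \oplus X$ with $\Delta$ over $k$ then yields
$$M' \otimes_{\Lambda \otimes_k \Delta} N' \;\cong\; (\Gamma \otimes_k \Delta) \oplus (X \otimes_k \Delta),$$
and symmetrically $N' \otimes_{\Gamma \otimes_k \Delta} M' \cong (\Lambda \otimes_k \Delta) \oplus (Y \otimes_k \Delta)$. With these in hand, Theorem \ref{thm-4.5} applied to the separably equivalent pair $(\Lambda \otimes_k \Delta,\, \Gamma \otimes_k \Delta)$ delivers the desired equality.

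The only real obstacle is the displayed isomorphism: one must check that the proposed inverse is well defined on $M \otimes_\Lambda N$, i.e., respects the $\Lambda$-balancing $m\lambda \otimes_\Lambda n = m \otimes_\Lambda \lambda n$, and that it is simultaneously bilinear for the two commuting one-sided $(\Gamma \otimes_k \Delta)$- and $(\Lambda \otimes_k \Delta)$-actions. Both checks are direct consequences of the way the $\Lambda \otimes_k \Delta$-action on $M \otimes_k \Delta$ couples the two tensor factors (an element $\lambda \otimes \delta$ acts as $m\lambda \otimes \delta$ on the first and third slot together), and no representation-theoretic input beyond the base-field hypothesis on $k$ is needed.
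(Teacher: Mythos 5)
Your reduction is exactly the one the paper uses: show that separable equivalence of $\Lambda$ and $\Gamma$ is inherited by $\Lambda\otimes_k\Delta$ and $\Gamma\otimes_k\Delta$, then invoke Theorem \ref{thm-4.5}. The only difference is that the paper dispatches the inheritance step by citing \cite[p.227, Proposition]{PS17S}, whereas you verify it by hand via the bimodules $M\otimes_k\Delta$, $N\otimes_k\Delta$ and the canonical isomorphism $(M\otimes_k\Delta)\otimes_{\Lambda\otimes_k\Delta}(N\otimes_k\Delta)\cong(M\otimes_\Lambda N)\otimes_k\Delta$; your verification is correct and simply unpacks the cited result.
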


\begin{proof}
If $\Lambda$ is separably equivalent to $\Gamma$, then $\Lambda\otimes_{k}\Delta$ is separably equivalent to $\Gamma\otimes_{k}\Delta$
by {\rm \cite[p.227, Proposition]{PS17S}}. The assertion follows from Theorem \ref{thm-4.5}.
\end{proof}



\section{Recollements}

We recall the notion of recollements of abelian categories.

\begin{definition}{\rm (\cite{VFTP04C})\label{def-5.1}
A {\bf recollement}, denoted by ($\mathcal{A},\mathcal{B},\mathcal{C}$), of abelian categories is a diagram
$$\xymatrix{\mathcal{A}\ar[rr]!R|{i_{*}}&&\ar@<-2ex>[ll]!R|{i^{*}}\ar@<2ex>[ll]!R|{i^{!}}\mathcal{B}
\ar[rr]!L|{j^{*}}&&\ar@<-2ex>[ll]!L|{j_{!}}\ar@<2ex>[ll]!L|{j_{*}}\mathcal{C}}$$
of abelian categories and additive functors such that
\begin{enumerate}
\item[(1)] ($i^{*},i_{*}$), ($i_{*},i^{!}$), ($j_{!},j^{*}$) and ($j^{*},j_{*}$) are adjoint pairs;
\item[(2)] $i_{*}$, $j_{!}$ and $j_{*}$ are fully faithful;
\item[(3)] $\Ima i_{*}=\Ker j^{*}$.
\end{enumerate}}
\end{definition}

We list some properties of recollements of abelian categories (see \cite{VFTP04C,PC14H,PCVJ14R}), which will be useful later.

\begin{lemma}\label{lem-5.2}
Let ($\mathcal{A},\mathcal{B},\mathcal{C}$) be a recollement of abelian categories. Then we have
\begin{enumerate}
\item[(1)] $i^{*}j_{!}=0=i^{!}j_{*}$;
\item[(2)] the functors $i_{*}$ and $j^{*}$ are exact, $i^{!}$ and  $j_{*}$ are left exact, and $i^{*}$ and $j_{!}$ are right exact;
\item[(3)] the functors $i^{*}$, $i^{!}$ and $j^{*}$ are dense;
\item[(4)] all the natural transformations $\xymatrix@C=15pt{i^{*}i_{*}\ar[r]&1_{\mathcal{A}},}$
$\xymatrix@C=15pt{1_{\mathcal{A}}\ar[r]&i^{!}i_{*},}$
$\xymatrix@C=15pt{1_{\mathcal{C}}\ar[r]&j^{*}j_{!}}$
and $\xymatrix@C=15pt{j^{*}j_{*}\ar[r]&1_{\mathcal{C}}}$ are natural isomorphisms;
\item[(5)] for any object $B\in \mathcal{B}$,
\begin{itemize}
\item[(a)] if $i^{*}$ is exact, there is an exact sequence
$$\xymatrix@C=15pt{0\ar[r]&j_{!}j^{*}(B)\ar[r]^-{\epsilon_{B}}&
B\ar[r]&i_{*}i^{*}(B)\ar[r]&0}$$
\item[(b)] if $i^{!}$ is exact, there is an exact sequence
$$\xymatrix@C=15pt{0\ar[r]&i_{*}i^{!}(B)\ar[r]&B\ar[r]^-{\eta_{B}}&
j_{*}j^{*}(B)\ar[r]&0}.$$
\end{itemize}
\end{enumerate}
\end{lemma}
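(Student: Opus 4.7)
The lemma packages five standard categorical facts about a recollement; each is essentially formal, flowing from the four adjunctions together with the identification $\Im i_{*}=\Ker j^{*}$. My plan is to dispose of (2), (1), (4), (3) in short order, and then concentrate on (5), which is the only part with genuine content.

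For (2), a left adjoint is right exact and a right adjoint is left exact. So $i_{*}$, having both $i^{*}$ on the left and $i^{!}$ on the right, is exact; $j^{*}$, sandwiched between $j_{!}$ and $j_{*}$, is exact; the remaining four assertions follow immediately. For (1), I would apply the Yoneda lemma to
$$\Hom_{\mathcal{A}}(i^{*}j_{!}C,A)\cong\Hom_{\mathcal{B}}(j_{!}C,i_{*}A)\cong\Hom_{\mathcal{C}}(C,j^{*}i_{*}A)=0,$$
using $i_{*}A\in\Im i_{*}=\Ker j^{*}$, to obtain $i^{*}j_{!}=0$; the dual chain of adjunctions yields $i^{!}j_{*}=0$.

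Part (4) will come from the standard criterion that in an adjunction $L\dashv R$, the right adjoint $R$ is fully faithful iff the counit $LR\to 1$ is an isomorphism and $L$ is fully faithful iff the unit $1\to RL$ is an isomorphism. Applying this to the four adjunctions, together with the full faithfulness of $i_{*},j_{!},j_{*}$, yields all four natural isomorphisms asserted. Part (3) follows immediately: $A\cong i^{*}i_{*}A\cong i^{!}i_{*}A$ and $C\cong j^{*}j_{!}C$ witness density of $i^{*}$, $i^{!}$ and $j^{*}$.

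For (5)(a), assume $i^{*}$ is exact, let $\epsilon_{B}\colon j_{!}j^{*}B\to B$ be the counit of $(j_{!},j^{*})$, and set $K:=\ker\epsilon_{B}$, $Q:=\coker\epsilon_{B}$. Applying the exact functor $j^{*}$ to
$$0\to K\to j_{!}j^{*}B\to B\to Q\to 0,$$
and using that $j^{*}\epsilon_{B}$ is an isomorphism (by the triangle identity together with $1_{\mathcal{C}}\cong j^{*}j_{!}$ from (4)), gives $j^{*}K=0=j^{*}Q$. Hence $K,Q\in\Ker j^{*}=\Im i_{*}$, so $K=i_{*}K'$ and $Q=i_{*}Q'$. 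Now apply the assumed exact functor $i^{*}$; by $i^{*}j_{!}=0$ from (1) together with $i^{*}i_{*}\cong 1_{\mathcal{A}}$ from (4), the sequence collapses to $0\to K'\to 0\to i^{*}B\to Q'\to 0$, forcing $K'=0$ and $Q'\cong i^{*}B$; hence $K=0$ and $Q\cong i_{*}i^{*}B$. A short naturality argument identifies the resulting quotient map $B\to Q$ with the unit of $(i^{*},i_{*})$. Part (5)(b) is the exact dual, starting from the unit $\eta_{B}\colon B\to j_{*}j^{*}B$ and using exactness of $i^{!}$. The hard part throughout is nothing but this (5): exactness of $i^{*}$ (resp.\ $i^{!}$) is precisely what promotes the four-term sequence into a short exact one, and the identification of the third (resp.\ first) term with $i_{*}i^{*}B$ (resp.\ $i_{*}i^{!}B$) via the canonical unit/counit is a careful but routine diagram chase.
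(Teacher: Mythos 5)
The paper does not prove Lemma \ref{lem-5.2} at all; it cites it as a collection of known facts from the references on recollements of abelian categories (Franjou--Pirashvili, Psaroudakis, Psaroudakis--Vit\'oria). So there is no ``paper's own proof'' to compare against. Your argument is a correct and complete proof, and it follows the standard route found in those sources: (2) from the position of each functor in the chain of adjunctions, (1) from the adjunction isomorphisms plus $\Im i_{*}=\Ker j^{*}$ and Yoneda, (4) from the characterization of fully faithful adjoints via unit/counit isomorphisms, and (3) as an immediate consequence of (4).

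For (5)(a), your argument is the right one. Two small points worth flagging. First, the fact that $j^{*}\epsilon_{B}$ is an isomorphism deserves to be spelled out: by the triangle identity $j^{*}\epsilon_{B}\circ\eta_{j^{*}B}=1_{j^{*}B}$, and by (4) the unit $\eta_{j^{*}B}$ is an isomorphism, whence $j^{*}\epsilon_{B}=(\eta_{j^{*}B})^{-1}$. Second, note that the lemma as stated only asserts exactness of $0\to j_{!}j^{*}(B)\to B\to i_{*}i^{*}(B)\to 0$ and does not claim the second map is the unit of $(i^{*},i_{*})$; so your closing ``short naturality argument'' identifying $B\to Q$ with the unit is an optional strengthening, not required for the lemma, though it is a pleasant refinement. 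Also, when you pass from $K'=0$ in $\mathcal{A}$ to $K=0$ in $\mathcal{B}$, the justification is simply that $K=i_{*}(K')=i_{*}(0)=0$ because $i_{*}$ is additive; faithfulness is not even needed at that step. With those clarifications, the proof stands as a clean self-contained verification of what the paper invokes without proof.
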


\begin{lemma}\label{lem-5.3}
Let $(\mathcal{A},\mathcal{B},\mathcal{C})$ be a recollement of abelian categories. Then we have
\begin{itemize}
\item[(1)] If $i^{*}$ is exact, then $j_{!}$ is exact;
\item[(2)] If $i^{!}$ ie exact, then $j_{*}$ is exact.
\end{itemize}
\end{lemma}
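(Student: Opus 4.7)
The two statements are formally dual, so the plan is to prove (1) in detail and then indicate that (2) follows by the symmetric argument using the dual sequence in Lemma \ref{lem-5.2}(5)(b), the identity $i^{!}j_{*}=0$ from Lemma \ref{lem-5.2}(1), and the fact that $j_{*}$ is always left exact as a right adjoint. The basic strategy for (1) is: $j_{!}$ is already right exact as a left adjoint, so it suffices to show that $j_{!}$ preserves monomorphisms; we will identify the possible kernel $K$ of $j_{!}$ applied to a mono, show that $j^{*}(K)=0$, and then kill $K$ by applying $i^{*}$.

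The key steps, in order, are as follows. Start with an arbitrary short exact sequence $0\to X\to Y\to Z\to 0$ in $\mathcal{C}$. Since $j_{!}$ is right exact, we obtain a four-term exact sequence
\begin{equation*}
0\longrightarrow K\longrightarrow j_{!}(X)\stackrel{\alpha}{\longrightarrow} j_{!}(Y)\longrightarrow j_{!}(Z)\longrightarrow 0,
\end{equation*}
where $K:=\Ker\alpha$ is what we must kill. Next, split this into two short exact sequences and apply the exact functor $j^{*}$ (Lemma \ref{lem-5.2}(2)); using $j^{*}j_{!}\cong 1_{\mathcal{C}}$ (Lemma \ref{lem-5.2}(4)) and comparing with the original sequence in $\mathcal{C}$, one concludes that $j^{*}(K)=0$. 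By Definition \ref{def-5.1}(3), this places $K$ in $\Ima i_{*}$, so $K\cong i_{*}(A)$ for some $A\in\mathcal{A}$, and by Lemma \ref{lem-5.2}(4) we have $i^{*}(K)=i^{*}i_{*}(A)\cong A$. Finally, apply the hypothesis: since $i^{*}$ is exact, the inclusion $K\hookrightarrow j_{!}(X)$ yields a monomorphism $i^{*}(K)\hookrightarrow i^{*}j_{!}(X)$, and by Lemma \ref{lem-5.2}(1) the target is zero. Therefore $A\cong i^{*}(K)=0$, hence $K=0$, and $\alpha$ is a monomorphism as desired. The dual argument for (2) takes the cokernel $Q$ of $j_{*}(Y)\to j_{*}(Z)$, identifies $Q\cong i_{*}(A)$ via the exactness of $j^{*}$, and uses exactness of $i^{!}$ together with $i^{!}j_{*}=0$ to conclude $A=0$.

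The main obstacle is really just bookkeeping: one must be careful with the four-term sequence in the first step and break it correctly into two short exact sequences before applying $j^{*}$, to be sure that $j^{*}(K)=0$ actually drops out of the comparison rather than being merely a subobject of $X$. Once that step is clean, the rest is a direct application of the adjunction identities in Lemma \ref{lem-5.2} plus the hypothesized exactness of $i^{*}$ (respectively $i^{!}$); no further input is needed.
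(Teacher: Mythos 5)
Your proposal is correct and follows essentially the same route as the paper's proof: take the four-term exact sequence coming from right exactness of $j_{!}$, apply the exact functor $j^{*}$ together with $j^{*}j_{!}\cong 1_{\mathcal{C}}$ to see the kernel $K$ satisfies $j^{*}(K)=0$, identify $K$ with an object in $\Ima i_{*}=\Ker j^{*}$, and then use exactness of $i^{*}$ plus $i^{*}j_{!}=0$ to force $K=0$. The only cosmetic difference is that the paper applies $i^{*}$ to the whole four-term sequence to get $i^{*}(K)=0$, whereas you apply $i^{*}$ to the inclusion $K\hookrightarrow j_{!}(X)$; both give the same conclusion.
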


\begin{proof}
(1) Let
$$\xymatrix@C=15pt{0\ar[r]&X\ar[r]&Y\ar[r]&Z\ar[r]&0}$$ be an exact sequence in $\mathcal{C}$.
Since $j_{!}$ is right exact by Lemma \ref{lem-5.2}(2), we get an exact sequence
\begin{align}\label{2.1}
\xymatrix@C=15pt{0\ar[r]&C\ar[r]&j_{!}(X)\ar[r]&j_{!}(Y)\ar[r]&j_{!}(Z)\ar[r]&0}
\end{align}
in $\mathcal{B}$. Notice that $j^{*}$ is exact and $j^{*}j_{!}\cong 1_{\mathcal{C}}$ by Lemma \ref{lem-5.2}(2)(4),
so $j^{*}(C)=0$. Since $\Ima i_{*}=\Ker j^{*}$, there exists $C'\in\mathcal{A}$ such that $C\cong i_{*}(C')$.
Since $i^{*}$ is exact and $i^{*}j_{!}=0$ by Lemma \ref{lem-5.2}(2)(1), applying the functor $i^{*}$ to the
exact sequence (\ref{2.1}) yields $i^{*}(C)=0$. It follow that $C'\cong i^{*}i_{*}(C')\cong i^{*}(C)=0$ and $C=0$.
Thus $j_{!}$ is exact.

(2) It is dual to (1).
\end{proof}

Let $F:\mathcal{C}\rightarrow \mathcal{D}$ be a functor of additive categories. Recall from \cite{XCC06A}
that $F$ is called {\bf quasi-dense} if for any $D\in \mathcal{D}$, there exists $C\in \C$ such that $D$
is isomorphic to a direct summand of $F(C)$. Obviously, any dense functor is quasi-dense.

\begin{lemma}\label{lem-5.4}
Let $F:\mathcal{A}\rightarrow \mathcal{B}$ be an exact functor of abelian categories, and let
$\mathcal{A}_{1}$ and $\mathcal{B}_{1}$ be subcategories of $\mathcal{A}$ and $\mathcal{B}$ respectively.
If the restriction functor $F:\mathcal{A}_{1}\rightarrow \mathcal{B}_{1}$
is quasi-dense, then ${\bf size}_{\mathcal{A}}\mathcal{A}_{1} \geq {\bf size}_{\mathcal{B}}\mathcal{B}_{1}$;
in particular, $\dim\mathcal{A}\geq \dim \mathcal{B}$.
\end{lemma}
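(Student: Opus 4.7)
The plan is to transport a witness for $\size_{\A}\A_1$ along $F$ and then bootstrap back to $\B_1$ using quasi-density and closure under direct summands.

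First I would dispose of the trivial boundary values for $\size_{\A}\A_1$. If $\size_{\A}\A_1 = +\infty$ there is nothing to prove, and if $\size_{\A}\A_1 = -\infty$ then $\A_1 = \emptyset$, which forces $\B_1 = \emptyset$ (any $B \in \B_1$ would be a summand of $F(A)$ for some $A \in \A_1$), so the inequality holds. Hence assume $\size_{\A}\A_1 = n$ for some $n \geq 0$, and choose $T \in \A$ with $\A_1 \subseteq \langle T \rangle_{n+1}$.

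Next, I claim that $F(T)$ is a witness for $\size_{\B}\B_1 \leq n$. Take an arbitrary $B \in \B_1$. Since the restriction $F\colon \A_1 \to \B_1$ is quasi-dense, there exists $A \in \A_1$ such that $B$ is a direct summand of $F(A)$. Because $A \in \langle T \rangle_{n+1}$ and $F$ is exact, Lemma \ref{lem-2.4} yields $F(A) \in \langle F(T) \rangle_{n+1}$. By the definition of $\langle - \rangle_{k}$ via the operator $\diamond$, which is built on top of $\add$, the subcategory $\langle F(T) \rangle_{n+1}$ is closed under direct summands, so $B \in \langle F(T) \rangle_{n+1}$. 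Thus $\B_1 \subseteq \langle F(T) \rangle_{n+1}$ and $\size_{\B}\B_1 \leq n = \size_{\A}\A_1$.

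For the final assertion, I would specialize to $\A_1 = \A$ and $\B_1 = \B$; here the hypothesis that $F\colon \A \to \B$ is quasi-dense is exactly what is needed, and since $\dim \A = \size_{\A}\A$ and $\dim \B = \size_{\B}\B$ by the remark following Definition \ref{def-2.1}, the inequality $\dim \A \geq \dim \B$ follows.

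I do not expect any genuine obstacle here; the only subtle point is the closure of $\langle F(T) \rangle_{n+1}$ under direct summands, which is automatic from the presence of $\add$ in the definition and is what converts the quasi-density hypothesis (summand, not equality) into membership in $\langle F(T) \rangle_{n+1}$.
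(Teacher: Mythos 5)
Your proof is correct and follows essentially the same path as the paper's: reduce to the finite case, push a witness $T$ for $\size_{\A}\A_1$ through $F$ via Lemma \ref{lem-2.4}, and use quasi-density together with closure of $\langle F(T)\rangle_{n+1}$ under direct summands to conclude $\B_1\subseteq\langle F(T)\rangle_{n+1}$. The only cosmetic difference is that you treat the degenerate values of $\size$ explicitly, while the paper implicitly assumes it is finite.
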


\begin{proof}
Suppose $\size_{\mathcal{A}}\mathcal{A}_{1}=n$, that is, $\mathcal{A}_{1}\subseteq\langle T\rangle_{n+1}$ for some $T\in \mathcal{A}$.
Let $X\in\mathcal{B}_{1}$. Since $F$ is quasi-dense, we have $X\oplus X_{1}\cong F(Y)$ for some $Y\in\mathcal{A}_{1}$ and $X_{1}\in\mathcal{B}_{1}$.
It follows from Lemma \ref{lem-2.4} that $X\oplus X_{1}\in F(\mathcal{A}_{1})\subseteq  F(\langle T\rangle_{n+1})\subseteq\langle F(T)\rangle_{n+1}$.
So $X\in\langle F(T)\rangle_{n+1}$ and $\mathcal{B}_{1}\subseteq\langle F(T)\rangle_{n+1}$, which implies
$\size_{\mathcal{B}}\mathcal{B}_{1}\leq n$.
\end{proof}

Let $\La$ be an artin algebra and $e$ an idempotent of $\La$. Then $(\mod \La/e\La e,\mod \La,\mod e\La e)$ is a recollement
by \cite[Example 2.7]{PC14H}. So $\dim \mod \La\geq \dim \mod e\La e$ by Lemma \ref{lem-5.4}.

\begin{theorem}\label{thm-5.5}
Let $(\mathcal{A},\mathcal{B},\mathcal{C})$ be a recollement of abelian categories. If either $i^{!}$ or $i^{*}$
is exact, then
$$\max\{\dim \mathcal{A},\dim \mathcal{C}\} \leq \dim \mathcal{B}\leq \dim \mathcal{A} +\dim \mathcal{C}+1.$$
\end{theorem}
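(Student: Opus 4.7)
The plan is to handle the two inequalities separately, using the exactness hypotheses to reduce everything to the elementary tools already established (Lemma 2.4, Corollary 2.3(1) and the structural exact sequences of a recollement).

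For the lower bound $\max\{\dim\mathcal{A},\dim\mathcal{C}\}\leq\dim\mathcal{B}$, I would invoke Lemma 5.4. The functor $j^{*}:\mathcal{B}\to\mathcal{C}$ is always exact and dense by Lemma 5.2(2)(3), so Lemma 5.4 applied with the whole categories yields $\dim\mathcal{B}\geq\dim\mathcal{C}$ with no hypothesis at all. On the $\mathcal{A}$-side, Lemma 5.2(3) gives that both $i^{*}$ and $i^{!}$ are dense, so under the assumption that at least one of them is exact we again have an exact dense functor $\mathcal{B}\to\mathcal{A}$; Lemma 5.4 then delivers $\dim\mathcal{B}\geq\dim\mathcal{A}$.

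For the upper bound $\dim\mathcal{B}\leq\dim\mathcal{A}+\dim\mathcal{C}+1$, I would argue by symmetry and treat the case when $i^{*}$ is exact (the case of $i^{!}$ exact being entirely dual, using Lemma 5.2(5)(b) and Lemma 5.3(2) instead of (a) and (1)). Set $m=\dim\mathcal{A}$ and $n=\dim\mathcal{C}$, and choose $T_{\mathcal{A}}\in\mathcal{A}$, $T_{\mathcal{C}}\in\mathcal{C}$ with $\mathcal{A}=\langle T_{\mathcal{A}}\rangle_{m+1}$ and $\mathcal{C}=\langle T_{\mathcal{C}}\rangle_{n+1}$. By Lemma 5.3(1), $j_{!}$ is exact, and by Lemma 5.2(2), $i_{*}$ is always exact. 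For any $B\in\mathcal{B}$, Lemma 5.2(5)(a) provides the exact sequence
$$0\longrightarrow j_{!}j^{*}(B)\longrightarrow B\longrightarrow i_{*}i^{*}(B)\longrightarrow 0.$$

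Since $i^{*}(B)\in\langle T_{\mathcal{A}}\rangle_{m+1}$ and $j^{*}(B)\in\langle T_{\mathcal{C}}\rangle_{n+1}$, applying Lemma 2.4 to the exact functors $i_{*}$ and $j_{!}$ gives $i_{*}i^{*}(B)\in\langle i_{*}(T_{\mathcal{A}})\rangle_{m+1}$ and $j_{!}j^{*}(B)\in\langle j_{!}(T_{\mathcal{C}})\rangle_{n+1}$. The short exact sequence above therefore places $B$ in $\langle j_{!}(T_{\mathcal{C}})\rangle_{n+1}\diamond\langle i_{*}(T_{\mathcal{A}})\rangle_{m+1}$, which by Corollary 2.3(1) is contained in $\langle i_{*}(T_{\mathcal{A}})\oplus j_{!}(T_{\mathcal{C}})\rangle_{m+n+2}$. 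Hence $\mathcal{B}=\langle i_{*}(T_{\mathcal{A}})\oplus j_{!}(T_{\mathcal{C}})\rangle_{m+n+2}$ and $\dim\mathcal{B}\leq m+n+1$, as required.

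The main conceptual step, and the only nontrivial one, is recognising that the exactness of $i^{*}$ (resp.\ $i^{!}$) propagates to $j_{!}$ (resp.\ $j_{*}$) via Lemma 5.3; without this, the canonical short exact sequence of Lemma 5.2(5) would not interact well with the $\diamond$-operation, and Lemma 2.4 could not be applied to both end terms simultaneously. Everything else is bookkeeping with the operator $\diamond$ and the elementary properties of $\langle-\rangle_{k}$ collected in Section 2.
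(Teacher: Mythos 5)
Your proof is correct and takes essentially the same approach as the paper: the lower bound via Lemma 5.4 applied to the exact dense functors $j^{*}$ and (whichever of $i^{!}$, $i^{*}$ is exact), and the upper bound by pushing the canonical short exact sequence of Lemma 5.2(5) through the exact functors $i_{*}$ and $j_{!}$ (resp.\ $j_{*}$) with Lemma 2.4 and then applying Corollary 2.3(1). The only cosmetic difference is that the paper writes out the $i^{!}$-exact case and leaves $i^{*}$-exact as ``similar,'' while you write out $i^{*}$-exact and leave $i^{!}$-exact as ``dual.''
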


\begin{proof}
Let $i^{!}$ be exact. Since $i^{!}$ and $j^{*}$ are exact and dense Lemma \ref{lem-5.2}(2)(3),
it follows from Lemma \ref{lem-5.4} that $\max\{\dim A,\dim C\} \leq \dim B$.

Let $\dim \mathcal{A}=n$ and $\dim \mathcal{C}=m$. Then there exist $X\in\mathcal{A}$ and $Y\in\mathcal{C}$
such that $\mathcal{A}=\langle X\rangle_{n+1}$ and $\mathcal{C}=\langle Y\rangle_{m+1}$. Let $M\in\mathcal{B}$.
Since $i^{!}$ is exact by assumption, we have an exact sequence
$$\xymatrix@C=15pt{0\ar[r]&i_{*}i^{!}(M)\ar[r]&M\ar[r]&
j_{*}j^{*}(M)\ar[r]&0}$$
in $\mathcal{B}$. Note that $i_{*}$ and $j_{*}$ are exact by Lemmas \ref{lem-5.2}(2) and \ref{lem-5.3}(2).
Since $i^{!}(M)\in \mathcal{A}=\langle X\rangle_{n+1}$ and $j^{*}(M)\in\mathcal{C}=\langle Y\rangle_{m+1}$,
we have $i_{*}i^{!}(M)\in\langle i_{*}(X)\rangle_{n+1}$ and $j_{*}j^{*}(M)\in\langle j_{*}(Y)\rangle_{m+1}$
by Lemma \ref{lem-2.4}. Thus $M\in \langle i_{*}X\rangle_{n+1}\diamond \langle j_{*}Y\rangle_{m+1} \subseteq
\langle i_{*}X\oplus j_{*}Y\rangle_{n+m+2}$ by Corollary \ref{cor-2.3}(1), and therefore $\dim \mathcal{B}\leq n+m+1$.

For the case that $i^{*}$ is exact, the argument is similar.
\end{proof}

Let $\La,\La',\La''$ be artin algebras and $(\mod \La',\mod \La,\mod \La'')$ be a recollement.
If $\dim\mod\La=0$, then  $\dim\mod\La'=0=\dim\mod\La''$; that is, $\Lambda$ is of finite representation type implies that
so are $\Lambda'$ and $\Lambda''$ (\cite{PC14H}). Conversely,
if $\dim\mod\La'=0=\dim\mod\La''$, then $\dim\mod\La=0$ does not hold true in general. For example,
let $\Lambda'$ be the finite dimensional algebra given by the quiver $\cdot$ (a unique vertex without arrows) and $\Lambda''$
the finite dimensional algebra given by the quiver
$$\xymatrix@C=15pt{&4\ar[ld]_{\alpha}\\
2&&\ar[ll]^{\delta}\ar[lu]_{\lambda}3}$$
with relation $\lambda\alpha=0$. Then both $\Lambda'$ and $\Lambda''$ are of finite representation type,
and so $\dim\mod\La'=0=\dim\mod\La''$ by \cite[Example 1.6(i)]{BA08S} (see Corollary \ref{cor-3.8}(1)).
Define the triangular matrix algebra $\Lambda:={\Lambda'\ M\choose 0\ \ \Lambda''}$,
where $M\cong \Lambda'\oplus \Lambda'$, the right $\Lambda''$-module structure on $M$ is induced by the unique algebra
surjective homomorphism $\xymatrix@C=15pt{\Lambda''\ar[r]^{\phi}&\Lambda'}$ satisfying $\phi(e_{2})=e_{1}$, $\phi(e_{3})=0$
and $\phi(e_{4})=0$. Then $\Lambda$ is the finite dimensional algebra given by the quiver
$$\xymatrix@C=15pt{&&4\ar[ld]_{\alpha}\\
1&\ar@<-1ex>[l]_{\beta}\ar@<1ex>[l]^{\gamma}2&&\ar[ll]^{\delta}
\ar[lu]_{\lambda}3}$$
with relations $\delta\gamma=\delta\beta=\lambda\alpha=\alpha\beta=\alpha\gamma=0$.
By {\rm \cite[Example 2.12]{PC14H}}, we have that
$$\xymatrix{\mod \Lambda'\ar[rr]!R|-{i_{*}}&&\ar@<-2ex>[ll]!R|-{i^{*}}
\ar@<2ex>[ll]!R|-{i^{!}}\mod \Lambda
\ar[rr]!L|-{j^{*}}&&\ar@<-2ex>[ll]!L|-{j_{!}}\ar@<2ex>[ll]!L|-{j_{*}}
\mod \Lambda''}$$
is a recollement, where
\begin{align*}
&i^{*}({X\choose Y}_{f})=\Coker f, & i_{*}(X)={X\choose 0},&&i^{!}({X\choose Y}_{f})=X,\\
&j_{!}(Y)={Y\choose Y}_{1}, & j^{*}({X\choose Y}_{f})=Y, &&j_{*}(Y)={0\choose Y}.
\end{align*}
Because $i^{!}$ is exact by \cite[Lemma 3.2(a)]{LM17G},
$\dim \mod\La\leq 1$ by Theorem \ref{thm-5.5}. Notice that $\La$ is of infinite representation type and
$\repdim \La=3$ by \cite[Example 5.9]{AICFU14O}, so $\dim \mod \La=1$ by Corollary \ref{cor-3.8}(2).

\vspace{0.6cm}

{\bf Acknowledgements.}
This work was partially supported by NSFC (No. 11571164), a Project Funded
by the Priority Academic Program Development of Jiangsu Higher Education Institutions, Postgraduate Research and
Practice Innovation Program of Jiangsu Province (Grant No. KYCX17\_0019).
The authors thank the referee for very useful and detailed suggestions.


%


\end{document}